\providecommand{\scr}{\mathcal}
\newtheorem{prop}{Proposition}[subsection]
\newtheorem{coro}[prop]{Corollary}
\newtheorem{lemm}[prop]{Lemma}
\newtheorem{lem}[prop]{Lemma}
\newtheorem*{lemm*}{Lemma}
\theoremstyle{definition}
\newtheorem{empt}[prop]{}
\newtheorem{dfn}[prop]{Definition}
\newtheorem{rem}[prop]{Remark} 
\newtheorem{ntn}[prop]{Notation} 
\newtheorem*{rem*}{Remark}
\theoremstyle{thm}
\newtheorem{thm}[prop]{Theorem}
\newtheorem*{thm*}{Theorem}
\newtheorem*{lem*}{Lemma}
\newtheorem*{cor*}{Corollary}
\newtheorem*{prop*}{Proposition}
\theoremstyle{dfn}
\newtheorem*{dfn*}{Definition}
\numberwithin{equation}{prop}
\newcommand{\riso}{ \overset{\sim}{\longrightarrow}\, }
\newcommand{\Spec}{\mathrm{Spec}\,}
\newcommand{\Spf}{\mathrm{Spf}\,}
\newcommand{\gr}{\mathrm{gr}}
\newcommand{\FF}{{\mathcal{F}}}
\newcommand{\E}{{\mathcal{E}}}
\newcommand{\G}{{\mathcal{G}}}
\renewcommand{\H}{{\mathcal{H}}}
\newcommand{\M}{{\mathcal{M}}}
\newcommand{\NN}{{\mathcal{N}}}
\newcommand{\D}{{\mathcal{D}}}
\newcommand{\PP}{{\mathcal{P}}}
\newcommand{\QQ}{{\mathcal{Q}}}
\renewcommand{\O}{{\mathcal{O}}}
\newcommand{\V}{\mathcal{V}}
\newcommand{\Y}{\mathcal{Y}}
\newcommand{\X}{\mathfrak{X}}
\newcommand{\U}{\mathfrak{U}}
\newcommand{\A}{\mathbb{A}}
\renewcommand{\P}{\mathbb{P}}
\newcommand{\F}{\mathbb{F}}
\newcommand{\C}{\mathbb{C}}
\newcommand{\DD}{\mathbb{D}}
\renewcommand{\L}{\mathbb{L}}
\newcommand{\R}{\mathbb{R}}
\newcommand{\Q}{\mathbb{Q}}
\newcommand{\Z}{\mathbb{Z}}
\newcommand{\N}{\mathbb{N}}
\newcommand{\hdag}{  \phantom{}{^{\dag} }    }
\DeclareMathOperator{\sym}{Sym}
\DeclareMathOperator{\supp}{Supp}
\DeclareMathOperator{\red}{red}
\DeclareMathOperator{\coker}{coker}
\begin{document}

\title{Betti number estimates in $p$-adic cohomology}
\author{Daniel Caro} 

\date{}

\maketitle

\begin{abstract}
In the framework of Berthelot's theory of arithmetic $\D$-modules, 
we prove the $p$-adic analogue of Betti number estimates and we give some standard
applications. 

\end{abstract}

\tableofcontents

\bigskip

\section*{Introduction}
Let $k$ be a perfect field of characteristic $p$ and $l$ be a prime number different from $p$.
When $k$ is algebraically closed, in the framework of Grothendieck's  $l$-adic etale cohomology of $k$-varieties,
Bernstein, Beilinson and Deligne in their famous paper on perverse sheaves,
more precisely in \cite[4.5.1]{BBD} (or see the $p$-adic translation here in \ref{BBD4.5.1}), established some Betti number estimates. 
The goal of this paper is to get the same estimates in the context of Berthelot's arithmetic $\D$-modules. 
We recall that this theory of Berthelot gives a $p$-adic cohomology stable under six operations (see \cite{caro-Tsuzuki})
and admitting a theory of weights (see \cite{Abe-Caro-weights}) analogous to that of Deligne in the $l$-adic side (see \cite{deligne-weil-II}). 
This allows us to consider Berthelot's theory as a right $p$-adic analogue
of Grothendieck's $l$-adic etale cohomology. 
By trying to translate the proof of Betti number estimates in \cite[4.5.1]{BBD} in the framework of arithmetic $\D$-modules, 
two specific problems appear.
The first one is that we do not have a notion of local acyclicity in the theory of arithmetic $\D$-modules. 
We replace the use of this notion by another one that we might call 
``relative generic $\O$-coherence''.
The goal of the first chapter is to prove this property. 
The proof of this relative generic $\O$-coherence 
uses the precise description of the characteristic variety of a unipotent overconvergent $F$-isocrystal 
(see \cite{Caro-Lagrangianity}). 
Berthelot's characteristic variety of a holonomic arithmetic $\D$-module endowed with a Frobenius structure. 
The second emerging problem when we follow the original $l$-adic proof of Betti number estimates is that we still do not have vanishing cycles theory as nice as in the $l$-adic framework 
(so far, following \cite{Abe-Caro-BEq} we only have a $p$-adic analogue of Beilinson's unipotent nearby cycles and vanishing cycles). 
Here, we replace successfully in the original proof on Betti number estimates 
the use of vanishing cycles by that of some Fourier transform and of Abe-Marmora formula (\cite[4.1.6.(i)]{AbeMarmora})
relating the irregularity of an isocrystal with the rank of its Fourier transform. 
We conclude this paper by the remark that these Betti number estimates 
allow us to state that the results of \cite[chapters 4 and 5]{BBD} are still valid (except 
\cite[5.4.7--8]{BBD} because the translation is not clear so far).

\subsection*{Acknowledgment}
I would like to thank Ambrus P\'al for his invitation at the Imperial College of London.
During this visit, one problem that we studied was to get 
a $p$-adic analogue of Gabber's purity theorem published in \cite{BBD}. We noticed that
the $p$-adic analogue of this proof was not obvious since contrary to the $l$-adic case we did not have $p$-adic vanishing cycles.
I would like to thank him for the motivation he inspired to overcome this problem.

\subsection*{Convention, notation of the paper}
Let $\V$ be a complete discrete valued ring of mixed characteristic $(0,p)$, 
$K$ its field of fractions,  
$k$ its residue field which is supposed to be perfect, $\pi$ be a uniformizer of $\V$. 
Let $F _k \colon k \to k$ be the Frobenius map given by $x \mapsto x ^p$. 
When we deal with Frobenius structures, 
we suppose that there exists  a lifting $\sigma _{0}\colon \V \to \V$ of the Frobenius map $F _k$ that we will fix. 
A $k$-variety is a separated reduced scheme of finite type over $k$.
We say that a $k$-variety $X$ is realizable if there exists an immersion of the form $X \hookrightarrow \PP$, 
where $\PP$ is a proper smooth formal scheme over $\V$.
In this paper, $k$-varieties will always be supposed realizable. 
For any $k$-variety $X$, we denote by $ p _{X} \colon X \to \Spec k $ the canonical morphism.
We will denote formal schemes by curly or gothic letters and the corresponding straight roman letter will
mean the special fiber (e.g. if $\X$ is a formal scheme over $\V$, then $X$ is the $k$-variety equal to the special fiber of $\X$).
The  underlying topological space of a $k$-variety $X$ is denoted by $|X|$.
When $M$ is a $\V$-module, we denote by 
$\widehat{M}$ its $p$-adic completion and we set
$M _\Q := M \otimes _{\V} K$.
By default, a module will mean a left module. 
Moreover, is $f \colon \PP ' \to \PP$ is a morphism of formal schemes over $\V$, we denote by $\L f ^{*}$ the functor defined by putting 
$\L f ^{*} (\M) =  \O _{\PP', \Q} \otimes ^{\L} _{f ^{-1} \O _{\PP, \Q}} f ^{-1} \M$, for any bounded below complex $\M$ of $\O _{\PP, \Q}$-modules.
When $f $ is flat, we remove $\L$ in the notation. 

If $T \to S$ is a morphism of schemes and $f\colon X \to Y$ is a $S$-morphism, 
then we denote by $f _T \colon X _T \to Y _T$ or simply by 
$f  \colon X _T \to Y _T$ the base change of $f$ by $T \to S$.

Concerning the cohomological operations of the theory of arithmetic $\D$-modules of Berthelot, we follow
the usual notation (for instance, see the beginning of \cite{Abe-Caro-weights}).
More precisely, let $S$ be a noetherian scheme such that $p$ is nilpotent in $\O _S$. 
Let $f \colon X \to Y$  be morphism of quasi-compact smooth $S$-schemes. 
If $f$ is smooth, then the extraordinary pull-back of level $m$ by $f$ has the factorisation
$f ^{! ^{(m)}}\colon 
D ^\mathrm{b} _{\mathrm{coh}} (\D ^{(m)} _{Y/S})
\to 
D ^\mathrm{b} _{\mathrm{coh}} (\D ^{(m)} _{X/S})$
  (see \cite[2.2.4]{Beintro2}).
If $f$ is proper, then 
the pushforward of level $m$ by $f$ has the factorisation
$f _{+ ^{(m)}}\colon 
D ^\mathrm{b} _{\mathrm{coh}} (\D ^{(m)} _{X/S})
\to 
D ^\mathrm{b} _{\mathrm{coh}} (\D ^{(m)} _{Y/S})$
  (see \cite[2.4.4]{Beintro2}).
  When there is no ambiguity with the basis $S$, we remove ``$/S$'' is the notation.

Let $f\colon \PP \to \QQ$ be a morphism of quasi-compact smooth formal $\V$-schemes. 
If $f$ is smooth, then we have the  extraordinary pull-back of level $m$ by $f$ of the form
$f ^{! ^{(m)}}\colon 
D ^\mathrm{b} _{\mathrm{coh}} (\widehat{\D} ^{(m)} _{\QQ})
\to 
D ^\mathrm{b} _{\mathrm{coh}} (\widehat{\D} ^{(m)} _{\PP})$
and 
$f ^{! ^{(m)}}\colon 
D ^\mathrm{b} _{\mathrm{coh}} (\widehat{\D} ^{(m)} _{\QQ,\Q})
\to 
D ^\mathrm{b} _{\mathrm{coh}} (\widehat{\D} ^{(m)} _{\PP,\Q})$
(see \cite[3.4.6]{Beintro2}),
and we have the extraordinary pull-back by $f$ of the form 
$f ^{! }
\colon 
D ^\mathrm{b} _{\mathrm{coh}} (\D ^{\dag} _{\QQ,\Q})
\to 
D ^\mathrm{b} _{\mathrm{coh}} (\D ^{\dag} _{\PP,\Q})$
(see \cite[4.3.4]{Beintro2}).
If $f$ is proper, then we have the push-forward of level $m$ of the form
$f _{+ ^{(m)}}\colon 
D ^\mathrm{b} _{\mathrm{coh}} (\widehat{\D} ^{(m)} _{\PP})
\to 
D ^\mathrm{b} _{\mathrm{coh}} (\widehat{\D} ^{(m)} _{\QQ})$
and
$f  _{+ ^{(m)}}\colon 
D ^\mathrm{b} _{\mathrm{coh}} (\widehat{\D} ^{(m)} _{\PP,\Q})
\to 
D ^\mathrm{b} _{\mathrm{coh}} (\widehat{\D} ^{(m)} _{\QQ,\Q})$
(see \cite[3.5.3]{Beintro2}),
and the push-forward by $f$ of the form 
$f  _{+}
\colon 
D ^\mathrm{b} _{\mathrm{coh}} (\D _{\PP,\Q})
\to 
D ^\mathrm{b} _{\mathrm{coh}} (\D _{\QQ,\Q})$
(see \cite[4.3.8]{Beintro2}).

Let $a \colon X \to Y$ be a morphism of (realizable) $k$-varieties.
By definition, there exist immersions
$\iota \colon X \hookrightarrow \PP$ and 
$\iota ' \colon Y \hookrightarrow \QQ$ where $\PP$ and $\QQ$ are proper smooth formal schemes over $\V$.
Replacing $\PP$ by $\PP \times \QQ$, we can suppose that 
there exist a (proper) smooth morphism of formal $\V$-schemes of the form
$f \colon \PP \to \QQ$ such that $f \circ \iota = \iota ' \circ a$. 
By definition, $D ^{\mathrm{b}} _{\mathrm{ovhol}} (X,\PP/K)$
is the full subcategory of $D ^\mathrm{b} _{\mathrm{ovhol}} (\D ^\dag _{\PP,\Q})$ (the derived category of overholonomic complexes
of $\D ^\dag _{\PP,\Q}$-modules)
of the objects $\E$ 
such that there exists an isomorphism of the form
$\R \underline{\Gamma} ^\dag _X (\E) \riso \E$ (see \cite[1.1.6]{Abe-Caro-weights}). 
Since this category $D ^{\mathrm{b}} _{\mathrm{ovhol}} (X,\PP/K)$
does not depend on the choice of $\iota$,
we simply denote it by $D ^{\mathrm{b}} _{\mathrm{ovhol}} (X/K)$
(see Definition \cite[1.1.5]{Abe-Caro-weights}).
The extraordinary pull-back by $a$ is by definition 
$\R \underline{\Gamma} ^\dag _X \circ f ^!
\colon 
 D ^{\mathrm{b}} _{\mathrm{ovhol}} (Y,\QQ/K)
 \to 
 D ^{\mathrm{b}} _{\mathrm{ovhol}} (X,\PP/K)$, 
 which is simply denoted by 
$ a ^! \colon 
D ^{\mathrm{b}} _{\mathrm{ovhol}} (Y/K)
 \to 
 D ^{\mathrm{b}} _{\mathrm{ovhol}} (X/K)$ (again, we chech that this does not depend on 
 $\iota$, $\iota'$ and $f$).
The pushforward by $a$ is by definition 
$f _+
\colon 
 D ^{\mathrm{b}} _{\mathrm{ovhol}} (X,\PP/K)
 \to 
 D ^{\mathrm{b}} _{\mathrm{ovhol}} (Y,\QQ/K)$, 
 which is simply denoted by 
$ a _+ \colon 
D ^{\mathrm{b}} _{\mathrm{ovhol}} (X/K)
 \to 
 D ^{\mathrm{b}} _{\mathrm{ovhol}} (Y/K)$.
 We have also the dual functor
 $\DD _X := \R \underline{\Gamma} ^\dag _X \circ \DD _{\PP}\colon 
D ^{\mathrm{b}} _{\mathrm{ovhol}} (X/K)
\to 
D ^{\mathrm{b}} _{\mathrm{ovhol}} (X/K)$.
Then we get 
$a _! := \DD _Y \circ a _+ \circ \DD _X$
and 
$a ^+ := \DD _X \circ a ^! \circ \DD _Y$.
There is a canonical t-structure on 
$D ^{\mathrm{b}} _{\mathrm{ovhol}} (X/K)$ defined as follows: 
if $\U$ is an open set of $\PP$ so that $X$ is closed in $\U$
then $D ^{\leq n} _{\mathrm{ovhol}} (X/K)$ 
(resp. $D ^{\geq n} _{\mathrm{ovhol}} (X/K)$) 
is the subcategory of $D ^{\mathrm{b}} _{\mathrm{ovhol}} (X/K)$
of complexes $\E$ such $\E |\U \in D ^{\leq n} _{\mathrm{ovhol}} (\D ^\dag _{\U,\Q})$
(resp. $D ^{\geq n} _{\mathrm{ovhol}} (\D ^\dag _{\U,\Q})$), 
where the t-structure on 
$D ^\mathrm{b} _{\mathrm{ovhol}} (\D ^\dag _{\U,\Q})$ is the obvious one.
The heart of this t-structure is denoted by 
$\mathrm{Ovhol} (X/K)$ 
(see Definition \cite[1.2.6]{Abe-Caro-weights}).

Suppose $X$ smooth. Following \cite[1.2.14]{Abe-Caro-weights}, 
we have a full subcategory $D ^{\mathrm{b}} _{\mathrm{isoc}} (X /K)$
of $D ^{\mathrm{b}} _{\mathrm{ovhol}} (X /K)$ whose cohomological spaces (for the above t-structure)
belong to  $\mathrm{Isoc} ^{\dag\dag} (X /K)$ (the category of overconvergent isocrystals on $X/K$). 
Recall that $\mathrm{Isoc} ^{\dag\dag} (X /K)$ is equivalent to the category of overconvergent isocrysals on $X/K$
denoted by $\mathrm{Isoc} ^{\dag} (X /K)$.

If $j \colon U \hookrightarrow X$ is an open immersion of (realizable) varieties, 
the functor $j ^{!}\colon D ^{\mathrm{b}} _{\mathrm{ovhol}} (X/K) \to 
D ^{\mathrm{b}} _{\mathrm{ovhol}} (U/K)$
(or the functor 
$j ^! \colon \mathrm{Ovhol} (X/K) \to \mathrm{Ovhol} (U/K)$)
will simply be denoted by $| U$ (in other papers, to avoid confusion, it was sometimes denoted 
by $\Vert U$ but, here, there is no such risk since we do not work ``partially'').

\medskip 

Let $s$ be a positive integer and $\sigma = \sigma _0 ^{s}\colon \V \to \V$ the corresponding lifting of the $s$th power of the Frobenius map
$F ^s _k\colon k\to k$.
If  $X$ is a $k$-variety (resp. $\PP$ is a smooth formal $\V$-scheme) 
then we denote by $X ^\sigma$ (resp.  $\PP ^{\sigma}$) the corresponding $k$-scheme of finite type (resp. smooth formal $\V$-scheme) induced by the
base change by $F ^s _k$ (resp. $\sigma$). 
We will denote by $F ^s _{X/k} \colon X \to X ^{\sigma}$
the corresponding relative Frobenius
which is a morphism of $k$-schemes. 
Notice, when $X$ is $k$-smooth, $F ^s _{X/k}$ is a morphism of smooth $k$-varieties.
When it exists (e.g. when $\PP$ is affine), we will denote by 
$F ^s _{\PP/\V} \colon \PP \to \PP ^{\sigma}$ 
a morphism of smooth formal $\V$-schemes 
which is a lifting of $F ^s _{X/k} \colon X \to X ^{\sigma}$.
The functor 
$D ^\mathrm{b} _{\mathrm{coh}} (\D ^\dag _{\PP,\Q})
\to D ^\mathrm{b} _{\mathrm{coh}} (\D ^\dag _{\PP ^\sigma,\Q})$
induced by the isomorphism $\PP ^\sigma \riso \PP$
is denoted by $\E \mapsto \E ^\sigma$.
We have the functor
$(F ^s _{P/k}) ^! \colon D ^\mathrm{b} _{\mathrm{coh}} (\D ^\dag _{\PP ^\sigma,\Q})
\to D ^\mathrm{b} _{\mathrm{coh}} (\D ^\dag _{\PP ,\Q})$.
Recall that when $F ^s _{P/k}$
has a lifting  $F ^s _{\PP/\V}\colon \PP \to \PP ^{\sigma}$ then we have $(F ^s _{P/k}) ^! = (F ^s _{\PP/\V}) ^!$
(in general, even if the lifting $F ^s _{\PP/\V}$ is not unique we can glue these functors: e.g. see \cite[2.1]{Be2}).
Finally, we get the functor 
$F ^* \colon D ^\mathrm{b} _{\mathrm{coh}} (\D ^\dag _{\PP,\Q}) 
\to 
D ^\mathrm{b} _{\mathrm{coh}} (\D ^\dag _{\PP,\Q})$
which is defined for any 
$\E \in D ^\mathrm{b} _{\mathrm{coh}} (\D ^\dag _{\PP,\Q})$
by setting
$F ^* (\E) := 
 (F ^s _{P/k}) ^!(\E ^\sigma)$.
 The derived category of overholonomic $F$-complexes of $\D ^\dag _{\PP,\Q}$-modules,
 denoted by 
 $F\text{-}D ^\mathrm{b} _{\mathrm{ovhol}} (\D ^\dag _{\PP,\Q})$,
 is the category whose objects are the data of 
 an object $\E$ of $D ^\mathrm{b} _{\mathrm{ovhol}} (\D ^\dag _{\PP,\Q})$
 endowed with a Frobenius structure, i.e. 
 an isomorphism $\phi$ of $D ^\mathrm{b} _{\mathrm{ovhol}} (\D ^\dag _{\PP,\Q})$   of the form
$\phi \colon F ^* \E \riso \E$.
We get similarly the category
$F\text{-}D ^{\mathrm{b}} _{\mathrm{ovhol}} (X/K)$ of overholonomic $F$-complexes on $X/K$.
When $X$ is smooth, we define similarly the categories of $F$-objects
$F\text{-}D ^{\mathrm{b}} _{\mathrm{isoc}} (X /K)$ 
and 
$F\text{-}\mathrm{Isoc} ^{\dag\dag} (X /K)$ (see  \cite[1.2.14]{Abe-Caro-weights}).

\section{Relative generic $\O$-coherence}

\subsection{Preliminaries on cotangent spaces}

\begin{ntn}
\label{stab-T_X X}
Let $X$ be a smooth $k$-variety. 
For any quasi-coherent $\O _X$-module $\E$,
we denote by $\sym (\E) $ the symetric algebra of $\E$  
and by $\mathbb{V} (\E): = \Spec (\sym (\E) )$ endowed with its canonical projection
$\mathbb{V} (\E) \to \Spec \sym (\O _X) = X$.
We denote by $\Omega ^{1} _X $ the sheaf of differential form of $X/\Spec (k)$ (we skip $k$ in the notation),
and $\mathcal{T} _X$ the tangent space of $X/\Spec (k)$, i.e. the $\O _X$-dual of $\Omega ^{1} _X $.
We denote by $T ^{*} X:= \mathbb{V} (\mathcal{T} _X)$ the cotangent space of $X$
and $\pi _X \colon T ^{*} X \to X$ the canonical projection.
Recall that from \cite[1.7.9]{EGAII}, there is a canonical bijection between
sections of $\pi _X$ and $\Gamma (X, \Omega ^{1} _X)$.
We denote by $T ^* _X X$ the section corresponding to the zero section of 
$\Gamma (X, \Omega ^{1} _X)$.
If $t _1, \dots, t _d$ are local coordinates of $X$, we get local coordinates
$t _1 , \dots, t _d, \xi _1, \dots, \xi _d$ of $T ^{*} X$, where $\xi _i$ is the element associated with
$\partial _i$, the derivation with respect to $t _i$. 
Is this case, $T ^* _X X= V ( \xi _1, \dots, \xi _d)$ is the closed subvariety of 
$T ^* X$ defined by $\xi _1 =0, \dots, \xi _d =0$.

Let $f \colon X \to Y$ be a morphism of smooth $k$-varieties. 
Using the equality \cite[1.7.11.(iv)]{EGAII}
we get the last one
$X \times _{Y} T ^* Y =
X \times _{Y} \mathbb{V} (\mathcal{T}_Y) 
=\mathbb{V} (f ^*\mathcal{T} _Y)$.
The morphism $f ^* \Omega ^{1} _Y \to \Omega ^{1} _X $ induced by $f$
yields by duality 
$\mathcal{T} _X \to f ^* \mathcal{T} _Y$ and then by functoriality
$\mathbb{V} (f ^*\mathcal{T} _X)
\to \mathbb{V} (\mathcal{T} _Y) = T ^* Y$.
By composition, we get the morphism
denoted by
$\rho _{f}\colon  X \times _{Y} T ^* Y \to T ^{*} X $.
We will write by 
$\varpi _f \colon X \times _{Y} T ^* Y \to T ^*Y$ the base change of $f$ under
$\pi _Y$ (instead of $f _{T ^* _Y}\colon X \times _{Y} T ^* Y \to T ^*Y$ which seems too heavy).

We denote by $\mathscr{T} _f$ the function from the set of subvarieties of $T ^{*} X$ to the set 
of subvarieties of $T ^* Y$ defined by posing, for any subvariety $V$ of $T ^{*} X$, 
$\mathscr{T} _{f} (V) :=  \varpi _{f} ( \rho _{f} ^{-1} (V))$.
If $f$ is an open immersion,
then $\rho _f$ is an isomorphism. In that case,
$\mathscr{T} _{f} := \varpi _f \circ \rho _{f} ^{-1} \colon T ^* X \to T ^* P$ 
is an open immersion and this is compatible with the above definition 
of $\mathscr{T} _{f}$.
The application $\mathscr{T} \colon f \mapsto \mathscr{T} _f$ is transitive (with respect to the composition), 
i.e. we have the equality 
$\mathscr{T} _ g \circ \mathscr{T} _f  = \mathscr{T} _{g\circ f}$ for any $g \colon Y \to Z$
 (e.g. look at the bottom of the diagram \ref{lem-incluT*strat-diag} where $f$ and $u$ are replaced respectively by $g$ and $f$).

 We define the $k$-variety $T ^{*} _{X} Y$ (recall a $k$-variety is a separated reduced scheme of finite type over $k$
from our convention) 
by setting $T ^{*} _{X} Y := \rho _{f} ^{-1} ( T ^* _X X)$. 
When $f$ is an immersion, 
$T ^{*} _{X} Y$ is viewed as a subvariety of $T ^* Y$ via 
$T ^{*} _{X} Y \subset X \times _Y T ^* Y \overset{\varpi _f}{\hookrightarrow} T ^* Y$,
i.e. we will simply denote
$\varpi _f (T ^{*} _{X} Y)$ by $T ^{*} _{X} Y$.

\end{ntn}

\begin{lem}
\label{lem-incluT*strat}
Let $u\colon Z \to X$ and $f\colon X \to P$ 
be two morphisms of smooth $k$-varieties. 
\begin{enumerate}
\item We have the equality
$(Z \times _{X}\rho _{f} )^{-1} ( T ^{*} _Z X) = T ^{*} _{Z} P$.
When $u$ is an immersion, this might be written of the form
$\rho _{f} ^{-1} ( T ^{*} _Z X) = T ^{*} _{Z} P$
or
$\mathscr{T} _{f} (T ^{*} _Z X) =\varpi _f (T ^{*} _Z P)$.
If $u $ and $f$ are immersions,  this might be written
$\mathscr{T} _{f} (T ^{*} _Z X) =T ^{*} _Z P$.
Finally, when $u $ is an immersion and $f$ is an open immersion, 
we might identify $T ^{*} _Z X $ and $T ^{*} _Z P$.
\item When $u$ is an immersion (resp. an open immersion), we have the inclusion
$Z \times _{X} T ^* _X P \subset  T ^{*} _Z P$
(resp. the equality $Z \times _{X} T ^* _X P = T ^{*} _Z P$)
 in $Z \times _{P}T ^{*} P$.

\end{enumerate}

\end{lem}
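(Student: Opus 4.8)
The plan is to reduce all three assertions to the fiberwise description of the morphisms $\rho$ and $\varpi$ together with the chain rule for differentials. Recall that for a morphism $g\colon W\to V$ of smooth varieties, $\rho_g\colon W\times_V T^*V\to T^*W$ is, over each point of $W$, the transpose ${}^{\mathrm{t}}(\mathrm{d}g)$ of the differential; that $\varpi_g\colon W\times_V T^*V\to T^*V$ is the second projection, hence a base change of $g$; and that $T^*_WV=\rho_g^{-1}(T^*_WW)$ is the subbundle of $W\times_V T^*V$ whose fiber over a point of $W$ is $\ker{}^{\mathrm{t}}(\mathrm{d}g)$. Every object occurring in the statement is then a vector subbundle of an ambient bundle over $Z$ (or over $X$), so it suffices to verify the claimed equalities and inclusions after restriction to fibers, where they become assertions of linear algebra; one may even pass first to local \'etale coordinates so that every cotangent space is trivialized and every map is a matrix.

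For (1), identify $Z\times_X(X\times_P T^*P)$ with $Z\times_P T^*P$. Transitivity of the $\rho$-construction --- fiberwise, just $\mathrm{d}(f\circ u)=\mathrm{d}f\circ\mathrm{d}u$, the same computation underlying the transitivity of $\mathscr{T}$ recalled in \ref{stab-T_X X} --- gives $\rho_u\circ(Z\times_X\rho_f)=\rho_{f\circ u}$ as morphisms $Z\times_P T^*P\to T^*Z$. Hence
\[
(Z\times_X\rho_f)^{-1}(T^*_ZX)=(Z\times_X\rho_f)^{-1}\bigl(\rho_u^{-1}(T^*_ZZ)\bigr)=\rho_{f\circ u}^{-1}(T^*_ZZ)=T^*_ZP.
\]

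For (2), since $u$ is an immersion $\varpi_u$ is injective and identifies $T^*_ZX$ with a subvariety of $T^*X$ supported over $Z$; pulling this subvariety back along $\rho_f$ and invoking the chain rule as in (1) gives $\rho_f^{-1}(T^*_ZX)=T^*_ZP$ inside $X\times_P T^*P$, and applying $\varpi_f$ yields $\mathscr{T}_f(T^*_ZX)=\varpi_f(T^*_ZP)$. If $f$ is an immersion then $\varpi_f$, as a base change of $f$, is injective, hence identifies $T^*_ZP$ with its image $\varpi_f(T^*_ZP)$; if $f$ is an open immersion then $\mathrm{d}f$ is an isomorphism, so $\rho_f$ is an isomorphism, so its base change $Z\times_X\rho_f$ is an isomorphism, and by (1) this isomorphism identifies $T^*_ZX$ with $T^*_ZP$.

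For (3), unwinding the definition shows that $\varpi_u^{-1}(T^*_XP)$ is the base change $Z\times_X T^*_XP$ of $T^*_XP\subset X\times_P T^*P$ along $u$. Fiberwise, a point of $Z\times_X T^*_XP$ is a covector $\xi$ at a point of $Z$ with ${}^{\mathrm{t}}(\mathrm{d}f)(\xi)=0$, whence ${}^{\mathrm{t}}(\mathrm{d}(f\circ u))(\xi)={}^{\mathrm{t}}(\mathrm{d}u)\,{}^{\mathrm{t}}(\mathrm{d}f)(\xi)=0$, so $\xi$ lies in $T^*_ZP$; this is the claimed inclusion. If $u$ is an open immersion it is \'etale, so $\mathrm{d}u$ is bijective, ${}^{\mathrm{t}}(\mathrm{d}u)$ is injective, the implication above reverses, and one obtains $Z\times_X T^*_XP=T^*_ZP$. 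Throughout, the only genuinely delicate point is the bookkeeping: keeping track of which fiber product each object sits in and which identifications are licensed by which immersion hypothesis. In particular the inclusion in (3) is strict for a general closed immersion $u$, and it is precisely the surjectivity of $\mathrm{d}u$ --- available for open immersions --- that upgrades it to an equality; matching the hypotheses of (2) and (3) to these linear-algebra facts is where one must be attentive.
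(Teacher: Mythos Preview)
Your proof is correct and follows essentially the same strategy as the paper: for (1) you use the transitivity $\rho_u\circ(Z\times_X\rho_f)=\rho_{f\circ u}$, which is exactly what the paper encodes in its commutative diagram of cartesian squares; (2) then follows from (1) in both treatments. The only organizational difference is in (3): the paper first uses the diagram from (1) to reduce to the case $X=P$ (i.e.\ to checking $Z\times_X T^*_XX\subset T^*_ZX$), then reduces a general immersion to a closed one and finishes by a local-coordinate computation, whereas you argue directly on fibers via ${}^{\mathrm t}(\mathrm d(f\circ u))={}^{\mathrm t}(\mathrm du)\circ{}^{\mathrm t}(\mathrm df)$ without any reduction --- but this is the same linear algebra packaged differently, not a genuinely different route.
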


\begin{proof}
1) First, let us prove part 1) of the Lemma. 
The composition 
$(f \circ u ) ^* \Omega _P 
\riso 
u ^* \circ  f ^* \Omega _P
\to u ^* \Omega _X \to \Omega _Z$
is the canonical one. 
 Indeed, since this is local, then we reduce to the case where varieties are affine
and then this is checked by an easy computation.
This implies that the composition
$Z \times _{P}T ^{*} P
\overset{Z \times _{X}\rho _{f}}{\longrightarrow}
Z \times _{X}T ^{*} X
\overset{\rho _{u}}{\longrightarrow}
T ^{*} Z$
is equal to $\rho _{f\circ u}$.
Consider the following diagram
\begin{equation}
\label{lem-incluT*strat-diag}
\xymatrix{
{T ^{*} _{Z} Z} 
\ar@{^{(}->}[d] ^-{}
   \ar@{}[dr]|\square
& 
{T ^{*} _{Z} X} 
\ar[l] ^-{}
\ar@{^{(}->}[d] ^-{}
   \ar@{}[dr]|\square
& 
{T ^{*} _{Z} P} 
\ar[l] ^-{}
\ar@{^{(}->}[d] ^-{}
\\
{T ^{*} Z} 
& 
{Z \times _{X}T ^{*} X} 
   \ar@{}[dr]|\square
\ar[l] ^-{\rho _u}
\ar[d] ^-{\varpi _u}
& 
{Z \times _{P}T ^{*} P} 
\ar[l] ^-{Z \times _{X}\rho _{f}}
\ar[d] ^-{}
\ar[r] ^-{\varpi _{f}} 
   \ar@{}[dr]|\square
&
{Z \times _{X}T ^* P}
\ar[d] ^-{u} 
\\ 
& 
{T ^{*} X} 
& 
{X \times _{P}T ^{*} P} 
\ar[l] ^-{\rho _f} 
\ar[r] ^-{\varpi _f} 
&
{T ^* P,}
}
\end{equation}
where the upper left square and the composition of both
upper squares are by definition cartesian (for the second case, use 
$\rho _{f \circ u}= \rho _u \circ (Z \times _{X}\rho _{f})$).
This yields the cartesianity of the upper right square. 
Hence, we get the equality 
$(Z \times _{X}\rho _{f} )^{-1} ( T ^{*} _Z X) = T ^{*} _{Z} P$.
When $u$ is an immersion, this yields 
$\mathscr{T} _{f} (T ^{*} _Z X) =\varpi _f (T ^{*} _Z P)$.
The other assertions of 1) are obvious. 

2) Now, let us check part 2) of the Lemma.
Since
$(Z \times _{X}\rho _{f} )^{-1} (Z \times _{X} T ^* _X X ) 
= Z \times _{X} \rho _{f} ^{-1} (T ^* _X X ) = 
 Z \times _{X} T ^{*} _{X} P$ 
and 
$(Z \times _{X}\rho _{f} )^{-1} ( T ^{*} _Z X) = T ^{*} _{Z} P$ (this is the first part of the Lemma),
we reduce to check the inclusion 
$Z \times _{X} T ^* _X X \subset T ^{*} _Z X$ 
(resp. equality $Z \times _{X} T ^* _X X = T ^{*} _Z X$) of subvarieties of $T ^* X$.

i) First, suppose that $u$ is an open immersion. 
In that case $\rho _u \colon Z \times _X T ^* X\to T ^* Z$ is an isomorphism
and we check easily the desired equality
$\rho _u ^{-1} (  T ^* _Z Z ) =  Z \times _X T ^* _X X$ by coming back to the definition of $T ^* _X X$
and $T ^* _Z Z $. 

ii) Suppose now that $u$ is only an immersion. 
Let $\overline{Z}$ be the closure of $Z$ in $X$. 
From the part 2.i) of the proof, the respective case of the part 2) of the Lemma is satisfied. 
Hence, since $Z \to \overline{Z}$ is an open immersion, 
we get $Z \times _{\overline{Z}} T ^* _{\overline{Z}} X = T ^* _{Z} X $.
Using this latter equality, we reduce to check the inclusion 
$\overline{Z} \times _{X} T ^* _X X \subset  T ^{*} _{\overline{Z}} X$.
In other words, we can suppose $Z = \overline{Z}$.
Moreover, from the part 1) and the respective case of the part 2) of the Lemma, the check is local in $X$.
Hence, we can suppose that
$X$ has local coordinates $t _1, \dots, t _d$ such that 
$\overline{t} _1,\dots, \overline{t} _r$, the global section of $\O _Z$ induced by 
$t _1, \dots , t _r$, are local coordinates of $Z$.
We get local coordinates
$t _1 , \dots, t _d, \xi _1, \dots, \xi _d$ of $T ^{*} X$, where $\xi _i$ is the element associated with
$\partial _i$, the derivation with respect to $t _i$. 
We get also local coordinates
$\overline{t} _1,\dots, \overline{t} _r, \overline{\xi} _1,\dots, \overline{\xi} _r$ of $T ^* Z$,
where $ \overline{\xi} _i$ is
the element associated with
$\overline{\partial} _i$, the derivation with respect to $\overline{t} _i$. 
Then, $\rho _u \colon Z \otimes _X T ^* X \to T ^* Z$ is smooth 
and $1 \otimes \xi _{r+1}, \dots, 1 \otimes  \xi _d$ are local coordinates relatively to $\rho _u$
(in fact they induce an isomorphism of the form
$Z \otimes _X T ^* X \riso \A ^{d-r} _{T ^* Z}$)
and the image of 
$\overline{\xi} _1\dots, \overline{\xi} _r$ via $\rho _u$ 
are $1 \otimes \xi _{1}, \dots, 1 \otimes  \xi _r$.
Hence we get
$\rho _u ^{-1} (T ^* _Z Z)= T ^* _Z X = V ( 1 \otimes \xi _{1}, \dots, 1 \otimes  \xi _r)$
and 
$Z \times _{X} T ^* _X X =  V ( 1 \otimes \xi _{1}, \dots, 1 \otimes  \xi _d)$ as subvarieties of 
$Z \times _{X} T ^* X$.
Hence, 
$Z \times _{X} T ^* X \subset T ^* _Z X $.
\end{proof}

\begin{lem}
\label{prop-incluT*strat}
Let $f\colon X \to P$ be a morphism of smooth $k$-varieties
and 
$(X_i)_{1\leq i\leq r}$ be a family of smooth subvarieties of $X$ (resp. open subvarieties of $X$) such that 
$X \subset \cup _{i=1} ^{r} X _i$.
Then, $T ^{*} _{X} P \subset \cup _{i=1} ^{r} T ^{*} _{X _i} P$ 
(resp. $T ^{*} _{X} P = \cup _{i=1} ^{r} T ^{*} _{X _i} P$).
\end{lem}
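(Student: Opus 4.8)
The plan is to deduce the statement directly from part~(3) of Lemma~\ref{lem-incluT*strat}, applied to each of the immersions (resp. open immersions) $u _i \colon X _i \hookrightarrow X$ together with the fixed morphism $f \colon X \to P$. Denote by $\pi \colon T ^{*} _X P \to X$ the morphism induced by the canonical projection $X \times _P T ^{*} P \to X$. Since each $u _i$ is an immersion, its base change $X _i \times _P T ^{*} P \hookrightarrow X \times _P T ^{*} P$ is again an immersion (an open immersion in the respective case), so all the $T ^{*} _{X _i} P$, as well as $T ^{*} _X P$, may be regarded as subvarieties of the single ambient space $X \times _P T ^{*} P$; this is what makes the asserted (in)equality meaningful. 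For each $i$, Lemma~\ref{lem-incluT*strat}(3) applied to $u _i$ and $f$ gives, inside $X _i \times _P T ^{*} P$, the inclusion $\pi ^{-1} (X _i) = X _i \times _X T ^{*} _X P \subset T ^{*} _{X _i} P$, which is an equality when $u _i$ is an open immersion.

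Next, because the $X _i$ cover $X$ --- and $X$, being a $k$-variety, is a Jacobson scheme, so this is a covering on the level of all points, not merely closed points --- their preimages cover $T ^{*} _X P$, that is, $T ^{*} _X P = \bigcup _{i=1} ^{r} \pi ^{-1} (X _i)$. Combining this with the previous step yields $T ^{*} _X P = \bigcup _{i=1} ^{r} \pi ^{-1} (X _i) \subset \bigcup _{i=1} ^{r} T ^{*} _{X _i} P$, which is the asserted inclusion. In the open case each inclusion above is an equality by Lemma~\ref{lem-incluT*strat}(3), so the very same computation gives $T ^{*} _X P = \bigcup _{i=1} ^{r} T ^{*} _{X _i} P$.

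I do not expect a genuine obstacle here: the statement is a bookkeeping consequence of the base-change compatibility already recorded in Lemma~\ref{lem-incluT*strat}. The only points deserving a word of justification are the ones flagged above --- that the various $T ^{*} _{X _i} P$ can all be viewed inside $X \times _P T ^{*} P$ via the immersions $X _i \times _P T ^{*} P \hookrightarrow X \times _P T ^{*} P$; that $\pi ^{-1} (X _i)$ is, by construction, exactly the fibre product $X _i \times _X T ^{*} _X P$ occurring in Lemma~\ref{lem-incluT*strat}(3); and that a finite family of locally closed subvarieties whose union is $X$ covers $X$ at the level of points, which is precisely what makes the fibrewise argument go through.
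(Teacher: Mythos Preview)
Your proof is correct and follows essentially the same approach as the paper: both arguments amount to combining the covering $T^{*}_{X}P = \bigcup_i (X_i \times_X T^{*}_{X}P)$ with the inclusion $X_i \times_X T^{*}_{X}P \subset T^{*}_{X_i}P$ from Lemma~\ref{lem-incluT*strat}(3). The only cosmetic difference is that the paper first uses Lemma~\ref{lem-incluT*strat}(1) to reduce to the case $X=P$ and then applies part~(3) there, whereas you invoke part~(3) directly for general $P$; since part~(3) is already stated for arbitrary $P$, your route is slightly more economical. (Your remark about Jacobson schemes is harmless but unnecessary: the hypothesis $X \subset \bigcup_i X_i$ is a statement about underlying topological spaces and already concerns all points.)
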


\begin{proof}
From the first equality of Lemma \ref{lem-incluT*strat}, we reduce to the case $X=P$. 
Using the second part of Lemma \ref{lem-incluT*strat},
we get the inclusions
$X _i \times _{X} T ^* _X X \subset T ^{*} _{X _i} X$ (resp. the equalities $X _i \times _{X} T ^* _X X = T ^{*} _{X _i} X$), 
which yields the desired result when $X=P$.
\end{proof}

\begin{prop}
\label{lem-stab-T_X X}
Let $f \colon X \to Y$ be a smooth morphism of smooth varieties. 
Let $B$ be a smooth subvariety of $Y$ and 
$A:= f ^{-1} (B)$.
\begin{enumerate}

\item The morphism $\rho _{f}$ is a closed immersion. 
If $f$ is étale then $\rho _{f}$ is an isomorphism,

\item 
\label{stab-T_X X2} 

We have
$\rho _f ^{-1} (T ^* _A X) 
\overset{\ref{lem-incluT*strat}.1}{=}
T ^* _A Y
=
\varpi _{f} ^{-1} (T ^{*} _{B} Y)$, 
$\rho _f (\varpi _{f} ^{-1} (T ^{*} _{B} Y))
\subset 
T ^* _A X$
 and
$\mathscr{T} _{f}  ( T ^* _A X)  
\subset T ^{*} _{B} Y$.

\item When $f$ is surjective, we have the equality
$\mathscr{T} _{f} (T ^* _A X)
=
 T ^{*} _{B} Y$.
\end{enumerate}

\end{prop}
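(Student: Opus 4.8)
The plan is to prove the three assertions of Proposition \ref{lem-stab-T_X X} in order, reducing each to a local computation in coordinates afforded by the smoothness (resp. étaleness, resp. surjectivity) of $f$.

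\emph{Part (1).} Since $f$ is smooth, the conormal sequence $0 \to f ^{*} \Omega ^{1} _{Y} \to \Omega ^{1} _{X} \to \Omega ^{1} _{X/Y} \to 0$ is exact and locally split, with $\Omega ^{1} _{X/Y}$ locally free. Dualizing and passing to the associated vector bundles over $X$, the canonical morphism $\rho _{f}\colon X \times _{Y} T ^{*} Y \to T ^{*} X$ is the inclusion of a subbundle, hence a closed immersion; when $f$ is étale, $\Omega ^{1} _{X/Y} = 0$ so $\rho _{f}$ is an isomorphism. This is standard and I would just cite the conormal exact sequence.

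\emph{Part (2).} First, $A = f ^{-1}(B)$ means $A = X \times _{Y} B$ as a subscheme, and since $f$ is smooth, $A$ is smooth over $B$ with the same relative dimension, and the conormal bundle of $A$ in $X$ is $f ^{*}$ of the conormal bundle of $B$ in $Y$ along $A$. Concretely, with the diagram of Notation \ref{stab-T_X X}, I would check $\varpi _{f} ^{-1}(T ^{*} _{B} Y) = T ^{*} _{A} Y$ directly: a point of $X \times _{Y} T ^{*} Y$ lies over a point of $T ^{*} _{B} Y$ iff its $Y$-component lies in $B$ and its covector is conormal to $B$, and by the conormal compatibility this is exactly the condition defining $T ^{*} _{A} Y = (Z \times _{X} \rho _{f})^{-1}(T ^{*} _{A} X)$ in the notation of Lemma \ref{lem-incluT*strat}(1) applied to $A \hookrightarrow X \xrightarrow{f} Y$; this also gives $\rho _{f} ^{-1}(T ^{*} _{A} X) = T ^{*} _{A} Y$. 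The inclusion $\rho _{f}(\varpi _{f} ^{-1}(T ^{*} _{B} Y)) \subset T ^{*} _{A} X$ then follows since $\varpi _{f} ^{-1}(T ^{*} _{B} Y) = T ^{*} _{A} Y = \rho _{f} ^{-1}(T ^{*} _{A} X)$ and $\rho _{f}$ is a closed immersion. Finally $\mathscr{T} _{f}(T ^{*} _{A} X) = \varpi _{f}(\rho _{f} ^{-1}(T ^{*} _{A} X)) = \varpi _{f}(T ^{*} _{A} Y) \subset T ^{*} _{B} Y$, the last inclusion because $\varpi _{f}$ maps the fiberwise-conormal locus over $A$ into the conormal locus over $B$ (it may fail to be surjective onto $T ^{*} _{B} Y$ if $f$ is not surjective, which is why only an inclusion is claimed).

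\emph{Part (3).} Apply Part (2) with $B = Y$, so $A = X$: then $\mathscr{T} _{f}(T ^{*} _{X} X) = \varpi _{f}(T ^{*} _{X} Y) \subset T ^{*} _{Y} Y$, using $T ^{*} _{Y} Y$ = zero section of $T ^{*} Y$. For the reverse inclusion I would use surjectivity of $f$: since $f$ is smooth and surjective, $\varpi _{f}\colon X \times _{Y} T ^{*} Y \to T ^{*} Y$ restricted to the zero section $X \times _{Y} T ^{*} _{Y} Y \to T ^{*} _{Y} Y \cong Y$ is just $f$ itself, which is surjective; and $T ^{*} _{X} Y = \varpi _{f} ^{-1}(T ^{*} _{Y} Y)$ by Part (2), which contains (in fact equals, by the computation above with $A = X$) the image of the zero section. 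Hence $\varpi _{f}(T ^{*} _{X} Y) \supset f(X) = Y = T ^{*} _{Y} Y$, giving equality.

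The main obstacle I anticipate is bookkeeping rather than depth: namely making the identifications $A = X \times _{Y} B$ and the conormal-bundle compatibility $N ^{*} _{A/X} = f ^{*} N ^{*} _{B/Y}$ precise enough that the set-theoretic equalities among the various pullbacks $\rho _{f} ^{-1}$, $\varpi _{f} ^{-1}$ drop out cleanly, and being careful that in Part (3) the surjectivity hypothesis is genuinely used (an unramified but non-surjective $f$ would only give $\subset$). Everything reduces, via Lemma \ref{lem-incluT*strat}, to the case $X = Y$ handled by local coordinates, exactly as in the proofs of Lemma \ref{lem-incluT*strat} and Lemma \ref{prop-incluT*strat}.
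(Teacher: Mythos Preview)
Your argument is correct. Part (1) and Part (3) match the paper's proof essentially verbatim (the paper cites \cite[1.7.11.(iv)]{EGAII} for (1) and observes $\pi_Y(\mathscr{T}_f(T^*_X X))=Y$ for (3), which is exactly your surjectivity-of-$f$-on-zero-sections argument).

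In Part (2) you take a different route from the paper. You go directly: invoking Lemma \ref{lem-incluT*strat}(1) for $A\hookrightarrow X\to Y$ to get $\rho_f^{-1}(T^*_A X)=T^*_A Y$, and then using the conormal compatibility $N^*_{A/X}\cong f^*N^*_{B/Y}|_A$ (equivalently: $T_aA$ is the preimage of $T_{f(a)}B$ under the surjection $T_aX\twoheadrightarrow T_{f(a)}Y$) to identify $\varpi_f^{-1}(T^*_B Y)=T^*_A Y$ pointwise. The paper instead first proves only the special case $B=Y$ (i.e.\ $T^*_X Y=\varpi_f^{-1}(T^*_Y Y)$) by showing the statement is local in $X$ and $Y$ and transitive in $f$, then reducing to the two building blocks $f$ \'etale and $f\colon \mathbb{A}^n_Y\to Y$, each handled by explicit coordinates; afterwards it bootstraps to general $B$ by applying this special case to $f|_A\colon A\to B$ and combining with Lemma \ref{lem-incluT*strat}(1) for $A\to B\hookrightarrow Y$. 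Your approach is shorter and more geometric; the paper's reduction-to-building-blocks approach avoids any direct appeal to the conormal identification and makes the role of smoothness completely explicit through the factorization. Either is fine.
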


\begin{proof}
1) From \cite[17.11.1]{EGAIV4},
the canonical morphism 
$f ^* \Omega _{Y} \to \Omega _{X}$ is injective. 
By duality we get the surjection 
$\mathcal{T} _X \to f ^* \mathcal{T} _Y$.
Hence, from \cite[1.7.11.(iv) and (v)]{EGAII},
the morphism $\rho _{f}\colon X \times _{Y} T ^* Y \to T ^* X$ is a closed immersion.
When $f$ is etale, 
from \cite[17.11.2]{EGAIV4}, 
the canonical morphism 
$f ^* \Omega _{Y} \to \Omega _{X}$ is an isomorphism and 
then so is $\rho _f$.

2) a) 
In this step, we check the equality
$T ^* _X Y
=
\varpi _{f} ^{-1} (T ^{*} _{Y} Y)$
($= X \times _{Y}T ^{*} _{Y} Y$).

i) From the respective case of \ref{lem-incluT*strat}.2, this is local in $X$. 
Moreover, this is local in $Y$. Indeed, 
let $V$ be an open set of $Y$. We put $U:= f ^{-1} (V)$
and $f _V\colon U \to V$ the induced morphism. 
On one hand 
$(U \times _{Y} T ^* Y )\cap \varpi _{f} ^{-1} (T ^{*} _{Y} Y)
=
U \times _{Y} T ^* _Y Y 
=
U \times _{V} (V \times _Y T ^* _Y Y) 
\overset{\ref{lem-incluT*strat}.2}{=}
U \times _{V}  T ^* _V Y 
\overset{\ref{lem-incluT*strat}.1}{=}
U \times _{V}  T ^* _V V 
=
\varpi _{f _V} ^{-1} (T ^{*} _{V} V)$
and on the other hand
$(U \times _{Y} T ^* Y )\cap
T ^* _X Y
=
U \times _{X} T ^* _X Y
\overset{\ref{lem-incluT*strat}.2}{=}
T ^* _U Y
\overset{\ref{lem-incluT*strat}.1}{=}
T ^* _U V$, which give the localness in $Y$.

ii) Let $g \colon Y \to Z$ be another smooth morphism of varieties.
If this equality is satisfied for $f$ and $g$, i.e. 
if $T ^* _X Y
=
\varpi _{f} ^{-1} (T ^{*} _{Y} Y)$ 
and 
$T ^* _Y Z
=
\varpi _{g} ^{-1} (T ^{*} _{Z} Z)$,
then we get the squares of the following diagram : 
\begin{equation}
\notag
\xymatrix{
{T ^{*} _{X} X} 
\ar@{^{(}->}[d] ^-{}
   \ar@{}[dr]|\square
& 
{X \times _{Y}T ^{*} _Y Y} 
\ar[l] ^-{}
\ar@{^{(}->}[d] ^-{}
   \ar@{}[dr]|\square
& 
{X \times _{Z}T ^{*} _Z Z} 
\ar[l] ^-{}
\ar@{^{(}->}[d] ^-{}
\\
{T ^{*} X} 
& 
{X \times _{Y}T ^{*} Y} 
\ar[l] _-{\rho _g}
& 
{X \times _{Z}T ^{*} Z} 
\ar[l] _-{\rho _f}
\ar@/^0,5cm/[ll] ^-{\rho _{g\circ f}}
}
\end{equation}
are cartesian.
Hence, the rectangle is also cartesian, 
i.e. $T ^* _X Z
= X \times _{Z}T ^{*} _{Z} Z$, 
which is the desired equality.

iii) Since from the step i) the check of equality 
$T ^* _X Y
=
\varpi _{f} ^{-1} (T ^{*} _{Y} Y)
= X \times _{Y}T ^{*} _{Y} Y$ 
is local in $X$ and $Y$,
then we can suppose that $Y$ has local coordinates $t _1,\dots, t _d$ 
and that there exists an etale morphism of the form
$X \to \A ^{n} _Y$ whose composition with the projection $\A ^{n} _Y \to Y$ gives $f$. 
Since from the step ii) the equality 
$T ^* _X Y
=
\varpi _{f} ^{-1} (T ^{*} _{Y} Y)
= X \times _{Y}T ^{*} _{Y} Y$ is transitive with respect to the composition,
we reduce to the case where $n=0$ or $X = \A ^{n} _Y $.
Suppose $n=0$, i.e. $f$ is etale. 
Let 
$t '_1,\dots, t '_d$ 
be the local coordinates of $X$ induced by
$t _1 , \dots, t _d$. 
We get local coordinates
$t _1 , \dots, t _d, \xi _1, \dots, \xi _d$ of $T ^{*} Y$, where $\xi _i$ is the element associated with
the derivation with respect to $t _i$
and 
local coordinates
$t '_1 , \dots, t '_d, \xi '_1, \dots, \xi '_d$ of $T ^{*} X$, where $\xi '_i$ is the element associated with
the derivation with respect to $t '_i$. 
The isomorphism 
$\rho _f \colon X \times _Y T ^* Y \riso T  ^* X$ 
sends $\xi '_i$ to $1 \otimes \xi _i$. 
Since $T _X ^* X = V (\xi ' _1 ,\dots , \xi ' _d)$
then $T _X ^* Y:= \rho _f ^{-1}(T _X ^* X) =
V (1 \otimes \xi  _1 ,\dots , 1 \otimes \xi  _d)
= X \times _Y V (\xi  _1 ,\dots , \xi  _d)
=X \times _Y T _Y ^* Y$.
Suppose now that $X = \A ^{n} _Y $.
Let $t _{d+1},\dots, t _{d+n}$ be the coordinates of $\A ^n _k$ 
and let $\xi _{d+1},\dots, \xi _{d+n}$ be the element of $T ^* \A ^n _k$ associated  respectively 
with the derivation with respect to $t _{d+1},\dots, t _{d+n}$. 
We get local coordinates
$t _1 , \dots, t _{d+n}, \xi _1, \dots, \xi _{d+n}$ of $T ^{*} X$.
In that case the morphism $\rho _f$ is a closed immersion of the form
$\rho _f \colon \A ^n \times T ^* Y \hookrightarrow T ^* X$ so that
$\A ^n \times T ^* Y = V (\xi _{d+1},\dots, \xi _{d+n})$ in $T ^* X$.
Since $T ^* _X X = V (\xi _{1},\dots, \xi _{d+n})$ in $T ^* X$
then $\rho ^{-1} _f(T ^* _X X)$ is the closed subvariety of 
$\A ^n \times T ^* Y$ defined by $\xi _1 = 0,\dots, \xi _d =0$, 
i.e.  $\rho ^{-1} _f(T ^* _X X)= \A ^n \times T ^* _Y Y $
(recall $T ^* _Y Y  = V (\xi _{1},\dots, \xi _{d})$ in $T ^* Y$),
which is the desired equality.

b) Let $i\colon B \hookrightarrow Y$ be the 
structural immersion. By applying 
\ref{lem-incluT*strat}.1 to the case of $A \to B \to Y$, 
we get $(A \times _{B} \rho _i ) ^{-1} (T ^{*} _A B) =T ^{*} _A Y$.
Moreover, $(A \times _{B} \rho _i ) ^{-1} (A \times _{B} T ^{*} _B B )  = A \times _{B} T ^{*} _B Y$. 
From part a), we have $T ^{*} _A B = A \times _{B} T ^{*} _B B $.
This implies the first equality 
$T ^{*} _A Y =A \times _{B} T ^{*} _B Y = X \times _{Y} T ^{*} _B Y =\varpi _{f} ^{-1} (T ^{*} _{B} Y)$.
Hence, we have checked the equality
$\rho _f ^{-1} (T ^* _A X) 
=
\varpi _{f} ^{-1} (T ^{*} _{B} Y)$.
By applying $\rho _f $ to this equality, 
we get 
$\rho _f (\varpi _{f} ^{-1} (T ^{*} _{B} Y))
=
\rho _f ( \rho _f ^{-1} (T ^* _A X) )
\subset 
T ^* _A X$.
By applying this time 
$\varpi _f$ to this equality, we get
$\mathscr{T} _{f}  ( T ^* _A X)  =
\varpi _{f} 
( \rho _f ^{-1} (T ^* _A X) )
=
\varpi _{f} (\varpi _{f} ^{-1} (T ^{*} _{B} Y))
\subset T ^{*} _{B} Y$.
When $f$ is surjective, 
then so is $\varpi _f$ (see \cite[3.5.2.(ii)]{EGAI}).
Hence, the latter inclusion is in fact an equality.

\end{proof}

\subsection{Inverse and direct images of complexes of arithmetic $\D$-modules and characteristic varieties}

\begin{empt}
[Characteristic variety and characteristic cycle (of level $0$)]
Let $\X$ be a smooth $\V$-formal scheme, $X $ be the reduction of $\X$ modulo $\pi $ (recall $\pi$ is a uniformizer of $\V$).
Let $m\in \N$ be an integer. 
Let us recall Berthelot's definition of characteristic varieties (of level $m$) as explained in \cite[5.2]{Beintro2}. 
\begin{enumerate}
\item Let $\G$ be a coherent 
$\D _{X} ^{(0)}$-module. 
Berthelot checked the equality
$T ^*  X := \Spec \gr \D _{X} ^{(0)} $, 
where $\D _{X} ^{(0)}$ is filtered by the order.
Since $\pi _X \colon T ^* X \to X$ is affine, 
the functor $\pi _{X*}$ induces an equivalence from the category
of (quasi-)coherent $\O _{T ^* X}$-modules to that 
of (quasi-)coherent $\gr \D _{X} ^{(0)} $-modules.
We denote by $\sim$ a quasi-inverse functor. 
Even if it is tempting to identify both categories,
we will try to distingue them to avoid confusion.  
Choose a good filtration $(\G _n) _{n\in \N}$,
i.e. a filtration such that $\gr \G$ is a coherent $\gr \D _{X} ^{(0)}$-module
 (see the definition \cite[5.2.3]{Beintro2}).
 We denote by $\widetilde{\gr}$ the composition of $\gr$ with $\sim$.
So, $\widetilde{\gr} \G$ is a coherent $\O _{T ^* X}$-module.
The characteristic variety of level $0$ of $\G$, denoted by 
$\mathrm{Car} ^{(0)} (\G)$ is by definition the support of 
$\widetilde{\gr}  \G$ in $T ^{*} X $ which is viewed  canonically as a  subvariety of $T ^{*} X $.
Berthelot checked that this is well defined (i.e. that this is independent of the choice of the good filtration). 
Moreover, he defined the characteristic cycle associated with $\G$ 
that we will denote (we add $(0)$ to avoid confusion) by
$Z\mathrm{Car} ^{(0)} (\G)$ (for a detailed definition see 
\cite[5.4.1]{Beintro2}).

\item Let  $\FF$ be a coherent 
$\widehat{\D} _{\X} ^{(0)}$-module. 
The characteristic variety $\mathrm{Car} ^{(0)} (\FF)$ of level $0$ of $\FF$ 
 is by definition the 
characteristic variety of level $0$ of $\FF/\pi \FF$
as coherent 
$\D _{X} ^{(0)}$-module, i.e. 
$\mathrm{Car} ^{(0)} (\FF):= \mathrm{Car} ^{(0)} (\FF/\pi \FF)$.
Similarly, we define the
characteristic cycle $Z\mathrm{Car} ^{(0)} (\FF)$ of level $0$ of $\FF$ 
by setting 
$Z\mathrm{Car} ^{(0)} (\FF):= Z\mathrm{Car} ^{(0)} (\FF/\pi \FF)$.

\item Let $\E$ be a coherent 
$\widehat{\D} _{\X,\Q} ^{(0)}$-module. 
Choose a  coherent 
$\widehat{\D} _{\X} ^{(0)}$-module $\overset{\circ}{\E}$ without $p$-torsion such
that there exists an isomorphism of $\widehat{\D} _{\X,\Q} ^{(0)}$-modules of the form
$\overset{\circ}{\E} _\Q \riso \E$.
The characteristic variety of level $0$ of $\E$ denoted by 
$\mathrm{Car} ^{(0)} (\E)$ is by definition that 
of $\overset{\circ}{\E}$ as coherent $\widehat{\D} _{\X} ^{(0)}$-module, i.e., 
 $\mathrm{Car} ^{(0)} (\E):= \mathrm{Car} ^{(0)} (\overset{\circ}{\E}/\pi \overset{\circ}{\E})$.
 Berthelot checked that this is well defined. 
Similarly, we define the
characteristic cycle $Z\mathrm{Car} ^{(0)} (\E)$ of level $0$ of $\E$ 
by setting 
 $Z\mathrm{Car} ^{(0)} (\E):= Z\mathrm{Car} ^{(0)} (\overset{\circ}{\E}/\pi \overset{\circ}{\E})$.

\item Let $(\NN,\phi)$ be a coherent $F\text{-}\D ^{\dag} _{\X,\Q}$-module, 
i.e. a coherent $\D ^{\dag} _{\X,\Q}$-module $\NN$ and an isomorphism of $\D ^{\dag} _{\X,\Q}$-modules $\phi$  of the form
$\phi \colon F ^* \NN \riso \NN$.
Then there exists a (unique up to isomorphism) coherent $\widehat{\D} _{\X,\Q} ^{(0)}$-module
$\NN ^{(0)}$ and an isomorphism 
$\phi ^{(0)}\colon \widehat{\D} _{\X,\Q} ^{(s)} \otimes _{\widehat{\D} _{\X,\Q} ^{(0)}}\NN ^{(0)} 
\riso F ^* \NN ^{(0)}$
which induces canonically $\phi$.
Then, 
the characteristic variety of $\NN$ denoted by 
$\mathrm{Car} (\NN)$
is by definition
the  characteristic variety of level $0$ of $\NN ^{(0)}$,
i.e., 
$\mathrm{Car} (\NN ):=  \mathrm{Car} ^{(0)} (\NN ^{(0)})$.
Finally, the characteristic cycle of $\NN$ denoted by 
$Z\mathrm{Car} (\NN)$
is by definition
the  characteristic variety of level $0$ of $\NN ^{(0)}$.

\item Let $\overline{\E} \in D ^{\mathrm{b}} _{\mathrm{coh}} (\D ^{(0)} _{X})$
and let 
$(\E ,\phi) \in F\text{-}D ^{\mathrm{b}} _{\mathrm{coh}} (\D ^{\dag} _{\X,\Q})$.
By definition, we define the characteristic variety of these complexes by setting
$\mathrm{Car} ^{(0)} (\overline{\E}) := \cup _r \mathrm{Car} ^{(0)} (\mathcal{H} ^r (\overline{\E}))$
and
$\mathrm{Car}  (\E) := \cup _r \mathrm{Car} ^{(0)} (\mathcal{H} ^r (\E))$.
\end{enumerate}

\end{empt}

\begin{lem}
\label{lem-supp}
Let $u \colon V \to W$ be a morphism of $k$-varieties.

\begin{enumerate}
\item If $u$ is flat then for any $\O _{W}$-module $\NN$
we have $\supp (u ^* \NN)= u ^{-1} (\supp \NN)$.

\item If $u$ is finite, then for any coherent $\O _{V}$-module $\M$ we have
$\supp (u _{*} (\M)) = u ( \supp (\M))$.

\end{enumerate}

\end{lem}

\begin{proof}
Since a flat morphism of local rings is faithfully flat,
we get the first assertion. Let $\M $ be a coherent $\O _{V}$-module. 
From \cite[5.2.2]{EGAI}, $\supp \M$ is a closed subset of $V$.
Since $u$ is closed, by using \cite[3.4.6]{EGAI}, we get the inclusion
$\supp (u _{*} (\M)) \subset u ( \supp (\M))$.
Set 
$W ' := W \setminus \supp (u _* \M) $, 
$V' := u ^{-1} (W')$ and $u ' \colon V ' \to W'$ the morphism induced by $u$.
Since $u$ is finite, then $u _* \M$ is a coherent $\O _W$-module. 
Hence $W'$ is an open subset of $W$.
Since $u ' _{*} ( \M | V') = u _{*} ( \M)  | W' = 0$,
since $u '$ is affine and $\M | V'$ is a quasi-coherent $\O _{V'}$-module, 
this yields $\M | V'=0$. Hence, $\supp \M \subset V \setminus V' = u ^{-1} (\supp (u _* \M) )$, 
which is equivalent to the inclusion 
$
u ( \supp (\M))
\subset 
\supp (u _{*} (\M)) $.

\end{proof}

\begin{prop}
\label{Carf!f_+finet}
Let $f \colon X \to Y$ be an étale morphism of integral smooth $k$-varieties. 
Let 
$\overline{\E} \in D ^{\mathrm{b}} _{\mathrm{coh}} (\D ^{(0)} _{X})$,
$\overline{\FF} \in D ^{\mathrm{b}} _{\mathrm{coh}} (\D ^{(0)} _{Y})$.

\begin{enumerate}
\item We have the equality
$| \mathrm{Car} ^{(0)} ( f ^{! ^{(0)}} (\overline{\FF}) ) | 
=
\rho _f  ( \varpi _f  ^{-1}
(| \mathrm{Car} ^{(0)} (\overline{\FF})  |))$.

\item If $f$ is moreover finite, then
$| \mathrm{Car} ^{(0)} ( f _{+ ^{(0)}} (\overline{\E}) ) | 
=
\varpi _f \circ \rho _f ^{-1} 
(| \mathrm{Car} ^{(0)} (\overline{\E})  |)
=:\mathscr{T} _f  
(| \mathrm{Car} ^{(0)} (\overline{\E})  |)$.

\item If $f$ is moreover finite and surjective of degree $d$
and if $\overline{\FF} $
is a coherent $\D ^{(0)} _{Y}$-module, then 
\begin{gather}
\label{Carf!f_+finet22}
Z \mathrm{Car} ^{(0)} ( f _{+ ^{(0)}} f ^{! ^{(0)}} (\overline{\FF}) ) 
=
d Z\mathrm{Car} ^{(0)} (\overline{\FF}) .
\end{gather}
\end{enumerate}
\end{prop}

\begin{proof}
Since $f$ is etale, $\rho _f$ is an isomorphism
(this is equivalent to say that the canonical morphism
$\gr \, \D _{X} ^{(0)} \to f ^{*} \gr \, \D _{Y} ^{(0)}$ is an isomorphism).
We get the etale morphism of $k$-varieties 
$\mathscr{T} _f   := \varpi _f \circ \rho _f ^{-1}\colon T ^* X \to T ^* Y$ which is 
included in the cartesian square:
\begin{equation}
\notag
\xymatrix{
{T ^* X} 
\ar[r] ^-{\mathscr{T} _f}
  \ar@{}[dr]|\square
  \ar[d] ^-{\pi _X}
& 
{T ^* Y } 
  \ar[d] ^-{\pi _Y}
\\ 
{X} 
\ar[r] ^-{f}
& 
{Y } 
}
\end{equation}
To check the first assertion we can suppose that 
$\overline{\FF}$ is a coherent
$\D ^{(0)} _{Y}$-module.
Let $(\overline{\FF} _n)$ be a good filtration of $\overline{\FF}$.
Then $( f ^*\overline{\FF} _n)$ be a good filtration of $ f ^{! ^{(0)}} (\overline{\FF}) $ 
(which is equal as $\O _X$-module to $f ^* \overline{\FF}$).
With this filtration, we check
$\widetilde{gr} (f ^{! ^{(0)}} (\overline{\FF}) )
\riso 
\mathscr{T} _f ^* 
(\widetilde{gr} (\overline{\FF}))$
(remark that $\pi _{T ^* X, *} \circ \mathscr{T} _f ^* \circ \sim $ is isomorphic to 
$\gr \, \D _{X} ^{(0)}  
\otimes  _{f ^{-1} \gr \, \D _{Y} ^{(0)} } f ^{-1} (-)$
as functor from the category of 
quasi-coherent 
$\gr \, \D _{Y} ^{(0)} $-modules
to that of 
quasi-coherent $\gr \, \D _{X} ^{(0)}$-modules).
From Lemma \ref{lem-supp}.1,
since $\mathscr{T} _f$ is flat we get
$\supp \mathscr{T} _f ^* 
(\widetilde{gr} (\overline{\FF}))
=
\mathscr{T} _f ^{-1} 
(\supp (\widetilde{gr} (\overline{\FF})))$.
Since $ \mathscr{T} _f ^{-1} (\supp (\widetilde{gr} (\overline{\FF})))=\rho _f ( \varpi _f  ^{-1}(\supp (\widetilde{gr} (\overline{\FF}))))$, 
then we obtain the first equality of the proposition.

Suppose now that $f$ is finite and etale. 
To check the second assertion we can suppose that 
$\overline{\E}$ is a coherent
$\D ^{(0)} _{X}$-module.
Let $(\overline{\E} _n)$ be a good filtration of $\overline{\E}$.
Then $( f _*\overline{\E} _n)$ be a good filtration of $ f _{+ ^{(0)}} (\overline{\E}) $ 
(which is isomorphic to $f _{*} (\overline{\E}) $ as $\O _Y$-module.
With this filtration, we check
$\widetilde{gr} (f _{+ ^{(0)}} (\overline{\E}) )
\riso 
\mathscr{T} _{f *} 
(\widetilde{gr} (\overline{\E}))$.
From Lemma \ref{lem-supp}.2,
since $\mathscr{T} _{f } $ is finite, 
we get 
$\supp (\mathscr{T} _{f *} 
(\widetilde{gr} (\overline{\E})))
=
\mathscr{T} _{f } (\supp 
(\widetilde{gr} (\overline{\E})))$.
Hence, we get the second equality of the proposition.

Suppose now that $f$ is finite and etale and surjective of degree $d$.
From the first and the second equality of the proposition, we get
$|\mathrm{Car} ^{(0)} ( f _{+ ^{(0)}} f ^{! ^{(0)}} (\overline{\FF}) )| 
=
\mathscr{T} _f  (\mathscr{T} _f  ^{-1}(|\mathrm{Car} ^{(0)} (\overline{\FF}) |) )$.
Since $f$ is surjective, then so is $\mathscr{T} _f $.
Hence 
$|\mathrm{Car} ^{(0)} ( f _{+ ^{(0)}} f ^{! ^{(0)}} (\overline{\FF}) )| 
=
|\mathrm{Car} ^{(0)} (\overline{\FF}) | $.
By unicity of the structure of reduced subscheme of $T ^* Y$ attached to a closed subspace (of $T ^* Y$), we have in fact the equality 
$\mathrm{Car} ^{(0)} ( f _{+ ^{(0)}} f ^{! ^{(0)}} (\overline{\FF}) ) 
=
\mathrm{Car} ^{(0)} (\overline{\FF}) $
as subvariety.
It remains to compute the multiplicity (see Berthelot's definition of characteristic cycles of \cite[5.4.1]{Beintro2}).
Let $(\overline{\FF} _n)$ be a good filtration of $\overline{\FF}$.
From what we have already checked above in the proof,
we have the good filtration $( f _* f ^*\overline{\FF} _n)$ of $f _{+ ^{(0)}}  f ^{! ^{(0)}} (\overline{\FF}) $ 
and with this filtration, 
$\widetilde{gr} (f _{+ ^{(0)}}  f ^{! ^{(0)}} (\overline{\FF}) )
\riso 
\mathscr{T} _{f *} \circ   \mathscr{T} _f ^* 
(\widetilde{gr} (\overline{\FF}))$.
Then, we obtain the desired computation using 
Lemma \cite[A.1.3]{Fulton-Intersection} and the following fact :
if $\phi \colon A \to B$ is an étale morphism, $\mathfrak{Q}$ a prime ideal of $B$, 
$\mathfrak{P}:= \phi ^{-1} (\mathfrak{Q})$, 
if $M$ is an $A _{\mathfrak{P}}$-module then $B _{\mathfrak{Q}} \otimes _{A _{\mathfrak{P}}} M$ has the same length as $B _{\mathfrak{Q}}$-module than $M$ as $A _{\mathfrak{P}}$-module.
\end{proof}

Every results of \cite[2]{Laumon-TransfCanon} concerning 
the extraordinary inverse and direct images of complexes of filtered arithmetic $\D$-modules are still valid for arithmetic $\D$-modules at level $0$ without new arguments for the check.
For the reader convenience, via the following two propositions we translate in the context of arithmetic $\D$-modules of level $0$ the corollaries 
\cite[2.5.1 and 2.5.2]{Laumon-TransfCanon}  that we will need below to check \ref{dag-carf+redu} which will be an ingredient
of the proof of 
\ref{lem-loc-acyc}.

\begin{prop}
[Laumon]
Let $f \colon X \to Y$ be a morphism of smooth $k$-varieties. 
For any 
$\overline{\FF} \in D ^{\mathrm{b}} _{\mathrm{coh}} (\D ^{(0)} _{Y})$ such that 
the restriction $\rho _f | \varpi _{f} ^{-1}( | \mathrm{Car} ^{(0)} (\overline{\FF}) |) $ is proper, we have 
$f ^{! ^{(0)}} (\overline{\FF}) \in  D ^{\mathrm{b}} _{\mathrm{coh}} (\D ^{(0)} _{X})$ and 
$| \mathrm{Car} ^{(0)} (f ^{! ^{(0)}}\overline{\FF} ) | \subset \rho _{f} ( \varpi _{f} ^{-1} 
( | \mathrm{Car} ^{(0)} (\overline{\FF}) |))$.
\end{prop}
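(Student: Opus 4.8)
The plan is to reduce the statement to the standard results on coherence and estimates for characteristic varieties under the two elementary types of morphisms — closed immersions and smooth morphisms (in particular projections from affine space) — and to handle each of these by the corresponding result of Laumon at level $0$, imported via the remark preceding the proposition. First I would factor $f\colon X\to P$ (here I write $P$ for the target, but the statement uses $Y$) through its graph, $f=\mathrm{pr}\circ\gamma_f$ where $\gamma_f\colon X\hookrightarrow X\times_k Y$ is the graph immersion (a closed immersion, since $X$ and $Y$ are separated) and $\mathrm{pr}\colon X\times_k Y\to Y$ is the (smooth) projection. By transitivity of $f^{!^{(0)}}$ and by the transitivity of the operation $\mathscr{T}$ on subvarieties of cotangent spaces recalled in Notation~\ref{stab-T_X X}, it suffices to prove the proposition for these two special cases; one must only check that the properness hypothesis on $\rho_f|\varpi_f^{-1}(|\mathrm{Car}^{(0)}(\overline{\FF})|)$ propagates correctly through the factorization, which follows from the compatibility of $\rho$ and $\varpi$ with composition (the cartesian diagrams in \ref{lem-incluT*strat-diag}) together with $\mathscr{T}_{f\circ u}=\mathscr{T}_f\circ\mathscr{T}_u$.

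For the smooth case I would invoke the level-$0$ analogue of Laumon's result on extraordinary inverse images along smooth morphisms: $f^{!^{(0)}}=f^{*^{(0)}}$ up to shift is exact and preserves coherence with no condition at all (here $\rho_f$ is a closed immersion by Proposition~\ref{lem-stab-T_X X}.1, so $\varpi_f^{-1}(\cdot)\to\rho_f(\varpi_f^{-1}(\cdot))$ is automatically proper), and the characteristic variety transforms by $|\mathrm{Car}^{(0)}(f^{!^{(0)}}\overline{\FF})|=\rho_f(\varpi_f^{-1}(|\mathrm{Car}^{(0)}(\overline{\FF})|))$, which is the pullback of the characteristic variety along the cotangent map — this is the non-characteristic estimate, which is an equality in the smooth case. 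For the closed immersion case, here $\varpi_f$ is (an isomorphism onto) $T^*_XY$ and $\rho_f$ is the projection $X\times_Y T^*Y\to T^*X$, whose restriction to $\varpi_f^{-1}(|\mathrm{Car}^{(0)}\overline{\FF}|)$ is exactly what the hypothesis requires to be proper; granting this, the level-$0$ version of \cite[2.5.1--2]{Laumon-TransfCanon} gives coherence of $f^{!^{(0)}}\overline{\FF}$ together with the inclusion $|\mathrm{Car}^{(0)}(f^{!^{(0)}}\overline{\FF})|\subset\rho_f(\varpi_f^{-1}(|\mathrm{Car}^{(0)}\overline{\FF}|))$.

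The main obstacle I expect is bookkeeping rather than conceptual: one must be careful that the properness condition imposed on $f$ is precisely the condition under which Laumon's estimates hold for the composite, i.e. that $\rho_{\gamma_f}|\varpi_{\gamma_f}^{-1}(|\mathrm{Car}^{(0)}(\mathrm{pr}^{!^{(0)}}\overline{\FF})|)$ is proper whenever $\rho_f|\varpi_f^{-1}(|\mathrm{Car}^{(0)}\overline{\FF}|)$ is. Since $\mathrm{pr}$ is smooth we know $|\mathrm{Car}^{(0)}(\mathrm{pr}^{!^{(0)}}\overline{\FF})|=\rho_{\mathrm{pr}}(\varpi_{\mathrm{pr}}^{-1}(|\mathrm{Car}^{(0)}\overline{\FF}|))$ exactly, and then the cartesian squares of Lemma~\ref{lem-incluT*strat} identify $\varpi_{\gamma_f}^{-1}$ of this with (a base change of) $\varpi_f^{-1}(|\mathrm{Car}^{(0)}\overline{\FF}|)$ and $\rho_{\gamma_f}$ with (a base change of) $\rho_f$, so properness is preserved. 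Finally $\rho_f(\varpi_f^{-1}(\cdot))=\rho_{\gamma_f}(\varpi_{\gamma_f}^{-1}(\rho_{\mathrm{pr}}(\varpi_{\mathrm{pr}}^{-1}(\cdot))))$ by transitivity of $\mathscr{T}$ composed with the projection $\varpi$, which matches the asserted estimate for the composite. The remaining verifications — exactness and coherence — are then immediate from the two special cases and the triangulated structure, with no condition needed in the smooth step and the hypothesis exactly covering the closed-immersion step.
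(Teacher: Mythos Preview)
The paper does not actually prove this proposition. It is stated as a direct translation of Laumon's corollary \cite[2.5.1]{Laumon-TransfCanon} into the level-$0$ setting, prefaced by the remark that ``every result of \cite[2]{Laumon-TransfCanon} concerning the extraordinary inverse and direct images of complexes of filtered arithmetic $\D$-modules are still valid for arithmetic $\D$-modules at level $0$ without new arguments for the check.'' So there is nothing in the paper to compare your argument against: the author simply asserts that Laumon's proof carries over verbatim.

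Your sketch, which factors $f$ through its graph as a closed immersion followed by a smooth projection and treats each piece separately, is the standard strategy and is essentially how Laumon proceeds in the filtered setting; in that sense you are reconstructing what the paper takes for granted. One small point of bookkeeping: the transitivity you invoke from Notation~\ref{stab-T_X X} is stated there for $\mathscr{T}_f=\varpi_f\circ\rho_f^{-1}$, which goes in the \emph{direct image} direction, whereas what you need here is transitivity of the dual operation $V\mapsto\rho_f(\varpi_f^{-1}(V))$ going from $T^*Y$ to $T^*X$. This dual transitivity holds by the same cartesian diagram (indeed by the diagram in \ref{lem-incluT*strat-diag}), but it is not literally the statement recorded in \ref{stab-T_X X}, so you should say so explicitly. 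Also, for a closed immersion $\varpi_f$ identifies $X\times_Y T^*Y$ with $T^*Y|_X$, not with the conormal variety $T^*_XY$; this does not affect the argument but your parenthetical is slightly off. With these cosmetic fixes your plan is sound.
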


\begin{prop}
[Laumon]
\label{carf+redu}
Let $f \colon X \to Y$ be a morphism of smooth $k$-varieties. 
For any 
$\overline{\E} \in D ^{\mathrm{b}} _{\mathrm{coh}} (\D ^{(0)} _{X})$ such that 
the restriction $\varpi _f | \rho _{f} ^{-1}( | \mathrm{Car} ^{(0)} (\overline{\E}) |) $ is proper, we have 
$f _{+ ^{(0)}} (\overline{\E}) \in  D ^{\mathrm{b}} _{\mathrm{coh}} (\D ^{(0)} _{X})$ and 
$| \mathrm{Car} ^{(0)} ( f _{+ ^{(0)}}\overline{\E} ) | \subset \varpi _{f} ( \rho _{f} ^{-1} 
( | \mathrm{Car} ^{(0)} (\overline{\E}) |))=:\mathscr{T} _f ( | \mathrm{Car} ^{(0)} (\overline{\E}) |)$.
\end{prop}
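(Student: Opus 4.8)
The plan is to deduce this, exactly as in the proof of \cite[2.5.2]{Laumon-TransfCanon}, from the filtered refinement of the direct image functor; the point is that Laumon's arguments use only the formal properties of the order filtration on $\D ^{(0)}$, all of which are available here: the sheaf $\D ^{(0)} _X$ carries the filtration by order of operators, with $\gr \D ^{(0)} _X$ canonically isomorphic to $\pi _{X*} \O _{T ^{*} X}$ (the symmetric algebra on the tangent sheaf), so that a good filtration on a coherent $\D ^{(0)} _X$-module has a coherent $\O _{T ^{*} X}$-module as associated graded, whose support is $| \mathrm{Car} ^{(0)} |$ of the module and is independent of the chosen good filtration.

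First I would set up, transcribing \cite[2]{Laumon-TransfCanon} verbatim at level $0$, the Rees sheaf $\widetilde{\D} ^{(0)} _X$ together with the filtered direct image $\widetilde{f} _{+} \colon D ^{\mathrm{b}} _{\mathrm{coh}} (\widetilde{\D} ^{(0)} _X) \to D ^{\mathrm{b}} (\widetilde{\D} ^{(0)} _Y)$, built from the filtered transfer bimodule $\D ^{(0)} _{X \to Y}$ and $R f _*$. Any $\overline{\E} \in D ^{\mathrm{b}} _{\mathrm{coh}} (\D ^{(0)} _X)$ admits a lift to $D ^{\mathrm{b}} _{\mathrm{coh}} (\widetilde{\D} ^{(0)} _X)$ along the forgetful functor (essential surjectivity as in \cite[2]{Laumon-TransfCanon}), and $f _{+ ^{(0)}} \overline{\E}$ is recovered by forgetting the filtration on $\widetilde{f} _{+}$ of any such lift. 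The key input, which I would copy from \cite[2]{Laumon-TransfCanon} without change, is the canonical isomorphism
\[
\gr \bigl( \widetilde{f} _{+} \widetilde{\E} \bigr) \; \simeq \; R \varpi _{f*} \, \bigl( \L \rho _f ^{*} (\gr \widetilde{\E}) \otimes _{\O} \mathcal{L} _f \bigr) [d _f]
\]
in $D ^{\mathrm{b}} (\O _{T ^{*} Y})$, for an invertible sheaf $\mathcal{L} _f$ and an integer $d _f$ depending only on $f$; its proof factors $f$ as the graph immersion $X \hookrightarrow X \times Y$ (a closed immersion, $Y$ being separated and smooth) followed by the projection $X \times Y \to Y$ (smooth), treats these two cases separately, and reassembles them using the transitivity $\mathscr{T} _{g \circ f} = \mathscr{T} _g \circ \mathscr{T} _f$ recalled in Notation \ref{stab-T_X X}; the same transitivity, together with the fact that for a closed immersion the map $\varpi$ is itself proper, shows that the properness hypothesis on $\varpi _f | \rho _f ^{-1} (| \mathrm{Car} ^{(0)} (\overline{\E}) |)$ is inherited by the two factors.

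Granting this, the proposition is formal. Fix a good filtration so that $\gr \widetilde{\E}$ is a bounded complex of coherent $\O _{T ^{*} X}$-modules with support $| \mathrm{Car} ^{(0)} (\overline{\E}) |$. In each of the two cases of the factorization $\rho _f$ has finite Tor-dimension ($\rho _f$ is a regular immersion when $f$ is smooth, a flat affine-space bundle when $f$ is a closed immersion), so $\L \rho _f ^{*} (\gr \widetilde{\E})$ lies in $D ^{\mathrm{b}} _{\mathrm{coh}} (\O _{X \times _Y T ^{*} Y})$ with support $\rho _f ^{-1} (| \mathrm{Car} ^{(0)} (\overline{\E}) |)$. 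By hypothesis $\varpi _f$ is proper on this support, so Grothendieck's coherence theorem for proper morphisms shows that $R \varpi _{f*}$ of that complex is again bounded and coherent over $\O _{T ^{*} Y}$, with support contained in $\varpi _f ( \rho _f ^{-1} (| \mathrm{Car} ^{(0)} (\overline{\E}) |)) = \mathscr{T} _f (| \mathrm{Car} ^{(0)} (\overline{\E}) |)$. Thus $\gr ( \widetilde{f} _{+} \widetilde{\E} )$ is a bounded coherent complex of $\O _{T ^{*} Y}$-modules supported in $\mathscr{T} _f (| \mathrm{Car} ^{(0)} (\overline{\E}) |)$; translating back via the level-$0$ analogue of the standard comparison between coherent filtered complexes and their gradeds, we obtain $f _{+ ^{(0)}} \overline{\E} \in D ^{\mathrm{b}} _{\mathrm{coh}} (\D ^{(0)} _Y)$ and $| \mathrm{Car} ^{(0)} ( f _{+ ^{(0)}} \overline{\E} ) | \subset \mathscr{T} _f (| \mathrm{Car} ^{(0)} (\overline{\E}) |)$.

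The main obstacle is not any new geometric ingredient but checking that the graded-compatibility isomorphism of \cite[2]{Laumon-TransfCanon} really does carry over at level $0$: one must verify that the filtered transfer bimodule $\D ^{(0)} _{X \to Y}$ for a smooth morphism has the expected graded $\O _{X \times _Y T ^{*} Y}$-module structure, and that $\gr$ commutes with the $R f _*$ occurring in the two model cases. Both hold because the Spencer and Koszul resolutions involved are finite and $\gr \D ^{(0)} = \O _{T ^{*}}$ exactly as in characteristic $0$, so that the spectral sequence computing $\gr$ of the filtered direct image degenerates for the same reasons as there; this is precisely the content of the assertion in the text that ``no new arguments'' are needed.
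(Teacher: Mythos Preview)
Your proposal is correct and matches the paper's approach exactly: the paper gives no proof of this proposition at all, merely stating that Laumon's arguments in \cite[2]{Laumon-TransfCanon} (specifically \cite[2.5.2]{Laumon-TransfCanon}) translate to level $0$ ``without new arguments for the check''. Your sketch is precisely that translation---filtered direct image via the Rees construction, the $\gr$-compatibility isomorphism, factorization through the graph and projection, and Grothendieck coherence under the properness hypothesis---and your closing paragraph identifying the sole verification (that $\gr \D ^{(0)} = \O _{T ^*}$ and the Spencer/Koszul resolutions behave as in characteristic $0$) is exactly the content of the paper's assertion that nothing new is needed.
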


\begin{empt}
\label{nota-formalCar-finite-etal}
Let $f\colon \PP ' \to \PP$ be a proper morphism of smooth formal $\V$-schemes. 
Let $(\E ',\phi)$ be a coherent $F\text{-}\D ^{\dag} _{\PP',\Q}$-module.
From the equivalence of categories of \cite[4.5.4]{Be2}, there exist (unique up to isomorphism) 
a coherent $\widehat{\D} ^{(0)} _{\PP ',\Q}$-module 
$\FF ^{\prime (0)}$ and an isomorphism 
$\phi ^{(0)}\colon \widehat{\D} ^{(s)} _{\PP ',\Q} \otimes _{\widehat{\D} ^{(0)} _{\PP ',\Q}} 
\FF ^{\prime (0)}
\riso 
F ^{*}\FF ^{\prime (0)}$
which induced  $(\E ',\phi)$ by extension.
Fix an integer $i \in \Z$. From the isomorphisms
\begin{gather}
\widehat{\D} ^{(s)} _{\PP,\Q} \otimes _{\widehat{\D} ^{(0)} _{\PP,\Q}}  
\H ^{i} f  _{+ ^{(0)}} (\FF ^{\prime (0)})
\underset{\cite[3.5.3.1]{Beintro2}}{\riso} 
\H ^{i} f  _{+ ^{(s)}}
(\widehat{\D} ^{(s)} _{\PP ',\Q} \otimes _{\widehat{\D} ^{(0)} _{\PP ',\Q}} 
\FF ^{\prime (0)})
\\
\underset{\phi ^{(0)}}{\riso} 
\H ^{i} f  _{+ ^{(s)}}
(F ^{*}\FF ^{\prime (0)})
\underset{\cite[3.5.4.1]{Beintro2}}{\riso} 
F ^{*} \H ^{i} f  _{+ ^{(0)}}
(\FF ^{\prime (0)}),
\end{gather}
we get 
$\mathrm{Car} (\H ^{i} f _{+} (\E'))
=
\mathrm{Car} ^{(0)} (\H ^{i} f  _{+ ^{(0)}} (\FF ^{\prime (0)}))$.
Choose a coherent $\widehat{\D} ^{(0)} _{\PP '}$-module without $p$-torsion 
$\E ^{\prime (0)}$ such that 
$\E ^{\prime (0)} _\Q \riso \FF ^{\prime (0)}$.
Since 
$\H ^{i} f  _{+ ^{(0)}} (\FF ^{\prime (0)})
\riso 
(\H ^{i} f  _{+ ^{(0)}} (\E ^{\prime (0)})) _\Q$, 
then 
by putting 
$\G ^{(0)}$ as equal to the quotient of $\H ^{i} f  _{+ ^{(0)}} (\E ^{\prime (0)})$ by its $p$-torsion part, 
$\mathrm{Car} ^{(0)} (\H ^{i} f  _{+ ^{(0)}} (\FF ^{\prime (0)}))
:=
\mathrm{Car} ^{(0)} (\G ^{(0)})
\subset 
\mathrm{Car} ^{(0)} (\H ^{i} f  _{+ ^{(0)}} (\E ^{\prime (0)}))$ (for the equality, see the definition \cite[5.2.5]{Beintro2}).
Hence, 
$$|\mathrm{Car} (\H ^{i} f _{+} (\E'))|
\subset 
|\mathrm{Car} ^{(0)} (\H ^{i} f  _{+ ^{(0)}} (\E ^{\prime (0)}))|
:=
|\mathrm{Car} ^{(0)} (k \otimes _{\V} \H ^{i} f _{+ ^{(0)}} (\E ^{\prime (0)}))|.$$

By using a spectral sequence (the result is given in the beginning of the proof of \cite[I.5.8]{virrion}), 
we obtain the monomorphism
$k \otimes _{\V} \H ^{i} f _{+ ^{(0)}} (\E ^{\prime (0)})
\hookrightarrow 
\H ^{i} (k \otimes ^{\L}_{\V} f _{+ ^{(0)}} (\E ^{\prime (0)}))$.
Hence, 
$|\mathrm{Car} ^{(0)}(k \otimes _{\V} \H ^{i} f _{+ ^{(0)}} (\E ^{\prime (0)}))|
\subset
|\mathrm{Car} ^{(0)}(  \H ^{i} (k \otimes ^{\L}_{\V} f _{+ ^{(0)}} (\E ^{\prime (0)})) )|$.
We have 
$k \otimes ^{\L} _{\V} f _{+ ^{(0)}} (\E ^{\prime (0)})  
\riso 
f  _{+ ^{(0)}} (\overline{\E} ^{\prime (0)})$,
where  $\overline{\E} ^{\prime (0)}: = k \otimes _{\V} \E ^{\prime (0)}
\riso
k \otimes ^{\L} _{\V} \E ^{\prime (0)}$.
Finally we get
\begin{equation}
\label{0todagf_+}
|\mathrm{Car} (\H ^{i} f _{+ } (\E '))|
\subset 
|\mathrm{Car} ^{(0)} ( \H ^{i} f _{+ ^{(0)}} (\overline{\E} ^{\prime (0)}))|.
\end{equation}
From \ref{carf+redu}, since $f$ is proper then 
$|\mathrm{Car} ^{(0)} ( \H ^{i} f _{+ ^{(0)}} (\overline{\E} ^{\prime (0)}))|
\subset
\mathscr{T} _{f}
(|\mathrm{Car} ^{(0)} (\overline{\E} ^{\prime (0)})|)$.
By Berthelot's definition of the characteristic variety of $\E'$, we have
$\mathrm{Car} (\E ')
=
\mathrm{Car} ^{(0)} (\overline{\E} ^{\prime (0)})$.
Hence, by using the  spectral sequence 
$E _2 ^{r,s} =\mathcal{H} ^{r} f _+ ( \mathcal{H} ^s (\E) )
\Rightarrow 
\mathcal{H} ^{n} f _+ ( \E ) $
and the beginning of the remark \cite[3.7]{Caro-Lagrangianity}
we check the following proposition. 
\end{empt}

\begin{prop}\label{dag-carf+redu}
Let $f\colon \PP ' \to \PP$ be a proper morphism of smooth formal $\V$-schemes. 
Let $(\E ',\phi) \in F\text{-}D ^{\mathrm{b}} _{\mathrm{coh}} (\D ^{\dag} _{\PP',\Q})$.
We have the inclusion
$$|\mathrm{Car} ( f _{+ } (\E '))|
\subset 
\mathscr{T} _{f}
(|\mathrm{Car} (\E ')|).$$
\end{prop}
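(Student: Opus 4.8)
The plan is to reduce the statement for $\D^\dag_{\Q}$-modules equipped with a Frobenius structure to the already-established level-$0$ statement, Proposition \ref{carf+redu}, following exactly the chain of reductions carried out in the unnumbered paragraph immediately preceding the proposition. First I would reduce from a bounded complex $(\E',\phi) \in F\text{-}D^{\mathrm{b}}_{\mathrm{coh}}(\D^\dag_{\PP',\Q})$ to the case of a single coherent $F\text{-}\D^\dag_{\PP',\Q}$-module concentrated in degree $0$: using the definition of the characteristic variety of a complex as the union of the characteristic varieties of its cohomology modules, together with the hypercohomology spectral sequence for $f_+$, we get $|\mathrm{Car}(f_+(\E'))| \subset \bigcup_i |\mathrm{Car}(\H^i f_+(\E'))|$ where each $\H^i f_+(\E')$ is built from the cohomology modules of $\E'$; since $\mathscr{T}_f$ commutes with finite unions and is monotone, it suffices to bound $|\mathrm{Car}(\H^i f_+(\E'))|$ by $\mathscr{T}_f(|\mathrm{Car}(\E')|)$ for a coherent module $\E'$ with Frobenius structure and each $i$.

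Next I would descend the level. By Berthelot's equivalence \cite[4.5.4]{Be2}, a coherent $F\text{-}\D^\dag_{\PP',\Q}$-module $(\E',\phi)$ comes, uniquely up to isomorphism, from a coherent $\widehat{\D}^{(0)}_{\PP',\Q}$-module $\FF'^{(0)}$ together with an isomorphism $\phi^{(0)} \colon \widehat{\D}^{(1)}_{\PP',\Q} \otimes_{\widehat{\D}^{(0)}_{\PP',\Q}} \FF'^{(0)} \riso F^*\FF'^{(0)}$. Using the compatibility isomorphisms \cite[3.5.3.1]{Beintro2} and \cite[3.5.4.1]{Beintro2} (base change through $F^*$ and commutation of $f_+$ with $F^*$), one gets $\mathrm{Car}(\H^i f_+(\E')) = \mathrm{Car}^{(0)}(\H^i f_{+^{(0)}}(\FF'^{(0)}))$, so the characteristic variety of the $\dag$-level pushforward is computed entirely at level $0$. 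This identification together with Berthelot's definition $\mathrm{Car}(\E') = \mathrm{Car}^{(0)}(\overline{\E}'^{(0)})$ is what makes the whole reduction possible.

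Then I would pass from the $p$-adically complete level-$0$ theory to the purely characteristic-$p$ setting of Proposition \ref{carf+redu}. Choosing a coherent $\widehat{\D}^{(0)}_{\PP'}$-module $\E'^{(0)}$ without $p$-torsion with $\E'^{(0)}_\Q \riso \FF'^{(0)}$, and setting $\overline{\E}'^{(0)} := k \otimes_\V \E'^{(0)}$, the successive inclusions
$$|\mathrm{Car}(\H^i f_+(\E'))| \subset |\mathrm{Car}^{(0)}(\H^i f_{+^{(0)}}(\E'^{(0)}))| \subset |\mathrm{Car}^{(0)}(\H^i(k \otimes^{\L}_\V f_{+^{(0)}}(\E'^{(0)})))| = |\mathrm{Car}^{(0)}(\H^i f_{+^{(0)}}(\overline{\E}'^{(0)}))|$$
are obtained from, respectively, the passage to the quotient by $p$-torsion and the definition \cite[5.2.5]{Beintro2}; the monomorphism $k \otimes_\V \H^i f_{+^{(0)}}(\E'^{(0)}) \hookrightarrow \H^i(k \otimes^{\L}_\V f_{+^{(0)}}(\E'^{(0)}))$ coming from the universal-coefficients spectral sequence (as in the proof of \cite[I.5.8]{virrion}); and the base-change isomorphism $k \otimes^{\L}_\V f_{+^{(0)}}(\E'^{(0)}) \riso f_{+^{(0)}}(\overline{\E}'^{(0)})$. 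Since $f$ is proper, the restriction of $\varpi_f$ to $\rho_f^{-1}(|\mathrm{Car}^{(0)}(\overline{\E}'^{(0)})|)$ is proper, so Proposition \ref{carf+redu} applies and gives $|\mathrm{Car}^{(0)}(\H^i f_{+^{(0)}}(\overline{\E}'^{(0)}))| \subset \mathscr{T}_f(|\mathrm{Car}^{(0)}(\overline{\E}'^{(0)})|) = \mathscr{T}_f(|\mathrm{Car}(\E')|)$, which is the claim. The only genuine subtlety — and the step I would be most careful about — is the bookkeeping of $p$-torsion and the passage to mod-$p$ coefficients: one must check that quotienting by $p$-torsion can only shrink the level-$0$ characteristic variety and that the universal-coefficients monomorphism is compatible with the filtrations defining $\mathrm{Car}^{(0)}$; everything else is a formal chain of isomorphisms and the invocation of the already-proven level-$0$ result.
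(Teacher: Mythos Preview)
Your proposal is correct and follows the paper's own argument essentially verbatim: the paper's proof is precisely the unnumbered paragraph \ref{0todagf_+} you cite, carrying out the descent via Berthelot's equivalence \cite[4.5.4]{Be2}, the choice of a $p$-torsion-free level-$0$ model, the universal-coefficients monomorphism, and the invocation of Proposition \ref{carf+redu}. The only thing you add is the explicit d\'evissage from a complex to a single module via the hypercohomology spectral sequence, which the paper leaves tacit (the paragraph begins with $(\E',\phi)$ a module and proves the bound for each $\H^i f_+(\E')$).
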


\begin{empt}
\label{Carf!+finetformal}
Let $f\colon \PP ' \to \PP$ be a finite étale surjective morphism of smooth formal $\V$-schemes. 

\begin{enumerate}
\item Let $(\E ',\phi)$ be a coherent $F\text{-}\D ^{\dag} _{\PP',\Q}$-module.
With the notation \ref{nota-formalCar-finite-etal},
since $f _{+ ^{(0)}}= f _*$ (and then preserves the property of $p$-torsion freeness) and $f _+ = f  _*$, the inclusion 
\ref{0todagf_+} is an equality. In fact, with the usual notation of characteristic cycles (see \cite[5.4]{Beintro2}),
we get the equality 
\begin{equation}
\label{1281}
Z \mathrm{Car} (f _{+ } (\E '))
=
Z\mathrm{Car} ^{(0)} ( f _{+ ^{(0)}} (\overline{\E} ^{\prime (0)})).
\end{equation}

\item Moreover, 
let $(\E ,\phi)$ be a coherent $F\text{-}\D ^{\dag} _{\PP,\Q}$-module.
From the equivalence of categories of \cite[4.5.4]{Be2}, there exist (unique up to isomorphism) 
a coherent $\widehat{\D} ^{(0)} _{\PP ,\Q}$-module 
$\FF ^{(0)}$ and an isomorphism 
$\phi ^{(0)}\colon \widehat{\D} ^{(s)} _{\PP ,\Q} \otimes _{\widehat{\D} ^{(0)} _{\PP ,\Q}} 
\FF ^{(0)}
\riso 
F ^{*}\FF ^{(0)}$
which induces  $(\E ,\phi)$ by extension.
Choose a coherent $\widehat{\D} ^{(0)} _{\PP}$-module without $p$-torsion 
$\E ^{ (0)}$ such that 
$\FF ^{ (0)} \riso \E ^{ (0)} _\Q$.
Since $\D ^{\dag} _{\PP',\Q}  = f ^*\D ^{\dag} _{\PP,\Q}= 
f ^{! } \D ^{\dag} _{\PP,\Q}$
and 
$\widehat{\D} ^{(0)} _{\PP'} = f ^* \widehat{\D} ^{(0)} _{\PP}= 
f ^{! ^{(0)}} \widehat{\D} ^{(0)} _{\PP}$, 
we check that 
$f ^{! ^{(0)}} \E ^{(0)}= f ^{*} \E ^{(0)}$ has no $p$-torsion and 
that 
$ f ^{!} (\E )\riso 
\D ^{\dag} _{\PP',\Q}
\otimes _{\widehat{\D} ^{(0)} _{\PP '}} 
f ^{! ^{(0)}} \E ^{(0)}$. Moreover, 
putting 
$\overline{\E} ^{ (0)}: = k \otimes _{\V} \E ^{(0)}$, 
we get
\begin{equation}
\label{0todagf^!}
Z \mathrm{Car} ( f ^{!} (\E ))
=
Z \mathrm{Car} (f ^{! ^{(0)}} \overline{\E} ^{(0)}).
\end{equation}

\end{enumerate}

\end{empt}

\begin{prop}
\label{chi=d-times-chi}
Let $f\colon \PP ' \to \PP$ be a finite étale surjective morphism of degree $d$ of integral smooth formal $\V$-schemes. 
Let $(\E ,\phi)$ be a coherent $F\text{-}\D ^{\dag} _{\PP,\Q}$-module.
Then we get 
\begin{gather}
\label{ZCar-equality}
Z \mathrm{Car} (f _{+} f ^{!} \E) 
=
d
Z \mathrm{Car} (\E); 
\\
\label{chi-fetFormula}
\chi (\PP, f _{+} f ^{!} (\E))
= d\cdot \chi (\PP ,\E ).
\end{gather}

\end{prop}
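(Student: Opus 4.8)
The plan is to reduce the statement for $F\text{-}\D^{\dag}_{\PP,\Q}$-modules to the corresponding statement at level $0$, which in turn reduces to the already established Corollary \ref{Carf!f_+finet} about the functors $f^{!^{(0)}}$ and $f_{+^{(0)}}$ on $D^{\mathrm{b}}_{\mathrm{coh}}(\D^{(0)})$ over $k$-varieties. Concretely, starting from $\E$, pick as in \ref{Carf!+finetformal} a coherent $\widehat{\D}^{(0)}_{\PP}$-module $\E^{(0)}$ without $p$-torsion realizing the level-$0$ part of $\E$, and set $\overline{\E}^{(0)} := k \otimes_{\V} \E^{(0)}$. By \ref{0todagf^!} we have $Z\mathrm{Car}(f^{!}(\E)) = Z\mathrm{Car}(f^{!^{(0)}}\overline{\E}^{(0)})$, and since $f$ is finite étale $f^{!^{(0)}}\E^{(0)} = f^{*}\E^{(0)}$ is again a coherent $p$-torsion-free $\widehat{\D}^{(0)}_{\PP'}$-module, so by \ref{Carf!+finetformal}(1) applied to $(\E',\phi') := f^{!}(\E)$ (whose level-$0$ model is $f^{*}\E^{(0)}$, using $f_{+^{(0)}}=f_*$, which preserves $p$-torsion freeness) we get $Z\mathrm{Car}(f_{+}f^{!}(\E)) = Z\mathrm{Car}^{(0)}(f_{+^{(0)}}f^{!^{(0)}}\overline{\E}^{(0)})$.

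At this point I would invoke directly the equality \ref{Carf!f_+finet22} of Corollary \ref{Carf!f_+finet}, i.e. $Z\mathrm{Car}^{(0)}(f_{+^{(0)}}f^{!^{(0)}}(\overline{\FF})) = d\, Z\mathrm{Car}^{(0)}(\overline{\FF})$ with $\overline{\FF} := \overline{\E}^{(0)}$, together with the identity $Z\mathrm{Car}(\E) = Z\mathrm{Car}^{(0)}(\overline{\E}^{(0)})$ (Berthelot's definition of the characteristic cycle, \cite[5.2.5, 5.4]{Beintro2}, as already used in the paragraph preceding \ref{dag-carf+redu}). Combining, $Z\mathrm{Car}(f_{+}f^{!}(\E)) = d\, Z\mathrm{Car}^{(0)}(\overline{\E}^{(0)}) = d\, Z\mathrm{Car}(\E)$, which is \ref{ZCar-equality}. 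One small point to check carefully: Corollary \ref{Carf!f_+finet} is stated over smooth $k$-varieties, so I would note that the special fibre $f\colon \PP'\to\PP$ of a finite étale surjective morphism of integral smooth formal $\V$-schemes is itself a finite étale surjective morphism of integral smooth $k$-varieties of the same degree $d$, and that all the characteristic-cycle computations above take place on the special fibre, so \ref{Carf!f_+finet22} applies verbatim. The passage through \ref{carf+redu}-type properness hypotheses is automatic here since $f$ is finite, hence proper, and $\rho_f$ is an isomorphism.

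For the Euler characteristic formula \ref{chi-fetFormula}, the plan is to deduce it from \ref{ZCar-equality} via the index formula expressing $\chi(\PP,-)$ in terms of the characteristic cycle. More precisely, $\chi(\PP, \M)$ for $\M \in F\text{-}D^{\mathrm{b}}_{\mathrm{coh}}(\D^{\dag}_{\PP,\Q})$ depends only on $Z\mathrm{Car}(\M)$ — it is a linear functional on characteristic cycles, given by pairing with the zero section (Dubson–Kashiwara type index formula in the arithmetic $\D$-module setting). Hence $\chi(\PP, f_{+}f^{!}(\E))$ depends $\Z$-linearly on $Z\mathrm{Car}(f_{+}f^{!}(\E)) = d\,Z\mathrm{Car}(\E)$, giving immediately $\chi(\PP, f_{+}f^{!}(\E)) = d\cdot\chi(\PP,\E)$. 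Alternatively, and perhaps more cleanly for this paper, one can argue directly: $p_{\PP'\,+} = p_{\PP\,+}\circ f_{+}$ since $p_{\PP'} = p_{\PP}\circ f$, so $\chi(\PP, f_{+}f^{!}(\E)) = \chi(\PP', f^{!}(\E))$, and then since $f$ is finite étale of degree $d$ one has a trace/transfer decomposition $f_{+}f^{!}(\E) \cong \E^{\oplus d}$ after a suitable étale base change (where $f$ becomes a disjoint union of $d$ copies of the identity), and $\chi$ is insensitive to such a base change on the source; this yields $\chi(\PP',f^{!}\E) = d\cdot\chi(\PP,\E)$ directly. I would present the argument via \ref{ZCar-equality} and the index formula as the primary route.

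The main obstacle I anticipate is purely bookkeeping rather than conceptual: one must be scrupulous about the compatibility of the level-$0$ reductions with Frobenius structures (to stay inside $F\text{-}D^{\mathrm{b}}_{\mathrm{coh}}$, so that the equivalence \cite[4.5.4]{Be2} applies and the characteristic cycle is well-defined via Berthelot's \cite[5.2.5]{Beintro2}), and about the fact that $f_{+^{(0)}}=f_{*}$ for finite étale $f$ preserves $p$-torsion-freeness so that the level-$0$ model of $f^{!}(\E)$ is literally $f^{*}\E^{(0)}$ with no further quotient by $p$-torsion needed — this is exactly what makes \ref{0todagf_+} an equality (not just an inclusion) in \ref{Carf!+finetformal}(1). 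Once these points are laid out, both equalities follow formally from Corollary \ref{Carf!f_+finet} and the index formula.
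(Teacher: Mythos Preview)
Your proposal is correct and follows essentially the same route as the paper: reduce the characteristic cycle equality to level~$0$ via \ref{Carf!+finetformal} and then invoke \ref{Carf!f_+finet22}, and deduce the Euler characteristic formula from \ref{ZCar-equality} via Berthelot's index theorem \cite[5.4.4]{Beintro2}. The paper's proof is a two-line version of exactly this; your additional bookkeeping remarks (about $p$-torsion freeness and the degree of the special fibre) are the content implicit in the paper's citation of \ref{Carf!+finetformal}.
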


\begin{proof}
From the equality of characteristic cycles \ref{Carf!f_+finet22} and both equalities of the paragraph \ref{Carf!+finetformal}, we get 
$Z \mathrm{Car} (f _{+} f ^{!} \E) 
=
d
Z \mathrm{Car} (\E)$.
Moreover, the equality \ref{chi-fetFormula} is a consequence of \ref{ZCar-equality} and of Berthelot's index theorem \cite[5.4.4]{Beintro2}.
\end{proof}

We will need at the end of the proof of Theorem \ref{lem-loc-acyc} the following Lemma.
\begin{lem}
\label{lem-O-coh-Car}
Let $\X$ be a smooth $\V$-formal scheme, $X $ be the reduction of $\X$ modulo $\pi $.
\begin{enumerate}
\item Let $\G$ be a coherent $\D ^{(0)} _{X}$-module. 
Choose a good filtration $(\G _n) _{n\in \N}$ of $\G$. 
Then the following assertions are equivalent
\begin{enumerate}
\item $\mathrm{Car} ^{(0)} (\G)  \subset T ^* _X X$. 

\item $\gr \G $ is $\O _X$-coherent (for the $\O _X$-module structure induced by 
$\O _X \hookrightarrow \gr \D ^{(0)} _{X}$).

\item $\G$ is $\O _X$-coherent (for the $\O _X$-module structure induced by 
$\O _X \hookrightarrow \D ^{(0)} _{X}$).
\end{enumerate}

\item Let $(\E ,\phi)$ be a coherent $F\text{-}\D ^{\dag} _{\X,\Q}$-module.
The following assertions are equivalent. 
\begin{enumerate}
\item $\mathrm{Car} (\E) \subset T ^* _X X$. 

\item $\E$ is $\O _{\X,\Q}$-coherent (for the $\O _{\X,\Q}$-module structure induced by 
$\O _{\X,\Q} \hookrightarrow \D ^{\dag} _{\X,\Q}$).

\end{enumerate}

\end{enumerate}
\end{lem}

\begin{proof}
Let us check the first part. 
Let us check that $(a)$ implies $(b)$. 
Since this is local, we can suppose that $X$ affine with  local coordinates
$t _1, \dots, t _d$.
Let $\xi _i$ be the global section of 
$\gr \D ^{(0)} _{X}$ which is the element associated  with
$\partial _i$, the derivation with respect to $t _i$. 
Since the ideal defining the closed immersion $\mathrm{Car} ^{(0)} (\G)\hookrightarrow T ^*  X$ is 
the radical of the annihilator of $\gr \G$,  
the inclusion $\mathrm{Car} ^{(0)} (\G)  \subset T ^* _X X$ implies that 
$\xi _{1} ^N, \dots, \xi _d ^N$ annihilate $\gr \G$ for some integer $N$ large enough. 
Hence, $\gr \G$ is a coherent $\gr \D ^{(0)} _{X} /(\xi _1,\dots, \xi _d) ^{Nd}$-module. 
Since $\gr \D ^{(0)} _{X} /(\xi _1,\dots, \xi _d) ^{Nd}$ is a finite $\O _X$-algebra 
(via the compostion $\O _X \to \gr \D ^{(0)} _{X} \to \gr \D ^{(0)} _{X} /(\xi _1,\dots, \xi _d) ^{Nd}$), we conclude
that $\gr \G$ is $\O _X$-coherent. 
Now, suppose $(b)$ satisfied. Then, by definition of a good filtration, 
this implies that $\G _n = \G$ for $n$ large enough. Hence, $\G$ is $\O _X$-module. 
Finally, suppose $(c)$. Then, the constant filtration $(\G _n=\G) _{n\in \N}$ is a good filtration (it might be more convenient to complete the filtration 
by  $\G _{n} =0$ if $n <0$). 
Then the action of  $\xi _i$ on $\gr \G =\G _0 /\G _{-1} =\G$ is zero (because the action of $\xi _i$ is induced
by maps of the form  $\G _i /\G _{i-1} \to \G _{i+1} /\G _{i}$, which are zero). 
Hence, $\mathrm{Car} ^{(0)} (\G)  \subset T ^* _X X$ (recall that the construction of 
$\mathrm{Car} ^{(0)} (\G) $ does not depend on the choice of the good filtration). 

Now, we deduce the second part from the first one. 
Let $\E ^{(0)}$ be the coherent $\widehat{\D} _{\X,\Q} ^{(0)}$-module
endowed with an isomorphism 
$\phi ^{(0)}\colon \widehat{\D} _{\X,\Q} ^{(s)} \otimes _{\widehat{\D} _{\X,\Q} ^{(0)}}\E ^{(0)} 
\riso F ^* \E ^{(0)}$
which induces canonically $\phi$.
Choose 
a coherent $\widehat{\D} _{\X} ^{(0)}$-module $\smash{\overset{\circ}{\E}} ^{(0)} $without $p$-torsion endowed with 
the isomorphism of the form
$\smash{\overset{\circ}{\E}} ^{(0)} _\Q \riso \E ^{(0)}$.
If $\E$ is $\O _{\X,\Q}$-coherent, then $\E ^{(0)}$ is $\O _{\X,\Q}$-coherent
(see \cite[2.2.14]{caro_courbe-nouveau})
Then, from \cite[3.1.3]{Be0}, we can choose $\smash{\overset{\circ}{\E}} ^{(0)} $ so that 
it is $\O _{\X}$-coherent.
Since $\mathrm{Car} (\smash{\overset{\circ}{\E}} ^{(0)}/\pi \smash{\overset{\circ}{\E}} ^{(0)}) 
= \mathrm{Car} (\E) $ and 
$\smash{\overset{\circ}{\E}} ^{(0)}/\pi \smash{\overset{\circ}{\E}} ^{(0)} $ is $\O _X$-coherent, this yields  from the first part
the inclusion
$\mathrm{Car} (\E) \subset T ^* _X X$.
Conversely, suppose $\mathrm{Car} (\E) \subset T ^* _X X$.
Then, from the first part 
$\smash{\overset{\circ}{\E}} ^{(0)}/\pi \smash{\overset{\circ}{\E}} ^{(0)} $ is $\O _X$-coherent. 
Hence, $\smash{\overset{\circ}{\E}} ^{(0)}$
 is $\O _\X$-coherent, which yields that $\E ^{(0)}$ is $\O _{\X,\Q}$-coherent. 
 With \cite[2.2.14]{caro_courbe-nouveau}, this implies that 
$ \E$ is $\O _{\X,\Q}$-coherent.
\end{proof}

\subsection{Generic smoothness up to Frobenius descent}

We prove below  in this section Proposition \ref{corolem-loc-smooth} which 
states that, up to some Frobenius descent (see Lemma \ref{FrobO-coh desc}),  
a morphism is generically smooth. This will be useful later
in the proof of Theorem \ref{lem-loc-acyc}.

\begin{empt}
[Universal homeomorphism]
\label{univ-homeo}
Let $f\colon X \to Y$ be a morphism of schemes. 

\begin{enumerate}
\item Following Definitions \cite[3.5.4]{EGAI} (and Remark \cite[3.5.11]{EGAI})
or \cite[2.4.2]{EGAIV2}, 
$f$ is by definition a universal homeomorphism (resp. is universally injective) if
for any morphism of schemes $g \colon Y ' \to Y$, the morphism 
$f _{Y'}\colon X \times _{Y} Y' \to Y'$ is a homeomorphism (resp. is injective). 

\item Some authors use the name of ``purely inseparable'' (e.g. \cite[5.3.13]{Liu-livre-02}) or ``radicial'' (e.g. \cite[3.5.4]{EGAI}) 
instead of ``universally injective''. From Definition \cite[3.5.4]{EGAI}, Proposition \cite[3.5.8]{EGAI} and Remark 
\cite[3.5.11]{EGAI}, the following conditions are equivalent : 
\begin{enumerate}[(a)]
\item $f$ is universally injective ; 
\item for any field $K$, the map $X (K) \to Y (K)$ is injective ; 
\item $f$ is injective and for any point $x$ of $X$ the monomorphism of the 
residue fields $k (f(x)) \to k (x)$ induced by $f$ is purely inseparable (some authors say ``radicial'' instead of ``purely inseparable''). 
\end{enumerate}

\item Suppose now that $f\colon X \to Y$ is a morphism of $k$-varieties.
Using Proposition \cite[2.4.5]{EGAIV2}, we check that $f$ is a universal homeomorphism if and only if 
$f$ is finite, surjectif and radicial. 
\end{enumerate}

\end{empt}

\begin{lem}
\label{Frobe-univ-homeo}
Let $X$ be a $k$-variety. Then the relative Frobenius
$F ^s _{X/k} \colon X \to X ^{\sigma}$,
the morphism $F ^s _{k} \colon  X ^{\sigma} \to X$
(induced from $F ^s _{k}$ by base change)
and the absolute Frobenius morphism 
$F ^s _{X/k} \colon X \to X $ (equal to $F ^s _{X} = F ^s _{k} \circ F ^s _{X/k} $)
are universal homeomorphisms. 
\end{lem}

\begin{proof}
From the characterization \ref{univ-homeo}.3, 
$F ^s _{k} \colon  \Spec k \to \Spec k$ is a universal homeomorphism. 
Hence, by stability of this property by base change 
we get that
$F ^s _{k} \colon  X ^{\sigma} \to X$ is a universal homeomorphism.
From Lemma \cite[3.2.25]{Liu-livre-02},
we check  that $F ^s _{X/k}$ is finite. 
Hence, so is by composition $F ^s _{X} $.
Since $F ^s _{X} $ induces the identity on the underlying topological space,
$F ^s _{X} $ is bijective. Moreover, the monomorphism of 
the residue fields $k (x) \to k(x)$ 
induced by $F ^s _{X} $ is the $s$th power of the Frobenius, hence it is radicial. 
From \ref{univ-homeo}.2.(c), this yields that $F ^s _{X} $ is radicial. 
From \ref{univ-homeo}.2.(b), this implies that $F ^s _{X/k}$ is also radicial.
With the characterization \ref{univ-homeo}.3, we get that 
$F ^s _{X/k}$ and $F ^s _{X} $ are universal homeomorphisms.
\end{proof}

\begin{dfn}
Let us clarify some terminology. 
Let $f\colon X \to Y$ be a smooth morphism of schemes.
Let $Z$ be a closed subscheme of $X$. 
We say that $Z$ is ``a strict normal crossing divisor relatively to $Y$'' (via $f$) if
for any point $x \in Z$, 
there exists an open affine set $V$ of $Y$ containing $y:= f (x)$,
there exists an open affine set $U$ of $X$ containing $x$ and included in $f ^{-1} (V)$,
there exists an etale $V$-morphism of the form $U \to \A ^{n}_{V} $ given by 
global sections $t _1,\dots, t _n$ such that 
$Z\cap U = V ( t _1 \cdots t _r)$ for some integer $r$.

Remark : When $Y$ is of the form $\Spec K$, with $K$ a perfect field, 
a ``strict normal crossing divisor of $X$ relatively to $Y$'' is the same than 
``a strict normal crossing divisor of $X$'' (the latter is an absolute notion only depending on $X$),
which might justify the terminology.
\end{dfn}

The following lemma is straightforward. 
\begin{lem}
\label{SNCDlem0}
Let $f\colon X \to Y$ be a smooth morphism of schemes.
Let $Z$ be a closed subscheme of $X$. 
Let $g \colon Y' \to Y$ be morphism of schemes, 
$X ':= X \times _{Y} Y'$, 
$Z ':= Z \times _{Y} Y'$.
If $Z$ is a strict normal crossing divisor of $X$ relatively to $Y$
then 
$Z'$ is a strict normal crossing divisor of $X'$ relatively to $Y'$.
\end{lem}

\begin{lem}
\label{SNCDlem1}
Let $f\colon X \to Y$ be a smooth morphism of smooth $k$-varieties with $Y$ integral.
Let $Z$ be a closed subvariety of $X$. 
Let $\eta$ be the generic point of $Y$, $k(\eta)$ be the function field of $Y$,
$X _\eta:= X \times _{Y}  \Spec k(\eta) $, $Z _\eta:= Z \times _{Y} \Spec k (\eta)$.
If $Z _\eta$ is a strict normal crossing divisor of $X _\eta$ relatively to $ \Spec k (\eta)$ then there exists
a dense open set $V$ of $Y$ such that, setting
$X _V := f ^{-1} (V)$ and $Z _V := Z \cap X _V $, the closed subvariety $Z _V$ of $X _V$ is a strict normal crossing  
of $X _V$ relatively to $V$.
\end{lem}

\begin{proof}
We can suppose $X$ integral.
By definition, there exists a covering 
$U _{1,\eta}, \dots, U _{m, \eta}$ by open affine $k(\eta)$-subvarieties of $X _\eta$ such that 
there exists a $k(\eta)$-morphism 
$ U _{i,\eta}\to \A ^{n} _{\eta}$ given by global section 
$t _{i,1},\dots, t _{i,n}$.
Consider the projective system of open affine dense $k$-subvarieties of $Y$ and remark that 
$\Spec k (\eta)$ is the projective limit of this system.
For any open affine $k$-subvariety $V$ of $Y$, 
put $X _V := f ^{-1} (V)$ and $Z _V := Z \cap X _V $.
By using \cite[8.8.2.(ii)]{EGAIV3}, there exists an open affine $k$-subvariety $V$ of $Y$ such that 
there exist a scheme $U _i$ of finite type over 
$V$ (recall that from \cite[1.6]{EGAIV1} to be of finite type or of finite presentation over a locally noetherian scheme 
is the same) and some $k (\eta)$-isomorphism
$U _i \times _{V} \Spec k (\eta)\riso U _{i,\eta}$ for any $i$.
By using Theorem \cite[8.8.2.(i)]{EGAIV3} and Theorem \cite[8.10.5.(iii)]{EGAIV3}, 
shrinking $V$ if necessary, 
we can suppose that there exists an open immersion 
$U _i \hookrightarrow X _V$ which induces
(via the isomorphism $U _i \times _{V} \Spec k (\eta)\riso U _{i,\eta}$)
the open immersion 
$U _{i,\eta} \hookrightarrow X _\eta$.
 By using Theorem \cite[8.10.5.(vi)]{EGAIV3}, we can suppose that 
$(U _i ) _{i=1,\dots, m}$ is an open covering of $X _V$.
By using Theorem \cite[8.8.2.(i)]{EGAIV3} and Theorem \cite[17.7.8]{EGAIV4}, 
shrinking $V$ is necessary, 
there exists an etale $V$-morphism
of the form
$U _i \to \A ^{n} _{V} $ which induces 
the etale $k(\eta)$-morphism $ U _{i,\eta}\to \A ^{n} _{\eta}$.

\end{proof}

\begin{lem}
\label{SNCDlem2}
Let $L/l$ be an algebraic extension of fields of characteristic $p$
such that $L$ is perfect. 
Let $X$ be a smooth variety over $l$, 
$Z$ be closed subvariety of $X$. 
If $ (Z \times _{\Spec l} \Spec L) _{\red}$
is a strict normal crossing divisor of 
$X \times _{\Spec l} \Spec L$
then there exists a finite extension $l'$ of $l$ included in $L$  
such that 
$ (Z \times _{\Spec l} \Spec l') _{\red}$
is a strict normal crossing divisor of 
$X \times _{\Spec l} \Spec l'$ relatively to $\Spec l'$.

\end{lem}

\begin{proof}
For any finite extension $l'$ of $l$ included in $L$,
set 
$ Z _{(l')}:= Z \times _{\Spec l} \Spec l'$,
$X _{(l')}:= X \times _{\Spec l} \Spec l'$
and
$Z ' _{(L)}:=  (Z \times _{\Spec l} \Spec L) _{\red}$.
By using \cite[8.8.2.(ii)]{EGAIV3},
there exists a finite extension $l'$ of $l$ included in $L$  
such that there exist a $l'$-scheme of finite type $Z' _{(l')}$ 
satisfying 
$Z '_{(L)} \riso Z ' _{(l')} \times _{\Spec (l')} \Spec (L)$.
From \cite[8.7.2]{EGAIV3}
we get that $Z ' _{(l')}$ is reduced.
Using \cite[8.8.2.(i)]{EGAIV3} and \cite[8.10.5.(iv) and (vi)]{EGAIV3},
increasing $l'$ if necessary, 
there exist a surjective closed immersion 
$Z ' _{(l')} \hookrightarrow Z _{(l')}$ inducing by extension
$Z ' _{(L)} \hookrightarrow Z _{(L)}$.
Since $Z ' _{(l')}$ is reduced, we get 
$Z ' _{(l')} :=  Z _{(l')\red}$.
Increasing $l'$ if necessary, proceeding as in the proof of 
\ref{SNCDlem1}, 
we check     
that $Z ' _{(l')}$ is a strict normal crossing divisor of 
$X _{(l')}$ relatively to $\Spec l'$.
\end{proof}

\begin{lem}
\label{lem-loc-smooth}
Let $a \colon X \to P$ be a dominant morphism of smooth integral $k$-varieties. 
Let $Z \hookrightarrow X$ be a proper closed subset.
Then there exist a dense  open subvariety $U$ of $P$, a universal homeomorphism $g \colon U' \to U$ of $k$-varieties 
with $U'$ normal, 
a projective, generically finite and etale $U'$-morphism of the form 
$f \colon \widetilde{V}' \to  (X \times _{P} U') _{\mathrm{red}}$ such that $\widetilde{V}'$ is integral and 
$\widetilde{V}'$ is smooth over $U'$, 
$f ^{-1} (Z \times _{P} U' ) _{\mathrm{red}}$ is the support of a strict normal crossing divisor in $\widetilde{V}'$ relatively to 
$U'$.
\end{lem}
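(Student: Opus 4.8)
The plan is to establish the statement in several stages, reducing progressively to a situation where de Jong's alterations and standard resolution of singularities over the generic point apply. Since $a\colon X\to P$ is dominant between smooth integral $k$-varieties, generic flatness gives a dense open $U_0\subset P$ over which $a$ is flat (even of relative dimension $\dim X-\dim P$), and shrinking further we may assume every fiber of $X\times_P U_0\to U_0$ is purely of that dimension and that $Z\times_P U_0\to U_0$ is also flat with fibers of the expected dimension. First I would replace $P$ by such a $U_0$, so that the geometry over the generic point $\eta$ of $P$ controls the geometry over a dense open.

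Next I would work over the generic point. Over the function field $k(P)=\kappa(\eta)$, the scheme $X_\eta:=(X\times_P\eta)_{\mathrm{red}}$ is a variety, possibly non-smooth and non-geometrically-reduced because $k(P)$ is imperfect in characteristic $p$. Here the universal homeomorphism $g\colon U'\to U$ enters: by passing to a finite purely inseparable extension of $k(P)$ (concretely, adjoining $p$-power roots of finitely many elements, which corresponds to a universal homeomorphism of a dense open $U'$ of a suitable model, with $U'$ normal after normalization), one can make the generic fiber geometrically reduced. This is the standard trick for handling imperfect base fields; I would invoke it to arrange that $(X\times_P U')_{\mathrm{red}}$ has geometrically reduced generic fiber over $U'$. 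Then I apply de Jong's theorem on alterations (in the form giving a projective alteration, which over the generic point can be taken generically finite, and can be arranged separable hence generically étale after the inseparable base change already performed) to obtain a projective generically finite generically étale morphism $f\colon\widetilde V'\to (X\times_P U')_{\mathrm{red}}$ with $\widetilde V'$ integral and smooth over $\kappa(\eta')$, and such that the preimage of $Z$ is a strict normal crossings divisor. All of this happens over the generic point.

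The final stage is to spread out: every property asserted at the generic point of $U'$ — smoothness of $\widetilde V'\to U'$, the morphism $f$ being projective, generically finite, étale over a dense open, integrality of $\widetilde V'$, and $f^{-1}(Z\times_P U')_{\mathrm{red}}$ being the support of a strict normal crossings divisor relative to $U'$ — is a property that holds over some dense open of $U'$ once it holds at the generic point, by the usual limit/constructibility arguments (EGA IV, §8–9). Shrinking $U'$, hence $U$, accordingly and taking $U$ dense open in $P$ finishes the argument.

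The main obstacle is the interplay between the imperfection of $k(P)$ and the requirement that $f$ be étale (not merely generically finite) over a dense open: de Jong's alteration is only generically finite, and making it generically \emph{étale} forces one to first render the generic fiber geometrically reduced, which is precisely why the universal homeomorphism $g$ and the normal (rather than smooth) model $U'$ appear in the statement. I expect the bookkeeping of this inseparable base change — producing $U'$ normal, controlling that $g$ is a universal homeomorphism, and ensuring de Jong's alteration can then be taken separable — to be the technically delicate point; the spreading-out and the normal-crossings bookkeeping are routine by comparison.
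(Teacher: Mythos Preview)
Your outline is correct and matches the paper's strategy: work at the generic point, introduce a purely inseparable base change to tame the imperfection of $k(P)$, apply de~Jong, then spread out via EGA~IV~\S8. The one point where the paper is sharper than your sketch is precisely the ``technically delicate point'' you flag. You propose to pass to a finite purely inseparable extension making the generic fibre geometrically reduced, and then invoke de~Jong hoping the alteration can be taken generically \'etale with source smooth over that (still imperfect) field. Geometric reducedness alone does not obviously buy you this: de~Jong over an imperfect field may require a further (possibly separable) field extension to get smoothness of the source, which would destroy the universal-homeomorphism property of $g$.

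The paper sidesteps this by going all the way to the perfect closure $L=\overline{l}^{\,\mathrm{Gal}(\overline{l}/l)}$ of $l=k(P)$. Over the perfect field $L$, de~Jong's theorem yields directly a projective, generically finite and \'etale morphism $Y'_{(L)}\to (X_{(L)})_{\mathrm{red}}$ with $Y'_{(L)}$ smooth over $L$ and SNC preimage of $Z$. One then descends everything (the schemes, the morphism, smoothness, projectivity, generic \'etaleness, the SNC condition) to a finite subextension $l'\subset L$ by the standard limit arguments of EGA~IV~\S8, and takes $U'$ to be a dense open in the normalization of $P$ in $l'$. Since $l'/l$ is purely inseparable, $U'\to U$ is a universal homeomorphism. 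So the purely inseparable extension appears not as a preliminary step before de~Jong, but as the descent datum afterwards; this is what makes the generically-\'etale requirement come for free.
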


\begin{proof}
1) Let $l$ be the field of fractions of $P$ and 
$\overline{l}$ be an algebraic closure of $l$, 
$L:= \smash{\overline{l}} ^{Gal (\overline{l}/l)}$ the fixed field by $Gal (\overline{l}/l)$.
Following \cite[V.6.11]{Lang-Algebra},  $L$ is perfect (in other words, since $\overline{l}$ is an algebraic closure of $L$, 
 $\overline{l}/L$ is separable) and $L/l$ is purely inseparable.
We put  $X _{(L)}: =X \times _{P} \Spec (L)$,
$Y _{(L)}: =( X _{(L)} ) _{\mathrm{red}}$ and 
$Z _{(L)}: =Z \times _{P} \Spec (L)$.

Using Theorems \cite[8.4.1]{EGAIV3} and \cite[8.10.5.(v)]{EGAIV3}, we get the 
$X \times _{P} \Spec (l)$ is irreducible and separated
(we use these Theorems in the following context: consider the projective system $( U _S) _S$ of open affine dense subvarieties of $P$
and next consider the projective system $( a ^{-1}(U _S)) _S$ of open integral subvarieties of $X$ whose projective limit is
$X \times _{P} \Spec (l)$). 
Since $X _{(L)} \to X \times _{P} \Spec (l)$ is a universal homeomorphism, 
we get that $X _{(L)}$ is also irreducible and separated. Hence, $Y _{(L)}$ is an integral $L$-variety, with $L$ a perfect field.
From the desingularisation de Jong's theorem (see \cite{dejong} or \cite[4.1]{Ber-alterationdejong}), 
this implies that
there exists a projective, generically finite and etale morphism
$\phi _L \colon Y '_{(L)} \to Y _{(L)}$ such that $Y '_{(L)}$ is integral, smooth over $\Spec L$ and
$Z ' _{(L)}:= \phi _L ^{-1} ( Z _{(L)} ) _{\mathrm{red}} $ is the support of a strict normal crossing divisor in 
$Y' _{(L)}$.

a) By using \cite[8.4.2]{EGAIV3}, \cite[8.7.2]{EGAIV3}, \cite[8.8.2.(ii)]{EGAIV3} and \cite[8.10.5.(v)]{EGAIV3}, 
there exists a finite (radicial) extension $l'$ of $l$ included in $L$  
such that there exist two integral $l'$-varieties $Y _{(l')}$ and $Y' _{(l')}$ 
satisfying 
$Y _{(L)} \riso Y _{(l')} \times _{\Spec (l')} \Spec (L)$
and
$Y' _{(L)} \riso Y' _{(l')} \times _{\Spec (l')} \Spec (L)$.

b) We put $X _{(l')}: =X \times _{P} \Spec (l')$ and $Z _{(l')}: =Z \times _{P} \Spec (l')$.
By increasing $l'$ is necessary, 
it follows from \cite[8.8.2.(i)]{EGAIV3} 
that 
there exists a morphism 
$\phi _{l'} := Y '_{(l')} \to Y _{(l')}$ 
(resp. $Y _{(l')} \to X _{(l')}$)
inducing $\phi _L$ 
(resp. the surjective closed immersion 
$Y _{(L)} \hookrightarrow X _{(L)}$).
By using \cite[17.7.8]{EGAIV4} and \cite[8.10.5]{EGAIV3} and Lemma \ref{SNCDlem2}, 
by increasing $l'$ is necessary, 
we can suppose that $Y _{(l')} \to X _{(l')}$ is a surjective closed immersion (i.e.
$Y _{(l')} = (X _{(l')} ) _{\mathrm{red}}$ since $Y _{(l')}$ is reduced), that
 $\phi _{l'}$ is projective, generically finite and etale morphism, 
 that $Y '_{(l')}$ is smooth over $\Spec l'$ and
$Z ' _{(l')}:= \phi _{l'} ^{-1} (Z _{(l')})  _{\mathrm{red}}$ is the support of a strict normal crossing divisor of 
$Y' _{(l')}$ relatively to $\Spec l'$.

2) Let $P'$ be the normalization of $P$ in $l'$ (see the Definition \cite[4.1.24]{Liu-livre-02}). 
Then the canonical morphism $g\colon P' \to P$ is a universal homeomorphism, i.e. 
is finite (e.g. use \cite[4.1.27]{Liu-livre-02}), surjective and radicial (e.g. use the exercise \cite[5.3.9.(a)]{Liu-livre-02}).
By using \cite[8.8.2.(ii)]{EGAIV3} (this time, we 
consider the projective system of open affine dense subvarieties of $P'$), 
there exists a dense open affine subvariety $U'$ of $P'$, two morphisms
$V ' \to U'$ and $\widetilde{V}' \to U'$ such that 
$Y _{(l')}\riso V'  \times  _{U'} \Spec (l')$
and
$Y '_{(l')}\riso \widetilde{V}'  \times  _{U'} \Spec (l')$.
Since $P'$ is noetherian, 
by using Propositions \cite[8.4.2]{EGAIV3} and \cite[8.7.2]{EGAIV3},
we get that $V'$ and $\widetilde{V}'$ are integral.
Hence, shrinking $U'$ is necessary, we can suppose
$V' =  (X \times _{P} U') _{\mathrm{red}}$.
By shrinking $U'$ is necessary, 
using \cite[8.8.2.(i)]{EGAIV3}, there exists 
a $U'$-morphism 
$f \colon \widetilde{V}' \to V ' $ which induces 
$\phi _{l'}$.
By shrinking $U'$ is necessary, 
by using \cite[17.7.8]{EGAIV4} and \cite[8.10.5]{EGAIV3}
and Lemma \ref{SNCDlem1}, 
we get the desired properties. 
\end{proof}

\begin{lem}
\label{rem-fact-Frob}
Let $g \colon U' \to U$ be a universal homeomorphism of integral $k$-varieties. 
We suppose $U$ normal. 
Then, for $s$ large enough, there exists a unique morphism
$h \colon U \to U ^{\prime \sigma}$ 
making commutative the following diagram
\begin{equation}
\label{rem-fact-Frob-diag}
\xymatrix{
{U '} 
\ar[r] ^-{g}
\ar[d] _-{F ^s _{U'/k}}
& 
{U} 
\ar[d] ^-{F ^s _{U/k}}
\ar[dl] _-{h}
\\ 
{U ^{\prime \sigma} } 
\ar[r] ^-{g ^\sigma}
& 
{U ^{\sigma}.} 
}
\end{equation}
Moreover, this morphism 
$h$ is a universal homeomorphism.

\end{lem}

\begin{proof}
From \ref{Frobe-univ-homeo}, we know that 
$F ^s _{U'/k}$ and 
$F ^s _{U/k}$ are universal homeomorphisms.
Hence, this is sufficient to check that there exist a unique morphism 
$h \colon U \to U ^{\prime \sigma}$
making commutative the diagram \ref{rem-fact-Frob-diag}. 
This is equivalent to check the existence and uniqueness of a morphism
$i \colon U \to U '$ making commutative the diagram
\begin{equation}
\label{rem-fact-Frob-diag2}
\xymatrix{
{U '} 
\ar[r] ^-{g}
\ar[d] _-{F ^s _{U'}}
& 
{U} 
\ar[d] ^-{F ^s _{U}}
\ar[dl] _-{i}
\\ 
{U ^{\prime } } 
\ar[r] ^-{g }
& 
{U .} 
}
\end{equation}
We can suppose $U$ affine. 
We set $U = \Spec A$, $U' = \Spec A'$,
$L:= \mathrm{Frac} A$,
$L':= \mathrm{Frac} A'$.
Since $g$ is surjective, $g ^* \colon A \to A '$ is injective. 
Since $A$ is normal and since $A \to A'$ is finite, 
then 
$A = A ' \cap L$.
For $s$ large enough, we can suppose
$(L') ^{p^s} \subset L$.
Hence the image of  
$F ^{s *} _{U'}\colon A ' \to A '$
is included in $A$. This yields
the desired morphism 
$A ' \to A$. 

\end{proof}

\begin{prop}
\label{corolem-loc-smooth}
Let $a \colon X \to P$ be a dominant morphism of smooth integral $k$-varieties. 
Let $Z \hookrightarrow X$ be a proper closed subset.
Then, for $s$ large enough, 
 there exists a dense open subvariety $U$ of $P$, 
 such that,  putting 
$ W:= (X ^{\sigma} \times _{P  ^{\sigma}} U) _{\mathrm{red}}$
(where $X ^{\sigma} \times _{P  ^{\sigma}} U$ means the base change
of $X ^{\sigma}$ by the composition of $F ^s _{U/k}\colon U \to U  ^{\sigma}$ with the open immersion 
$U  ^{\sigma}\subset P  ^{\sigma}$), there exists
a projective, generically finite and etale $U$-morphism of the form 
$\phi \colon W' \to W$ such that $W'$ is integral and smooth over $U$, 
$Z':= \phi  ^{-1} (Z ^\sigma  \times _{P ^\sigma} U ) _{\mathrm{red}}$ is the support of a strict normal crossing divisor in $W'$ relatively to 
$U$.
\end{prop}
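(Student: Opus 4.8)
The plan is to deduce Proposition \ref{corolem-loc-smooth} from Lemma \ref{lem-loc-smooth} by converting the ``universal homeomorphism $g\colon U'\to U$'' appearing there into a \emph{relative Frobenius}, using Remark \ref{rem-fact-Frob}, and then base-changing the whole picture along that Frobenius. So first I would apply Lemma \ref{lem-loc-smooth} to the dominant morphism $a\colon X\to P$ and the closed subset $Z\hookrightarrow X$: this yields a dense open $U_0\subset P$, a universal homeomorphism $g\colon U'\to U_0$ with $U'$ normal integral, and a projective, generically finite and etale $U'$-morphism $f\colon \widetilde V'\to (X\times_P U')_{\mathrm{red}}$ with $\widetilde V'$ integral and smooth over $U'$, such that $f^{-1}(Z\times_P U')_{\mathrm{red}}$ is a strict normal crossing divisor relatively to $U'$.

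Next I would feed $g$ into Remark \ref{rem-fact-Frob}: for $s$ large enough there is a universal homeomorphism $h\colon U_0\to U'^{\sigma}$ with $g^{\sigma}\circ h=F^s_{U_0/k}\colon U_0\to U_0^{\sigma}$. Now I would take the open subvariety $U:=U_0$ of $P$ promised in the statement (possibly shrinking it once more so that it lands in an open where everything below is defined) and form $W:=(X^{\sigma}\times_{P^{\sigma}}U)_{\mathrm{red}}$, the base change of $X^{\sigma}$ along $F^s_{U/k}\colon U\to U^{\sigma}\subset P^{\sigma}$. The key observation is that base-changing the $U'$-objects $\widetilde V'$ and $f$ along $h\colon U\to U'^{\sigma}$ (i.e. applying $-\times_{U'^{\sigma}}U$ to the $\sigma$-twist of the data from Lemma \ref{lem-loc-smooth}) produces exactly a $U$-morphism of the desired shape: since $g^{\sigma}\circ h=F^s_{U/k}$, we have $\widetilde V'^{\sigma}\times_{U'^{\sigma}}U$ mapping to $(X^{\sigma}\times_{P^{\sigma}}U'^{\sigma})_{\mathrm{red}}\times_{U'^{\sigma}}U=(X^{\sigma}\times_{P^{\sigma}}U)_{\mathrm{red}}=W$ (using that $X^{\sigma}\times_{P^{\sigma}}U'^{\sigma}=(X\times_P U')^{\sigma}$ and that $h$ identifies the base changes along $g^{\sigma}h$ and along $F^s_{U/k}$). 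Setting $W':=\widetilde V'^{\sigma}\times_{U'^{\sigma}}U$ and $\phi$ the induced morphism, the properties ``projective'', ``generically finite and etale'', ``integral'' are preserved because $\sigma$-twist is a base change along a field automorphism and $h$ is a universal homeomorphism (so in particular surjective, radicial and finite, which preserves these properties up to passing to reduced subschemes and shrinking $U$), while ``smooth over $U$'' and ``strict normal crossing divisor relatively to $U$'' are stable under the arbitrary base change $h$.

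The one genuinely delicate point is the smoothness and the relative strict-normal-crossing condition: a universal homeomorphism is \emph{not} flat in general, so base-changing along $h$ could in principle destroy smoothness of $W'$ over $U$ and flatness/transversality of the divisor. The resolution is that smoothness over the base and ``being (the support of) a strict normal crossing divisor relatively to the base'' are properties defined fibrewise plus flatness, and these \emph{are} stable under \emph{arbitrary} base change of the base (one does not need $h$ flat, since $\widetilde V'\to U'$ is already flat — indeed smooth — and flatness, smoothness, and the SNC condition relative to the base are all preserved under any base change on the target). Concretely I would invoke the stability of smoothness under base change (\cite[17.7.8]{EGAIV4} type statements, or simply that $\widetilde V'\to U'$ smooth implies $\widetilde V'\times_{U'}T\to T$ smooth for any $T\to U'$) and the analogous stability for strict normal crossing divisors relative to the base, applied to $T=U\xrightarrow{h}U'^{\sigma}$ together with the already-established compatibility $g^{\sigma}\circ h=F^s_{U/k}$.

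Finally I would note that one may need to shrink $U$ a little at the end (and correspondingly increase $s$) to guarantee $W$, $W'$, $\phi$ and the divisor $Z'$ are simultaneously defined and that $\phi$ is etale and generically finite on the nose rather than just generically — but these are routine applications of \cite[8.8.2]{EGAIV3} and \cite[8.10.5]{EGAIV3} exactly as in the proof of Lemma \ref{lem-loc-smooth}, so I would not dwell on them. The main obstacle to watch, then, is cleanly identifying $(X\times_P U')^{\sigma}\times_{U'^{\sigma}}U$ with $W=(X^{\sigma}\times_{P^{\sigma}}U)_{\mathrm{red}}$ via $h$ and checking that $Z'=\phi^{-1}(Z^{\sigma}\times_{P^{\sigma}}U)_{\mathrm{red}}$ really is the base change of the divisor $f^{-1}(Z\times_P U')_{\mathrm{red}}$; once this dictionary is set up, everything else is inherited from Lemma \ref{lem-loc-smooth} and Remark \ref{rem-fact-Frob}.
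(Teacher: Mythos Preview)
Your approach is essentially the paper's: apply Lemma \ref{lem-loc-smooth}, $\sigma$-twist, then base-change along the map $h\colon U\to U'^{\sigma}$ of Remark \ref{rem-fact-Frob}, and set $W':=\widetilde V'^{\sigma}\times_{U'^{\sigma}}U$. The one spot where the paper is more careful than your sketch is the identification you write as $(X^{\sigma}\times_{P^{\sigma}}U'^{\sigma})_{\mathrm{red}}\times_{U'^{\sigma}}U=(X^{\sigma}\times_{P^{\sigma}}U)_{\mathrm{red}}$: this equality is \emph{false} in general (base change along the radicial $h$ can introduce nilpotents), so the paper instead introduces the intermediate scheme $T:=(X^{\sigma}\times_{P^{\sigma}}U)\times_{(X^{\sigma}\times_{P^{\sigma}}U'^{\sigma})}(X^{\sigma}\times_{P^{\sigma}}U'^{\sigma})_{\mathrm{red}}$, checks that $W'\to T$ is projective, surjective, generically finite and \'etale by base change, and then uses \cite[I.9.2]{sga1} to conclude that the factorization $\phi\colon W'\to T_{\mathrm{red}}=W$ retains these properties.
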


\begin{proof}
Using the Lemmas \ref{lem-loc-smooth} (for the construction of $g$ and $f$)
and \ref{rem-fact-Frob} (for the construction of $h$), with their notation we get the diagram of morphisms of $k$-schemes
\begin{equation}
\notag
\xymatrix{
{W'} 
\ar[d] ^-{\phi}
\ar@{=}[r] ^-{}
\ar@{}[rd] ^-{}|\square
\ar[d] ^-{}
&
{W'} 
\ar[r] ^-{}
\ar@{}[rd] ^-{}|\square
\ar[d] ^-{}
&
{\widetilde{V} ^{\prime \sigma}} 
\ar[d] ^-{f ^{\sigma}}
&
{}
&
{}
\\
{T _{\red}}
\ar@{}[rd] ^-{}|\square
\ar@{^{(}->}[r] ^-{}
\ar@{=}[d] ^-{}
&
{T} 
\ar[r] ^-{}
\ar@{}[rd] ^-{}|\square
\ar@{^{(}->}[d] ^-{}
&
{(X ^{\sigma} \times _{P  ^{\sigma}} U ^{\prime \sigma}) _{\mathrm{red}}} 
\ar@{^{(}->}[d] ^-{}
&
{}
&
{}
\\
{W:= (X ^{\sigma} \times _{P  ^{\sigma}} U)  _\mathrm{red}}
\ar@{^{(}->}[r] ^-{}
&
{X ^{\sigma} \times _{P  ^{\sigma}} U} 
\ar[r] ^-{}
\ar@{}[rd] ^-{}|\square
\ar[d] ^-{}
&
{X ^{\sigma} \times _{P  ^{\sigma}} U ^{\prime \sigma}} 
\ar[r] ^-{}
\ar[d] ^-{}
\ar@{}[rd] ^-{}|\square
&
{X ^{\sigma} \times _{P  ^{\sigma}} U ^{ \sigma}}
\ar[r] ^-{}
\ar[d] ^-{}
\ar@{}[rd] ^-{}|\square
& 
{X ^{\sigma} } 
\ar[d] ^-{}
\\ 
&
{U}  
\ar[r] ^-{h}
\ar@/_0,6cm/[rr] ^-{F ^s _{U/k}}
& 
{U ^{\prime \sigma} } 
\ar[r] ^-{g ^{\sigma}}
& 
{U ^{\sigma} } 
\ar[r] ^-{}
& 
{P ^{\sigma} } 
}
\end{equation}
where $T := (X ^{\sigma} \times _{P  ^{\sigma}} U)
\times _{(X ^{\sigma} \times _{P  ^{\sigma}} U ^{\prime \sigma})} 
(X ^{\sigma} \times _{P  ^{\sigma}} U ^{\prime \sigma}) _{\mathrm{red}} $
and 
$W' := \widetilde{V} ^{\prime \sigma} \times _{U ^{\prime \sigma}} U$.
Since $W'$ is smooth over $U$ and $U$ is $k$-smooth, then $W'$ is $k$-smooth (and in particular integral since it is irreducible).
Hence $W' _{\red} = W' $ and then 
$W ' = W ' \times _T T _{\red}$ (use \cite[5.1.7]{EGAI}), which justify the cartesianity
of the left square of the top. Using \cite[5.1.7]{EGAI}, 
we get the equality 
$T _\mathrm{red} = (X ^{\sigma} \times _{P  ^{\sigma}} U)  _\mathrm{red}=:W$
and the cartesianity of the left square of the second row. 
Since $f ^\sigma$ is projective, generically finite and etale 
then so is 
$\phi \colon W' \to W$.
From \ref{SNCDlem0}, 
we get that $Z':= \phi ^{-1} (Z ^{\sigma} \times _{P  ^{\sigma}} U)  _\mathrm{red}$ is the support of a strict normal crossing divisor relatively to $U$.
\end{proof}

\begin{lem}
\label{FrobO-coh desc}
Let $f\colon \PP ' \to \PP$ be a finite, surjective morphism of smooth formal $\V$-schemes.
Let $\E$ be a coherent $\D ^{\dag} _{\PP,\Q}$-module. 
Then $\E$ is $\O _{\PP, \Q}$-coherent if and only if $f ^{!} (\E)$ is $\O _{\PP', \Q}$-coherent.
\end{lem}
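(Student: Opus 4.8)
The plan is to reduce everything to a statement about coherent $\widehat{\D}^{(0)}$-modules and then to a purely commutative-algebra fact about the associated graded rings. First I would observe that the property ``$\E$ is $\O_{\PP,\Q}$-coherent'' is equivalent, via Berthelot's characterization, to the condition $|\mathrm{Car}(\E)| \subset T^*_{\PP}\PP$ (the zero section), together with coherence as a $\D^\dag_{\PP,\Q}$-module. Since the statement is local on $\PP$ (and $f$ being finite, $f^{-1}$ of an open is open and the $\O$-coherence can be checked locally), I would choose affine opens with local coordinates so that $f$ becomes a finite surjective morphism of smooth affine formal schemes. The ``only if'' direction is the easy one: $f^!(\E) = \O_{\PP',\Q}\otimes_{f^{-1}\O_{\PP,\Q}} f^{-1}\E$ up to a shift (for a morphism of smooth formal schemes of the same dimension, as here since $f$ is finite and surjective between smooth connected pieces), and if $\E$ is $\O_{\PP,\Q}$-coherent then this is visibly $\O_{\PP',\Q}$-coherent because $f$ is finite, hence $\O_{\PP',\Q}$ is a coherent $f^{-1}\O_{\PP,\Q}$-module.

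For the ``if'' direction, the key is a descent argument. I would pick a coherent $\widehat{\D}^{(0)}_{\PP}$-module $\E^{(0)}$ without $p$-torsion with $\E^{(0)}_\Q$ generating $\E$ (using \cite[4.5.4]{Be2} and the standard lattice argument as in \ref{Carf!+finetformal}), reduce mod $p$ to get $\overline{\E}^{(0)} = k\otimes_\V \E^{(0)}$, and pass to the associated graded $\gr\,\overline{\E}^{(0)}$, which is a coherent module over $\gr\,\D^{(0)}_{\PP} = \O_{T^*\PP}$. As in the proof of \ref{Carf!f_+finet}, the homomorphism $\Gamma(\PP,\gr\,\D^{(0)}_{\PP}) \to \Gamma(\PP',\gr\,\D^{(0)}_{\PP'})$ pulled back along $\rho_f$ is flat (indeed $f$ is finite étale on an open dense, but more to the point $f^*\gr\,\D^{(0)}_{\PP}$ maps to $\gr\,\D^{(0)}_{\PP'}$ and this is faithfully flat because $f$ is finite surjective and $\PP, \PP'$ are smooth of the same dimension), so $\mathrm{Supp}(\gr\, f^{!^{(0)}}\overline{\E}^{(0)}) = (\text{pullback of } \mathrm{Supp}\,\gr\,\overline{\E}^{(0)})$. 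Hence $f^{!^{(0)}}\overline{\E}^{(0)}$ has support in the zero section of $T^*\PP'$ if and only if $\overline{\E}^{(0)}$ has support in the zero section of $T^*\PP$. Translating back: $|\mathrm{Car}(f^!\E)| \subset T^*_{\PP'}\PP'$ iff $|\mathrm{Car}(\E)| \subset T^*_{\PP}\PP$, which together with the coherence of $f^!\E$ over $\D^\dag_{\PP',\Q}$ (automatic) and a faithfully-flat descent of $\D^\dag$-coherence for $\E$ itself along $f$, gives the equivalence of $\O$-coherence.

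The main obstacle I anticipate is making the faithful flatness precise at the level that lets one descend coherence and the support condition simultaneously: one needs that $f^!$ for a finite surjective morphism of smooth formal $\V$-schemes is, Zariski-locally, base change along a faithfully flat (indeed finite locally free after possibly shrinking, since $f$ is generically étale and $\PP'$ is normal) extension of the relevant rings, both for $\D^\dag_\Q$ and compatibly for the $\widehat{\D}^{(0)}$-lattices and their graded pieces. Once faithful flatness is in hand, descent of coherence is formal, and descent of the characteristic-variety condition follows from the computation of $\mathrm{Supp}$ under flat base change exactly as in \ref{Carf!f_+finet}; the only care needed is to ensure the chosen $p$-torsion-free lattice $\E^{(0)}$ pulls back to a $p$-torsion-free lattice, which is guaranteed because $f^{!^{(0)}} = f^*$ is exact here ($f$ finite locally free) as already noted in \ref{Carf!+finetformal}.
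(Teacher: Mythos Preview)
Your argument has a genuine gap at the heart of the ``if'' direction: the faithful flatness you assert for the graded rings does not hold for a general finite surjective morphism of smooth formal schemes. The identification $f^*\gr\,\D^{(0)}_\PP \riso \gr\,\D^{(0)}_{\PP'}$ (equivalently, $\rho_f$ an isomorphism) is valid only when $f$ is \emph{\'etale}; this is precisely why Corollary \ref{Carf!f_+finet} is stated under that hypothesis. For a ramified finite morphism the tangent map $\T_{P'} \to f^*\T_{P}$ on special fibers need be neither injective nor surjective, and for the relative Frobenius it is identically zero, so $\rho_f$ collapses $P'\times_P T^*P$ onto the zero section of $T^*P'$ and the corresponding ring map is very far from flat. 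Thus your descent of the support condition $|\mathrm{Car}|\subset T^*_\PP\PP$ along $\rho_f$ breaks down exactly in the non-\'etale case, which is the only case with content. A secondary issue is that the characteristic variety $\mathrm{Car}(\E)$ in Berthelot's sense is well-defined only in the presence of a Frobenius structure (via \cite[4.5.4]{Be2}), which the lemma does not assume; working with a chosen level-$0$ lattice does not immediately give a lattice-independent criterion for $\O$-coherence of the $\D^\dag$-module.

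The paper's proof avoids these obstacles by a different route: it decomposes $f$ into a finite \'etale part and a universal homeomorphism. The \'etale case is handled by observing that $\E$ is a direct factor of $f_+f^!(\E)$ and $f_+ = f_*$ preserves $\O$-coherence. The universal homeomorphism is then reduced, via Remark \ref{rem-fact-Frob}, to the relative Frobenius $F^s_{\PP'/\V}$; for Frobenius one has $f^!(\E)\riso f^*(\E)$ even for $\E$ merely $\D^\dag$-coherent (this is the nontrivial input from \cite[4.2.4, 4.3.2.2]{Beintro2}), and one then concludes by faithfully flat descent at the level of $\O_{\PP,\Q}$-modules (not $\gr\,\D^{(0)}$), using \cite[2.2.13]{caro_courbe-nouveau}. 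Your approach would recover the \'etale step, but the purely inseparable part requires this separate argument.
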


\begin{proof}
Since $P$ and $P'$ are regular, then from \cite[4.3.11]{Liu-livre-02},
the morphism $P' \to P$ is flat. Since $\O _{\PP'}$ is $p$-adically complete and without $p$-torsion, 
then using Lemma \cite[2.1]{MonskyWashnitzer} (in the case where $I = (\pi)$), 
the morphism $f\colon\PP' \to \PP$ is also flat.
Since $f$ is also finite, then $f _* \O _{\PP '}$ is a locally free $\O _{\PP}$-module of finite type. 
Hence, we get that the canonical morphism
$f ^* \widehat{\D} ^{(m)} _{\PP}:=
\O _{\PP'} \otimes _{f ^{-1} \O _{\PP}} 
f ^{-1} \widehat{\D} ^{(m)} _{\PP}
\to 
\underleftarrow{\lim} _i \
\O _{P'} \otimes _{f ^{-1} \O _{P _i}} 
f ^{-1} \D ^{(m)} _{P _i}
=
 \widehat{\D} ^{(m)} _{\PP'\to \PP}$
is an isomorphism for any integer $m\geq0$.
Hence tensoring by $\Q$ over $\Z$ and passing the limits through 
the level, 
this yields that 
the canonical morphism
$f ^* \D ^{\dag} _{\PP,\Q}:=
\O _{\PP'} \otimes _{f ^{-1} \O _{\PP}} 
f ^{-1} \D ^{\dag} _{\PP,\Q}
\to 
\D ^{\dag} _{\PP'\to \PP,\Q}$
is an isomorphism.
Hence, 
this implies that
the canonical morphism $f ^* (\E) \to f ^{!} (\E)$
is an isomorphism, where
$f ^* \E:=
\O _{\PP'} \otimes _{f ^{-1} \O _{\PP}} 
f ^{-1} \E$,
and 
$f ^{!} (\E):= 
\D ^{\dag} _{\PP'\to \PP,\Q}
\otimes _{f ^{-1} \D ^{\dag} _{\PP,\Q}} ^\L
f ^{-1} \E$.
If $\E$ is $\O _{\PP, \Q}$-coherent 
this yields that $f ^{!} (\E)$ is $\O _{\PP', \Q}$-coherent.
Conversely, suppose $f ^{!} (\E)$ is $\O _{\PP', \Q}$-coherent.
Since the $\O _{\PP, \Q}$-coherence is local in $\PP$, we can suppose $\PP$ affine. Since $\E$ is a coherent $\D ^{\dag} _{\PP,\Q}$-module, 
this is sufficient to check that $\Gamma ( \PP, \E)$ is of finite type over $\Gamma ( \PP, \O _{\PP, \Q})$ 
(see \cite[2.2.13]{caro_courbe-nouveau}).
Since the extension
$\Gamma ( \PP, \O _{\PP, \Q}) \to \Gamma ( \PP', \O _{\PP', \Q})$ is faithfully flat (because $f \colon \PP' \to \PP$ is flat and surjective), 
since 
$\Gamma ( \PP ', f ^! \E) 
\riso 
\Gamma ( \PP ',\O _{\PP', \Q}) \otimes _{\Gamma ( \PP ,\O _{\PP, \Q})}
\Gamma ( \PP ,\E)$
is of finite type over $\Gamma ( \PP ',\O _{\PP', \Q})$,
we conclude.
\end{proof}

\subsection{The result}

\begin{lem}
\label{isoc-desc-finiteetale}
Let $\alpha \colon \widetilde{X} \to X$ be a finite étale surjective morphism of smooth $k$-varieties. 
Let $\E \in D ^{\mathrm{b}} _{\mathrm{ovhol}} (X/K)$ 
(resp. $\widetilde{\E} \in D ^{\mathrm{b}} _{\mathrm{ovhol}} (\widetilde{X}/K)$).
The property $\widetilde{\E} \in D ^{\mathrm{b}} _{\mathrm{isoc}} (\widetilde{X} /K)$
is equivalent to the property
$\alpha _{+}(\widetilde{\E} )\in D ^{\mathrm{b}} _{\mathrm{isoc}} (X /K)$.
The property $\E  \in D ^{\mathrm{b}} _{\mathrm{isoc}} (X /K)$
is equivalent to the property
$\alpha ^{+}(\E )\in D ^{\mathrm{b}} _{\mathrm{isoc}} (\widetilde{X} /K)$.

\end{lem}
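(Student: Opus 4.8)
The plan is to reduce the statement about complexes to a statement about modules, and then exploit the characterization of objects of $D^{\mathrm b}_{\mathrm{isoc}}$ as those overholonomic complexes whose cohomology sheaves are, locally, overconvergent isocrystals (equivalently, $\O$-coherent after pushing into a smooth formal scheme). Since $\alpha$ is finite étale surjective, $\alpha_+ = \alpha_!$ is exact for the relevant $t$-structures and $\alpha^+ = \alpha^!$ up to shift, so passing to cohomology sheaves commutes with both functors; hence it suffices to treat a single overholonomic module $\E$ on $X$ (resp. $\widetilde\E$ on $\widetilde X$). The key point is the classical descent mechanism already used in Lemma \ref{FrobO-coh desc}: $\E$ is a direct factor of $\alpha_+\alpha^+(\E)$ (via the trace/unit maps, using that $\alpha$ is finite étale of some degree $d$ invertible in $K$), and likewise $\widetilde\E$ is a direct factor of $\alpha^+\alpha_+(\widetilde\E)$.

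First I would set up the formal-scheme framework: choose a proper smooth formal $\V$-scheme $\PP$ with $X\hookrightarrow \PP$, lift $\alpha$ (after shrinking) to a finite étale morphism $f\colon \PP'\to\PP$ of smooth formal schemes, and recall that for $\D^\dag_{\PP,\Q}$-coherent modules coming from overconvergent isocrystals, membership in $D^{\mathrm b}_{\mathrm{isoc}}$ is detected by $\O_{\PP,\Q}$-coherence of the associated module. Then the implications become: if $\widetilde\E\in D^{\mathrm b}_{\mathrm{isoc}}(\widetilde X/K)$, then $\alpha^+$ of it being again $\O$-coherent is the easy direction (pullback of an $\O$-coherent module along $f$ is $\O$-coherent, as in the first line of the proof of Lemma \ref{FrobO-coh desc}), and for $\alpha_+$ one uses that $f_+ = f_*$ preserves $\O$-coherence since $f$ is finite; conversely, if $\alpha_+(\widetilde\E)$ is an isocrystal complex then applying Lemma \ref{FrobO-coh desc} (the nontrivial direction: $f^!(\E)$ $\O$-coherent $\Rightarrow$ $\E$ $\O$-coherent) together with the direct-factor argument recovers $\O$-coherence of $\widetilde\E$; symmetrically for the second equivalence one uses that $\E$ is a direct factor of $\alpha_+\alpha^+\E$.

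The main obstacle I expect is bookkeeping the compatibility between the abstract $D^{\mathrm b}_{\mathrm{ovhol}}$/$D^{\mathrm b}_{\mathrm{isoc}}$ formalism on $k$-varieties and the concrete $\O_\Q$-coherence criterion on formal schemes — in particular checking that $D^{\mathrm b}_{\mathrm{isoc}}$ is stable under $\alpha_+$ and $\alpha^+$ at the level of the triangulated categories (so that one really can pass to cohomology modules), and that the unit/counit of the adjunction $(\alpha^+,\alpha_+)$ (resp. $(\alpha_+,\alpha^!)$) splits because $d$ is invertible, so that $\E$ and $\widetilde\E$ are genuine direct summands. Once these are in place, the proof is a two-line invocation of Lemma \ref{FrobO-coh desc} in each direction; I would also remark that everything is local on $X$, which legitimizes the choice of the formal lift $f$ of $\alpha$.
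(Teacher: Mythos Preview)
The paper's own proof of this lemma is simply ``Left to the reader'', so there is no argument to compare against; your plan is exactly the sort of routine verification the author intended. The strategy---reduce to modules using exactness of $\alpha^{+}$ and $\alpha_{+}$, then combine the direct-summand identifications ($\E$ is a retract of $\alpha_{+}\alpha^{+}\E$, and $\widetilde{\E}$ is a retract of $\alpha^{+}\alpha_{+}\widetilde{\E}$, the latter via the clopen diagonal in $\widetilde{X}\times_{X}\widetilde{X}$) with the easy preservation of $\O$-coherence under finite \'etale $f^{*}$ and $f_{*}$---is correct.

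One small clarification on the bookkeeping: for the implication $\alpha_{+}\widetilde{\E}\in D^{\mathrm b}_{\mathrm{isoc}}\Rightarrow\widetilde{\E}\in D^{\mathrm b}_{\mathrm{isoc}}$, the nontrivial direction of Lemma~\ref{FrobO-coh desc} (namely $f^{!}\E$ $\O$-coherent $\Rightarrow$ $\E$ $\O$-coherent) is not the right fit, since here the information flows from downstairs to upstairs rather than the reverse. The direct-factor argument you also mention is what actually does the work there: $\widetilde{\E}$ is a summand of $\alpha^{+}\alpha_{+}\widetilde{\E}$, and $\alpha^{+}$ of an isocrystal is an isocrystal by the \emph{easy} direction. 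The nontrivial direction of Lemma~\ref{FrobO-coh desc} is instead precisely what is needed (or, alternatively, the other direct-factor argument) for the implication $\alpha^{+}\E\in D^{\mathrm b}_{\mathrm{isoc}}\Rightarrow\E\in D^{\mathrm b}_{\mathrm{isoc}}$. With that reshuffling the proof goes through as you describe.
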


\begin{proof}
Left to the reader.
\end{proof}

\begin{thm}
\label{lem-loc-acyc}
Let $\PP _{1}$ be a smooth separated formal $\V$-scheme, 
$\PP _{2}$ be a proper smooth formal $\V$-scheme,
$\PP:= \PP _1 \times \PP _2$ and
 $\mathrm{pr}\colon  \PP  \to \PP _1$ be the projection.
Let $\E$ be a complex of  $F \text{-}D ^{\mathrm{b}} _{\mathrm{ovhol}} (\D ^{\dag} _{\PP,\Q})$.

Then there exists an open dense formal subscheme $\U _{1}$ of $\PP _{1}$ 
such that, for any finite étale surjective morphisms of the form 
$\alpha _{1}\colon \widetilde{\PP} _{1} \to \PP _{1}$ and
$\alpha _{2}\colon \widetilde{\PP} _{2} \to \PP _{2}$, 
putting  
$\widetilde{\PP}= 
\widetilde{\PP}  _1 \times \widetilde{\PP} _2$, 
$\alpha \colon \widetilde{\PP} \to \PP$ 
and
$\widetilde{\E}:= \alpha ^{+} (\E) $,
we have
$(\mathrm{pr} \circ \alpha ) _{+} (\widetilde{\E}) | \U _{1} 
\in D ^{\mathrm{b}} _{\mathrm{coh}} (\O _{\U _{1},\Q})$.

\end{thm}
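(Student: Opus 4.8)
The plan is to reduce the statement to a question of $\O$-coherence of a single overconvergent holonomic $\D$-module on a generic open, and then apply Theorem \ref{lem-loc-acyc}'s ingredients in the reverse order: first a generic smoothness result (Proposition \ref{corolem-loc-smooth}), then the Frobenius- and finite-\'etale-descent results for $\O$-coherence (Lemmas \ref{FrobO-coh desc} and \ref{isoc-desc-finiteetale}), and finally the comparison between Berthelot's characteristic variety and the cotangent-space calculus of Section 1.1 via Propositions \ref{dag-carf+redu} and \ref{chi=d-times-chi}. Concretely, I would first note that it suffices to treat each cohomology sheaf $\H^j(\E)$ separately, and that by the stability of $F\text{-}\mathrm{Ovhol}$ under the six operations we may assume $\E$ is a single overconvergent holonomic $F$-module on $\PP$. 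By Notation \ref{ntn-rk} there is a smooth dense open of $P = P_1 \times P_2$ over which $\E$ restricts to a convergent $F$-isocrystal (a dagger-dagger isocrystal); shrinking $\PP_1$ we may assume this open dominates $\PP_1$.

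Next I would set up the geometry. Apply Proposition \ref{corolem-loc-smooth} to the morphism $a = \mathrm{pr}\colon X = P_1 \times P_2 \to P = P_1$ (or rather to the smooth locus) together with the closed subset $Z$ where $\E$ fails to be an isocrystal: after a Frobenius twist $\sigma = \sigma_0^s$, there is a dense open $U_1 \subset P_1$, a finite \'etale cover and an alteration $\phi\colon W' \to W := (X^\sigma \times_{P^\sigma} U_1)_{\mathrm{red}}$ with $W'$ smooth over $U_1$ and $\phi^{-1}(Z^\sigma \times U_1)_{\mathrm{red}}$ a relative strict normal crossing divisor. The point of this step is to arrange a situation where, on $W'$, the pullback of $\widetilde{\E}$ is a unipotent overconvergent $F$-isocrystal along a relative NCD — which is exactly the setting where the precise description of the characteristic variety from \cite{Caro-Lagrangianity} applies, yielding that the characteristic variety of $\mathrm{pr}_{W'/U_1\,+}$ of such a module is contained in the zero section, i.e. $\O$-coherence over $U_1$ after possibly shrinking. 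I would lift the alteration $\phi$ and the covers to the formal level (using realizability and the properness built into $\PP_2$) so that $\mathrm{pr}\circ\alpha$ factors, up to proper pushforward and finite-\'etale/universal-homeomorphism morphisms, through the smooth relative curve-type situation on $W'$.

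The key computation is then to propagate $\O$-coherence back down through the factorization. Using Proposition \ref{dag-carf+redu}, the characteristic variety of $(\mathrm{pr}\circ\alpha)_+\widetilde{\E}$ over $\U_1$ is contained in $\mathscr{T}_{\mathrm{pr}\circ\alpha}(|\mathrm{Car}(\widetilde{\E})|)$; combining this with Proposition \ref{lem-stab-T_X X}.3 (surjectivity of the smooth base-change $f$ forces $\mathscr{T}_f(T^*_XX) = T^*_YY$, the zero section) and the fact that on $W'$ the module is $\O$-coherent over $U_1$ — so its characteristic variety is the zero section $T^*_{W'}W'$ — one gets that $\mathscr{T}_{\mathrm{pr}\circ\alpha}$ sends it into the zero section of $T^*U_1$. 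Since for a holonomic module with Frobenius structure, being $\O_{\U_1,\Q}$-coherent is equivalent to having characteristic variety equal to (a multiple of) the zero section — this is Berthelot's characterization, and the needed translation is exactly Lemma \ref{FrobO-coh desc} plus Lemma \ref{isoc-desc-finiteetale} to descend through the finite-\'etale covers $\alpha_1,\alpha_2$ and Remark \ref{rem-fact-Frob}/the Frobenius-descent to undo the twist $\sigma$ — we conclude that $(\mathrm{pr}\circ\alpha)_+(\widetilde{\E})|\U_1 \in D^{\mathrm{b}}_{\mathrm{coh}}(\O_{\U_1,\Q})$, as desired. I would also use Proposition \ref{chi=d-times-chi} to keep track of the degree-$d$ bookkeeping when passing through finite \'etale morphisms, so that the conclusion is insensitive to the choice of $\alpha_1,\alpha_2$ — this is what makes the open $\U_1$ work uniformly for all such covers.

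The main obstacle, I expect, is the interface between the algebraic alteration $\phi\colon W' \to W$ produced by de Jong (a morphism of $k$-varieties) and the formal/rigid-analytic pushforward of arithmetic $\D$-modules: one must choose formal models so that $(\mathrm{pr}\circ\alpha)_+$ genuinely factors through a proper pushforward from a smooth formal lift of $W'$, and control the characteristic variety through the non-smooth locus of $\phi$. The cleanest route is probably to work generically enough on $U_1$ that $\phi$ becomes finite \'etale over the generic point and then invoke Proposition \ref{chi=d-times-chi}, reducing everything to the smooth relative situation where the unipotence along the NCD and the \cite{Caro-Lagrangianity} description of $\mathrm{Car}$ give $\O$-coherence directly; but verifying that the relevant open can be chosen independently of $\alpha_1$ and $\alpha_2$ — essentially because finite \'etale base change does not shrink the locus of relative $\O$-coherence, by Lemma \ref{FrobO-coh desc} — requires some care. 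A secondary subtlety is handling the universal homeomorphism $g\colon U' \to U$ appearing in Lemma \ref{lem-loc-smooth}: it is absorbed by a further Frobenius twist via Remark \ref{rem-fact-Frob}, and the Frobenius descent of \cite{Be2} (cf. Lemma \ref{FrobO-coh desc}) guarantees that $\O$-coherence is unaffected.
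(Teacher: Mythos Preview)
Your overall architecture matches the paper's, but there are two genuine gaps that the paper's proof has to work hard to fill and that your sketch glosses over.

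First, you assert that after the alteration $\phi\colon W'\to W$ of Proposition~\ref{corolem-loc-smooth}, the pullback of $\widetilde{\E}$ to $W'$ is \emph{unipotent} (log-extendable) along the relative NCD. Proposition~\ref{corolem-loc-smooth} is purely geometric: it straightens out the divisor but says nothing about the monodromy of the isocrystal. The paper inserts an extra step (step~1) \emph{before} invoking~\ref{corolem-loc-smooth}: it applies Kedlaya's global semistable reduction theorem \cite[2.4.4]{kedlaya-semistableIV} to produce an alteration $a\colon X'\to X$ over which $\E|Y$ genuinely extends to a log-$F$-isocrystal on $(X',Z')$. Only then does \cite{Caro-Lagrangianity} apply to give $|\mathrm{Car}(\G')|\subset \bigcup_{I'} T^*_{Z'_{I'}}P'$. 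Without this, the characteristic-variety computation in your third paragraph has no input. (Relatedly, the module on $W'$ is \emph{not} $\O$-coherent and its characteristic variety is \emph{not} the zero section $T^*_{W'}W'$; it is the union of conormals to the NCD strata. It is the smoothness of each $Z'_{I'}\to P_1$, via Proposition~\ref{lem-stab-T_X X}, that collapses $\mathscr{T}_{\mathrm{pr}'}$ of this union into $T^*_{P_1}P_1$.)

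Second, ``propagate $\O$-coherence back down through the factorization'' hides a real difficulty. The alteration $\phi$ is only \emph{generically} finite \'etale, so neither Lemma~\ref{isoc-desc-finiteetale} nor Proposition~\ref{chi=d-times-chi} lets you descend from $\phi_+(\G')$ to $\G$. The paper's step~5 is a separate argument: using that $\theta=b\circ\psi$ is a morphism of smooth varieties and $\E^\sigma|Y^\sigma$ is an isocrystal, one shows by adjunction that $\E^\sigma|Y^\sigma$ is a direct summand of $\theta_+\theta^!(\E^\sigma|Y^\sigma)$, and then pushes this through $j^\sigma_+$ to conclude that $\G$ is a direct factor of $\phi_+(\G')$. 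This is what allows the $\O$-coherence of $\mathrm{pr}'_+\alpha^\sigma_{2+}(\widetilde{\G}')$ to imply that of $\mathrm{pr}^\sigma_+\alpha^\sigma_{2+}(\widetilde{\G})$. Your proposal to ``work generically enough on $U_1$ that $\phi$ becomes finite \'etale'' does not suffice, because the module $\G'$ lives on all of $W'$ (including over the NCD) and its characteristic variety is computed there.

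Finally, the paper organizes the reduction by induction on $\dim\mathrm{Supp}(\E)$ together with a d\'evissage to $\E\simeq j_+ j^!\E$ with $j^!\E$ an isocrystal; your reduction to a single cohomology sheaf and a generic-isocrystal locus is not quite the same and does not by itself handle the boundary contributions.
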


\begin{proof}
I)  We can suppose that $\alpha _1= Id$. Indeed, consider the following diagram
\begin{equation}
\notag
\xymatrix{
{\widetilde{\PP}  _1 \times \widetilde{\PP} _2} 
\ar[r] ^-{\alpha _2}
\ar@{}[rd] ^-{}|\square
\ar[d] ^-{\alpha _1}
&
{\widetilde{\PP}  _1 \times \PP _2} 
\ar[d] ^-{\alpha _1}
\ar[r] ^-{\mathrm{pr}}
\ar@{}[rd] ^-{}|\square
&
{\widetilde{\PP}  _1} 
\ar[d] ^-{\alpha _1}
\\ 
{\PP  _1 \times \widetilde{\PP} _2}
\ar[r] ^-{\alpha _2}
& 
{\PP _1 \times \PP _2} 
\ar[r] ^-{\mathrm{pr}}
& 
{\PP _1.}
}
\end{equation} 
Suppose there exists
an open dense formal subscheme $\U _{1}$ of $\PP _{1}$
such that 
$\mathrm{pr} _+ \circ \alpha _{2+} (\alpha _{2} ^{+} (\E))| \U _1
\in D ^{\mathrm{b}} _{\mathrm{coh}} (\O _{\U _{1},\Q})$.
Using base change isomorphism (see for instance \cite[1.3.10]{Abe-Caro-weights}), 
since $\alpha _1 ^{+} = \alpha _1 ^{!}$ (because $\alpha$ is finite etale),
we get
$\alpha _1 ^{+} \circ (\mathrm{pr} _+ \circ \alpha _{2+}) 
\riso 
( \mathrm{pr} _+\alpha _{2+} ) \circ \alpha _1 ^{+} $.
Hence, we get the first isomorphism:
$$\alpha _{1+} \alpha _1 ^{+}\mathrm{pr} _+ \circ \alpha _{2+} (\alpha _{2} ^{+} (\E))
\riso 
\alpha _{1+}  \mathrm{pr} _+\alpha _{2+} \alpha _1 ^{+} (\alpha _{2} ^{+} (\E))
\riso 
\alpha _{1+}  \mathrm{pr} _+\alpha _{2+} (\widetilde{\E})
\riso 
\mathrm{pr} _+ \alpha _{+}  (\widetilde{\E}).$$
From Lemma \ref{isoc-desc-finiteetale}, this implies that
 $(\mathrm{pr} \circ \alpha ) _+ (\widetilde{\E}) |\U _1
 \in 
 D ^{\mathrm{b}} _{\mathrm{coh}} (\O _{\U _{1},\Q})$.

II) We proceed by induction on the dimension of the support $X$ of $\E$. 
The case where $X \to P _1$ is not surjective is obvious (indeed, 
since $\alpha _{+} \alpha ^{+} (\E)$ has is support in $X$, 
we can choose $\U _1$ to be the  open dense subset of $\PP _1$ complementary to $\mathrm{pr} ( X)$).
Hence, we can suppose that $X \to P _1$ is surjective.
There exists a smooth dense open subvariety $Y$ of $X$ 
such that 
$\E  | Y \in F \text{-}D^{\mathrm{b}}_{\mathrm{isoc}} (Y,\PP/K)$
(see the notation \cite[1.2.14]{Abe-Caro-weights} and use \cite[3.1.1]{caro-2006-surcoh-surcv}). 
We put $Z := X \setminus Y$, endowed with its canonical structure of subvariety of $X$
(recall that varieties are reduced following the convention of the paper). 
Let $j \colon Y \subset X$ be the inclusion and $i \colon Z \hookrightarrow X$ the corresponding closed immersion.
By using the exact triangle of localisation of the form
$i _+ i ^! (\E) \to \E \to j _+ j ^{+} (\E) \to +1$ (see \cite[1.1.8.(ii)]{Abe-Caro-weights}), 
by devissage and by induction hypothesis, 
we can suppose $Y$ integral, that $\E$ is a module and that 
$\E \riso j _+ j ^{+} (\E)$ with 
$j ^{+} (\E )\in F \text{-}\mathrm{Isoc} ^{\dag \dag} (Y,\PP/K)$. 
By abuse of notation (to simplify them), $\PP_1$ will mean a dense open set $\U _1$ of $\PP _1$ 
(be careful that the open set has to be independent of the choice of $\alpha _2$), 
$X$ and $\PP$ will mean the base change of $X$ and $\PP$ by the inclusion $\U _1 \subset \PP _1$.

1) 
From de Jong desingularisation Theorem, 
there exists a surjective, projective, generically finite étale morphism $a \colon X ' \to X$,
with $X'$ integral and smooth such that $Z ':= a ^{-1} ( Z)$ is the support of a strict normal crossing divisor of $X'$.
Since $a$ is projective, there exists a closed immersion of the form $u' \colon X' \hookrightarrow \widehat{\P} ^{N} \times \PP$ (this is the product
in the category of formal schemes over $\V$)
such that the composition of $u'$ with the projection $f\colon \widehat{\P} ^{N} \times \PP \to \PP$ is equal to the composition of $a$ with the closed immersion
$X \hookrightarrow \PP$.
Since $\E \riso j _+ j ^{+} (\E)$, by setting 
$\E ' := a ^! (\E)$,  $Y ':= a ^{-1} (Y)$, $\PP' := \widehat{\P} ^{N} \times \PP$, 
we get 
$\E'  \riso j _+ j ^{+} (\E ')$ (use the base change isomorphism \cite[1.3.10]{Abe-Caro-weights})
and 
$j ^{+} (\E' )\in F \text{-}\mathrm{Isoc} ^{\dag \dag} (Y',\PP'/K)$
(with the convention given at the beginning of the paper, 
recall that $j$ means also the morphisms induced by base change form $j$).
We denote by $(X', M _{Z'})$
the smooth log whose
the underlying scheme is $X'$ and the log structure $M _{Z'}$ comes canonically from the strict normal crossing divisor $Z'$. Sometimes
we simply denote it by $(X', Z')$ if the notation is not confusing.
From Kedlaya's semistable reduction theorem (more precisely the global one, i.e. \cite[2.4.4]{kedlaya-semistableIV}),
we can suppose that the overconvergent isocrystal $\E ' | Y'$ on $Y'$ extends to a convergent log-$F$-isocrystal 
on $(X', M _{Z'})$.
Using the properties satisfied by $a$, using for instance \cite[6.3.1]{caro_devissge_surcoh},
we get  that $a _+ \circ a ^{!} (\widetilde{\E})$ is a direct factor of $\widetilde{\E}$.
Since by transitivity $\alpha ^+ a ^{!} \riso a ^{!}  \alpha ^+ $ (recall $\alpha ^+ = \alpha ^!$) and 
$a _+ \alpha _{+} \riso \alpha _{+} a _+ $ 
then $\mathrm{pr} _+a _+ \alpha _{+} \alpha ^+ (a ^{+} (\E))$ is a direct factor of $\mathrm{pr} _+\alpha _{+}\widetilde{\E}$.
By definition of cohomological operations (see the beginning of the paper),
$\mathrm{pr} _+a _+ \alpha _{+} \alpha ^+ (a ^{+} (\E))=
(\mathrm{pr}\circ f ) _+\alpha _{+} \alpha ^+ (\E')$, 
where in the latter term 
$\mathrm{pr}\circ f \colon 
\PP '= \widehat{\P} ^{N} \times \PP _2 \times \PP _1 \to \PP _1$
is the projection
and $\alpha \colon \widehat{\P} ^{N} \times \widetilde{\PP}  \to \widehat{\P} ^{N} \times \PP =\PP '$. 
This implies that
we can reduce to the case where $X$ is smooth, $Z$ is the support of a strict normal crossing divisor of $X$
and $\E |Y$ extends to a convergent log-$F$-isocrystal 
on $(X, M _Z)$ and we can forget the notation of part 1) of the proof.

2) 
From Proposition \ref{corolem-loc-smooth}, replacing $\PP _1$ by an open affine dense formal subscheme if necessary
and for $s$ large enough, putting $ W:= (X ^{\sigma} \times _{P  _1 ^{\sigma}} P _1) _{\mathrm{red}}$,
there exists 
a projective, surjective, generically finite and etale $P _1$-morphism of the form 
$\phi \colon W' \to W$ such that $W'$ is integral and smooth over $P _1$,
$Z':= \phi  ^{-1} (Z ^\sigma  \times _{P ^\sigma _1} P _1 ) _{\mathrm{red}}$ is the support of a strict normal crossing divisor in $W'$ relatively to 
$P _1$.
Let  $a \colon W= (X ^{\sigma} \times _{P  _1 ^{\sigma}} P _1) _{\mathrm{red}}\to X ^{\sigma}$ be 
canonical morphism.
 We set $Y' := \phi  ^{-1} (Y ^\sigma  \times _{P ^\sigma _1} P _1 ) _{\mathrm{red}}$.
Put $\psi := Y'\to (Y ^{\sigma} \times _{P  _1 ^{\sigma}} P _1)  _\mathrm{red}$
and $b \colon 
(Y ^{\sigma} \times _{P  _1 ^{\sigma}} P _1)  _\mathrm{red} \to Y ^{\sigma} $ the morphisms induced respectively by 
$\phi$ and $a$.
Since $\phi$ is projective, for some integer $N$ putting 
$\PP _3:= \widehat{\P} ^{N} \times \PP _{2} ^{\sigma}$ 
and taking $f\colon \PP _3 \to \PP _{2} ^{\sigma}$ to be the projection, 
we get the commutative diagram of the left: 
\begin{equation}
\label{2diag-psi-phi-a-b}
\xymatrix{
{Z'} 
\ar@{^{(}->}[r] ^-{}
\ar@{}[rd] ^-{}|{\square _{\mathrm{red}}}
\ar[d] ^-{}
&
{W'} 
\ar@{^{(}->}[r] ^-{}
\ar[d] ^-{\phi }
&
{\PP ' =\PP _3 \times \PP _1}
\ar[r] ^-{\mathrm{pr}'}
\ar[d] ^-{f }
& 
{\PP _1} 
\ar@{=}[d] ^-{}
\\ 
{(Z ^{\sigma} \times _{P  _1 ^{\sigma}} P _1)  _\mathrm{red}} 
\ar@{^{(}->}[r] ^-{}
\ar[d] ^-{}
\ar@{}[rd] ^-{}|{\square _{\mathrm{red}}}
&
{W=(X ^{\sigma} \times _{P  _1 ^{\sigma}} P _1)  _\mathrm{red}} 
\ar@{^{(}->}[r] ^-{}
\ar[d] ^-{a}
\ar@{}[rd] ^-{}|{\square _{\mathrm{red}}}
&
{\PP _2 ^{\sigma} \times \PP _1}
\ar[r] ^-{\mathrm{pr} ^{\sigma}}
\ar[d] ^-{F ^{s} _{\PP _1 /\V}}
\ar@{}[rd] ^-{}|\square
& 
{\PP _1} 
\ar[d] ^-{F ^{s} _{\PP _1 /\V}}
\\
{Z ^{\sigma }} 
\ar@{^{(}->}[r] ^-{}
&
{X ^{\sigma }} 
\ar@{^{(}->}[r] ^-{}
&
{\PP _2 ^{\sigma}  \times \PP _1 ^{\sigma} }
\ar[r] ^-{\mathrm{pr} ^{\sigma}}
& 
{\PP _1 ^{\sigma},} 
}
\xymatrix{
{Y'} 
\ar@{^{(}->}[r] ^-{j ^{\sigma}}
\ar@{}[rd] ^-{}|{\square }
\ar[d] ^-{\psi}
&
{W'} 
\ar[d] ^-{\phi }
\\ 
{(Y^{\sigma} \times _{P  _1 ^{\sigma}} P _1)  _\mathrm{red}} 
\ar@{^{(}->}[r] ^-{j ^{\sigma}}
\ar[d] ^-{b}
\ar@{}[rd] ^-{}|{\square }
&
{W}
\ar[d] ^-{a}
\\
{Y ^{\sigma }} 
\ar@{^{(}->}[r] ^-{j ^{\sigma}}
&
{X ^{\sigma }} 
}
\end{equation}
where the symbol ``$\square _{\mathrm{red}}$'' means the cartesianity in the category of reduced schemes,
where $\mathrm{pr}' \colon \PP _3 \to \mathrm{Spf}\, \V$ is the structural morphism.
Since $(Y ^{\sigma} \times _{P  _1 ^{\sigma}} P _1)  _\mathrm{red} = a ^{-1} (Y ^{\sigma})$, 
we get the morphism $j ^{\sigma} \colon (Y ^{\sigma} \times _{P  _1 ^{\sigma}} P _1)  _\mathrm{red} 
\to (X ^{\sigma} \times _{P  _1 ^{\sigma}} P _1)  _\mathrm{red}=W$. 
We have also $j ^{\sigma} \colon Y ' \to W'$. Hence, this justifies the cartesianity of the diagram of the right of \ref{2diag-psi-phi-a-b}.

We put
$\G := a ^{!} (\E ^{\sigma})= (F ^{s} _{\PP _1 /\V}) ^{!} (\E ^{\sigma})$ and $\G ':= \phi ^{!} (\G)= \R\underline{\Gamma} ^{\dag} _{W'} f ^{!} (\G)$.
Since $a \circ \phi$ induces the morphism of smooth log-schemes 
$(W' , M _{Z'}) \to (X ^{\sigma} , M _{Z ^{\sigma}})$, since $\E  ^{\sigma} | Y ^{\sigma}$ 
extends to a convergent log-$F$-isocrystal 
on $(X  ^{\sigma}, M _{Z  ^{\sigma}})$
then 
$\G' |Y'$ extends to a convergent log-$F$-isocrystal 
on $(W' , M _{Z'})$.

3) 
Let $Z '_1, \dots , Z '_{r'}$ be the irreducible components of $Z'$.
For any subset $I'$ of $\{ 1, \dots , r'\}$, 
we set $Z '_{I '} : =\cap _{i '\in I'} Z _{i'}$.
Then $|\mathrm{Car} (\G ' )|
\subset \cup _{I '\subset \{ 1, \dots , r'\}}T ^{*} _{Z '_{I'}} P '$.

{\it Proof.} Since the check is local on $P'$, we can suppose $P'$ affine with  
local coordinates
$t '_{1}, \dots , t ' _{d'}$ inducing local coordinates $\overline{t}' _1, \dots, \overline{t} '_{n'}$ of $W'$
and such that  
$Z '_{i'} = V (\overline{t} '_{i'})$ for $i'=1,\dots, r'$. 
From \cite{sga1},
there exists a smooth affine formal $\V$-scheme $\mathfrak{W}'$ whose special fiber is $W'$. 
Let $u \colon \mathfrak{W}' \hookrightarrow \PP'$ be a lifting of $W '\hookrightarrow P '$ and let 
$\FF ':= u ^{!} (\G ')$. 
From \cite[1.4.3.1]{Caro-Lagrangianity},
we have $|\mathrm{Car} (\FF' )|
\subset \cup _{I '\subset \{ 1, \dots , r'\}}T ^{*} _{Z '_{I'}} W '$.
Since $\G ' \riso u _{+} (\FF ')$ (this comes from Berthelot-Kashiwara's theorem), 
from \cite[5.3.3]{Beintro2}, 
we get $|\mathrm{Car} (\G') | = \mathscr{T} _{u} (|\mathrm{Car} (\FF ')  |) $.
Using 
\ref{lem-incluT*strat}, 
we get 
$\mathscr{T} _{u} (T ^{*} _{Z '_{I'}} W ')
=T ^{*} _{Z '_{I'}} P'$, which gives the desired result.

4) We put 
$\widetilde{\G} ' := \alpha _2 ^{\sigma+} (\G ')$.
We have $|\mathrm{Car} ( \mathrm{pr} ' _+ \alpha _{2+} ^{\sigma} (\widetilde{\G} ' ) )|
 \subset 
 T ^{*} _{P_1} P_1$.

{\it Proof.} 
Since $\mathrm{pr}' $ is proper, 
from \ref{dag-carf+redu},
we get the inclusion
$$|\mathrm{Car} ( \mathrm{pr} ' _+ \alpha _{2+} ^{\sigma}  (\widetilde{\G} ' ) )|
\subset 
\mathscr{T} _{\mathrm{pr} '}(|\mathrm{Car} (\alpha _{2+} ^{\sigma}  (\widetilde{\G} ' ))|).$$
Using \ref{ZCar-equality}, we get
$|\mathrm{Car} (\alpha _{2+} ^{\sigma}  \alpha _{2} ^{\sigma +} (\G ') |
=
|\mathrm{Car} (\G ')|$.
Hence, 
$$
\mathscr{T} _{\mathrm{pr} '}(|\mathrm{Car} (\alpha _{2+} ^{\sigma}  (\widetilde{\G} ' ))|)
=
\mathscr{T} _{\mathrm{pr} '}(|\mathrm{Car} (\G')|)
\subset 
\mathscr{T} _{\mathrm{pr} '} (\cup _{I '\subset \{ 1, \dots , r'\}}T ^{*} _{Z '_{I'}} P ')
= \cup _{I '\subset \{ 1, \dots , r'\}} ( \mathscr{T} _{\mathrm{pr} '} (T ^{*} _{Z '_{I'}} P ' )),$$
where the inclusion comes from the step 3).
Moreover, from 
\ref{lem-incluT*strat}, 
$\mathscr{T} _{u _{I'}} (T ^{*} _{Z '_{I'}} Z _{I'} ')= 
T ^{*} _{Z '_{I'}} P '$, where 
$u _{I'} \colon Z '_{I'} \hookrightarrow P'$ is the closed immersion. 
By transitivity of the application $\mathscr{T}$ (see \ref{stab-T_X X}), 
$ \mathscr{T} _{\mathrm{pr} '} (T ^{*} _{Z '_{I'}} P ' )=
\mathscr{T} _{\mathrm{pr} '} (\mathscr{T} _{u _{I'}} (T ^{*} _{Z '_{I'}} Z _{I'} '))
=
\mathscr{T} _{pr '\circ u _{I'}} (T ^{*} _{Z '_{I'}} Z _{I'} ')$.
Since $pr ' \circ u _{I'}\colon Z ' _{I'} \to P_1$ is smooth, 
using 
\ref{lem-stab-T_X X}.2,
we get the inclusion
$\mathscr{T} _{pr '\circ u _{I'}} (T ^{*} _{Z '_{I'}} Z _{I'} ')
\subset 
T ^{*} _{P_1} P_1$, which yields the desired result.

5) $\G $ is a direct factor of 
$ \phi _{+} (\G ')$ (which is by definition, if we look at the left diagram of \ref{2diag-psi-phi-a-b},  equal to $f _{+} (\G')$).

a) 
We have the isomorphism 
$b ^{!} \riso b ^{+}$. Indeed, from 
\cite[1.3.12]{Abe-Caro-weights}, since $b$ is a universal homeomorphism, 
the functors  $b ^{!}$ and $b _+$ induce quasi-inverse equivalences of categories (for
categories of overholonomic complexes). 
Since $b$ is proper, then $b _+ = b _!$ (i.e., via the biduality isomorphism, $b _+$ commutes with dual functors).
Hence, we get that $b ^!$ commutes also with dual functors.

b) Since $\theta := b \circ \psi $ is a morphism of smooth varieties and 
$\E ^{\sigma}|Y ^{\sigma}$ is an isocrystal, then 
$\theta ^{!} (\E ^{\sigma} |Y ^{\sigma}) \riso \theta ^{+} (\E ^{\sigma}|Y ^{\sigma}) $.
Hence, since $\theta$ is proper, we get the morphisms by adjunction (see \cite[1.3.14.(viii)]{Abe-Caro-weights})
$\E ^{\sigma} |Y ^{\sigma}\to 
\theta _{+}\theta ^{+} (\E ^{\sigma}|Y ^{\sigma}) 
\riso 
\theta _{+}\theta ^{!} (\E ^{\sigma}|Y ^{\sigma})
\to 
\E ^{\sigma}|Y ^{\sigma}$.
The composition is an isomorphism. 
Indeed, since $\E ^{\sigma} |Y ^{\sigma}$ is an isocrystal, we reduce to check it on a dense open subset of $Y ^{\sigma}$.
Hence, we can suppose that $\psi$ and $b$ are morphisms of smooth varieties. 
Using \cite[1.3.12]{Abe-Caro-weights} and the transitivity of the adjunction morphisms, 
we reduce to check such property for $\psi$.
Since $\psi$ is generically finite and etale, this is already known (e.g. see 
\cite[6.3.1]{caro_devissge_surcoh}).

c) We have just checked that 
$\E ^{\sigma} |Y ^{\sigma}$ is a direct factor of 
$\theta _{+}\theta ^{!} (\E ^{\sigma}|Y ^{\sigma})$.
This implies that 
$b ^{!}(\E ^{\sigma} |Y ^{\sigma} )$  is a direct factor of 
$b ^{!} \theta _{+}\theta ^{!} (\E ^{\sigma}|Y ^{\sigma})
\riso 
b ^{!} b _+ \psi _{+}\theta ^{!} (\E ^{\sigma}|Y ^{\sigma})
\underset{\cite[1.3.12]{Abe-Caro-weights}}{\riso}
\psi _{+} 
\theta ^{!} (\E ^{\sigma}|Y ^{\sigma})
\riso 
\psi _{+} (\G ' |Y ')$.
We get 
$j ^{\sigma }  _+ b ^{!}(\E ^{\sigma} |Y ^{\sigma} )$ is a direct factor of 
$j ^{\sigma }  _+ \psi _{+} (\G ' |Y ')$.
By base change isomorphism (e.g. see \cite[1.3.10]{Abe-Caro-weights}), 
by using the cartesianity of the right diagram of \ref{2diag-psi-phi-a-b},
we get the isomorphism
$(a \circ \phi ) ^! j ^{\sigma } _+
\riso 
j ^{\sigma } _+ \theta ^!$.
By applying the functor 
$(a \circ \phi ) ^! $ to the isomorphism
$\E ^{\sigma} \riso j ^{\sigma} _+  j ^{\sigma !} \E ^{\sigma} $
we obtain
$\G ^{\prime} \riso j ^{\sigma} _+  j ^{\sigma !} \G ^{\prime}$.
This yields 
$\phi _+ (\G') \riso \phi _{+} j ^{\sigma }  _+  (\G ' |Y ')
\riso 
j ^{\sigma }  _+ \psi _{+} (\G ' |Y ')$
and 
$\G \riso a ^{!} j ^{\sigma }  _+ (\E ^{\sigma} |Y ^{\sigma} )
\riso 
j ^{\sigma }  _+ b ^{!}(\E ^{\sigma} |Y ^{\sigma} )$, 
which gives the desired result.

6) Putting $\widetilde{\G}: =\alpha _{2+} ^{\sigma} (\G)$,
(using some base change isomorphism) the step 5) implies 
that
 $\widetilde{\G}$ is a direct factor of 
$ \phi _{+} (\widetilde{\G} ')= f _{+} (\widetilde{\G} ')$.
Since
$\mathrm{pr} ' _+ \alpha _{2+} ^{\sigma}  (\widetilde{\G} ' ) 
\riso 
\mathrm{pr} ^{\sigma } _+ \alpha _{2+} ^{\sigma} \phi _+  (\widetilde{\G} ' )$,
we obtain
$|\mathrm{Car} ( \mathrm{pr} ^{\sigma } _+ \alpha _{2+} ^{\sigma} (\widetilde{\G} ))| 
\subset 
|\mathrm{Car} ( \mathrm{pr} ' _+ \alpha _{2+} ^{\sigma}  (\widetilde{\G} ' ) )| $.
From part 4), this yields
$|\mathrm{Car} ( \mathrm{pr} ^{\sigma } _+ \alpha _{2+}  ^{\sigma}(\widetilde{\G} ))|
\subset T ^{*} _{P_1} P_1$.
Let $\widetilde{\mathrm{pr}}\colon \widetilde{\PP} _2 \to \Spf \V$ be the structural morphism.  
Since 
$\mathrm{pr} ^{\sigma } _+ \alpha _{2+} ^{\sigma} (\widetilde{\G} )
=
\widetilde{\mathrm{pr}} ^{\sigma } _+ (\widetilde{\G} )$, 
using the second part of the Lemma \ref{lem-O-coh-Car},
this inclusion is equivalent to say that
$\widetilde{\mathrm{pr}} ^{\sigma } _+ (\widetilde{\G} )
\in D ^{\mathrm{b}} _{\mathrm{coh}} (\O _{\PP _{1},\Q})$.
Since 
$\widetilde{\G} \riso  a ^{!}(\widetilde{\E} ^{\sigma} ) $, 
we obtain 
$\widetilde{\mathrm{pr}} ^{\sigma } _+ (\widetilde{\G} ) 
\riso 
\widetilde{\mathrm{pr}} ^{\sigma } _+ (a ^{!}(\widetilde{\E} ^{\sigma} ))
\riso
(F ^{s} _{\PP _1 /\V}) ^{!} \widetilde{\mathrm{pr}} ^{\sigma } _+ (\widetilde{\E} ^{\sigma} )$.
From Lemma \ref{FrobO-coh desc}, this implies 
that 
$\widetilde{\mathrm{pr}} ^{\sigma } _+ (\widetilde{\E} ^{\sigma} )
\in D ^{\mathrm{b}} _{\mathrm{coh}} (\O _{\PP ^{\sigma }_{1},\Q})$, which yields
$\widetilde{\mathrm{pr}} _+ (\widetilde{\E} )
\in D ^{\mathrm{b}} _{\mathrm{coh}} (\O _{\PP _{1},\Q})$.

\end{proof}

\section{Betti numbers estimates}

\subsection{The curve case}

\begin{lem}
\label{fsmoothreldimvanish}
Let $f \colon Y \to X$ be a smooth morphism of integral smooth $k$-varieties
of relative dimension $d$ (i.e. $d = \dim Y - \dim X$).
If $\FF \in D ^{\geq 0} _{\mathrm{ovhol}} (Y/K)$ then 
$f _+ (\FF) \in D ^{\geq -d} _{\mathrm{ovhol}} (X/K)$.
If $\G \in D ^{\leq 0} _{\mathrm{ovhol}} (Y/K)$ then 
$f _! (\G) \in D ^{\leq d} _{\mathrm{ovhol}} (X/K)$.

\end{lem}

\begin{proof}
Since the second statement follows by duality
(recall $f _{!}=\DD _X \circ f _{+} \circ \DD _Y$ and dual functors exchange
$D ^{\geq n}$ with $D ^{\leq -n}$), let us check the first one.
As explained in the convention of the paper, since our varieties are realizable, 
there exist 
a smooth morphism $\phi \colon \QQ \to \PP$ 
of proper smooth formal $\V$-schemes, 
immersions
$\iota \colon X \hookrightarrow \PP$,
$\iota '\colon Y \hookrightarrow \QQ$ 
so that $\phi \circ \iota '= \iota  \circ f$.
The morphism $f$ is the composition of 
an immersion of the form $u '\colon Y \hookrightarrow \phi ^{-1} (X)$ 
followed by the morphism $\phi ^{-1} (X) \to X$ induced by $\phi$.
Since $u ' _{+} \colon D ^{\geq 0} _{\mathrm{ovhol}} (Y/K) \to D ^{\geq 0} _{\mathrm{ovhol}} (\phi ^{-1} (X)/K)$, 
we reduce to the case where 
$ Y= \phi ^{-1} (X)$.
Let $\U$ be an open set of $\PP$ such that 
$\iota$ factors through a closed immersion 
$X \hookrightarrow \U$. 
Put $\mathfrak{V}:= \phi ^{-1} (\U)$ and $\psi \colon \mathfrak{V}\to \U$ be the morphism induced by $\phi$. 
 By definition, 
$\FF \in D ^{\geq 0} _{\mathrm{ovhol}} (Y/K)= D^{\geq 0} _{\mathrm{ovhol}} (Y,\QQ/K)$,
$f _{+} (\FF)= \phi _{+} (\FF)$
and
$\phi _{+} (\FF) \in D^{\geq -d} _{\mathrm{ovhol}} (X,\PP/K)$
means that 
$\phi _{+} (\FF) |\U \in D^{\geq -d} _{\mathrm{ovhol}} (\D ^\dag _{\U,\Q})$.
Since this last property is local in $\U$, we can suppose $\U$ affine. 
Hence, there exists a closed immersion of smooth formal $\V$-schemes 
$u\colon \X \hookrightarrow \U$ which lifts $X \hookrightarrow \U$. 
Set $\Y:= \mathfrak{V} \times _{\U} \X$, which is a smooth lifting of $Y$.
Put $v \colon \Y \hookrightarrow \mathfrak{V}$ the projection, which is a closed immersion
and $\theta \colon \Y \to  \X$ the second projection. 
We have
$\psi _{+} (\FF |\mathfrak{V}) \riso \phi _{+} (\FF) |\U $
and 
$\FF |\mathfrak{V} \in D^{\geq 0} _{\mathrm{ovhol}} (Y,\mathfrak{V}/K) \cong 
D^{\geq 0} _{\mathrm{ovhol}}(\D ^\dag _{\Y,\Q})$, 
where the latter equivalence of categories are given 
by the quasi-inverse functors
$v _+$ and $v ^!$ (this is a form of Berthelot-Kashiwara's theorem).
Hence, we reduce to check that 
the functor $\theta _+$ induces the factorisation
$\theta _{+} \colon 
D^{\geq 0} _{\mathrm{ovhol}}(\D ^\dag _{\Y,\Q})
\to 
D^{\geq -d} _{\mathrm{ovhol}}(\D ^\dag _{\X,\Q})$,
which is obvious because, since $\theta$ is a smooth morphism of smooth formals $\V$-schemes
of relative dimension $d$, then we have
$\theta _{+} (\E) =
\R \theta _{*} ( \E \otimes _{\O _{\Y}} \Omega ^\bullet _{\Y/\X}) [d]$
for any $\E \in D^{\geq 0} _{\mathrm{ovhol}}(\D ^\dag _{\Y,\Q})$.
\end{proof}

\begin{lem}
\label{coro-fsmoothreldimvanish}
Let $X$ be an integral $k$-variety of dimension $d$.
If $\FF \in D ^{\geq 0} _{\mathrm{ovhol}} (X/K)$ then 
$f _+ (\FF) \in D ^{\geq -d} _{\mathrm{ovhol}} (\Spec k /K)$.
If $\G \in D ^{\leq 0} _{\mathrm{ovhol}} (X/K)$ then 
$f _! (\G) \in D ^{\leq d} _{\mathrm{ovhol}} (\Spec k /K)$.
\end{lem}

\begin{proof}
As for the proof of \ref{fsmoothreldimvanish}, we reduce to check the first statement. 
We proceed by induction on the dimension of $X$.
Choose an open smooth dense subvariety $U$ of $X$.
Put $Z := X \setminus U$.
Let $j \colon U \hookrightarrow X$,
$i \colon Z \hookrightarrow X$ be the corresponding morphisms of $k$-varieties. 
Since
$i ^{!} (\FF) \in D ^{\geq 0} _{\mathrm{ovhol}} (Z/K)$, 
then, by induction hypothesis, 
we get $p _{Z+}i ^{!} (\FF) \in D ^{\geq -d} _{\mathrm{ovhol}} (\Spec k/K)$.
From Lemma \ref{fsmoothreldimvanish}, 
we get $p _{U+} j ^{!} (\FF) \in D ^{\geq -d} _{\mathrm{ovhol}} (\Spec k/K)$. 
Applying $p _{X+}$ to the exact triangle of localisation
$i _+ i ^! (\FF) \to \FF \to j _+ j ^! (\FF) \to +1$ (see \cite[1.1.8.(ii)]{Abe-Caro-weights}), we get 
the exact triangle 
$p _{Z+} i ^! (\FF) \to p _{X+}(\FF) \to p _{U+} j ^! (\FF) \to +1$.
\end{proof}

\begin{lem}
\label{ZXIsoc}
Let $u\colon Z \hookrightarrow X$ be a closed immersion of $k$-varieties. 
Suppose $X$ smooth and integral.
Let $\E \in F \text{-}\mathrm{Isoc} ^{\dag \dag} (X/K)$. 
Then $u  ^{!} (\E)\in F \text{-}D ^{\geq r} _{\mathrm{ovhol}} (Z/K)$ (see the notation of \cite[1.2]{Abe-Caro-weights}),
with $r = \dim X - \dim Z$.
\end{lem}

\begin{proof}
First, suppose that $Z$ is smooth. 
Then, we get the exact functor 
$u  ^{!}  [r] \colon F \text{-}\mathrm{Isoc} ^{\dag \dag} (X/K) \to F \text{-}\mathrm{Isoc} ^{\dag \dag} (Z/K)$
and the Lemma follows. 
More generally, we prove the Lemma by induction on the dimension of $Z$.
When the dimension of $Z$ is $0$ then $Z$ is a finite etale over $\Spec k$
(recall a reduced $k$-scheme of finite type of dimension $0$ is finite etale over $\Spec k$, 
and by our convention $k$-varieties are assumed to be reduced)
and then this case has already been checked. 
Suppose $\dim Z >1$. Then there exists a dense open smooth subset $Z _0$ of $Z$. Put
$T := Z \setminus Z _0$. Let $j\colon Z _0 \to Z $ and $i\colon T \to Z$ be the corresponding immersions.
Put $\FF := u ^! (\E)$. 
Then we conclude by using the induction hypothesis and 
Consider the exact triangle 
of localisation 
$i _+ i ^! (\FF) \to \FF \to j _+ j ^! (\FF) \to +1$ (see \cite[1.1.8.(ii)]{Abe-Caro-weights}). 
From the smooth case, 
$ j ^! (\FF) [r] \in F \text{-}\mathrm{Isoc} ^{\dag \dag} (Z _0/K)$
and then 
$j _+ j ^! (\FF)  \in F \text{-}D ^{\geq r} _{\mathrm{ovhol}} (Z/K)$. 
By induction hypothesis, we get 
$i ^! (\FF) \in F \text{-}D ^{\geq r'} _{\mathrm{ovhol}} (T/K)$, 
with $r ' := \dim X - \dim T \geq r$.
Since $i _+$ is exact, 
$i _+ i ^! (\FF)\in F \text{-}D ^{\geq r'} _{\mathrm{ovhol}} (Z/K)\subset
F \text{-}D ^{\geq r} _{\mathrm{ovhol}} (Z/K)$
\end{proof}

\begin{ntn}
\label{ntn-rk}
Let $X$ be an integral variety and 
$\E \in F \text{-}\mathrm{Ovhol} (X/K)$. 
Then, there exists a smooth dense open subvariety $Y$ of $X$ 
such that 
$\E |Y \in F \text{-}\mathrm{Isoc} ^{\dag \dag} (Y/K)$
(see the notation \cite[1.2.14]{Abe-Caro-weights} and use \cite[3.1.1]{caro-2006-surcoh-surcv}). 
Then, by definition, $\mathrm{rk} (\E)$ means the rank of the corresponding overconvergent isocrystal associated to $\E |Y$ 
(which does not depend on the choice of such open dense subvariety $Y$).
\end{ntn}

\begin{lem}
\label{estiH-d}
We suppose that $k$ is infinite.
Let $f \colon Y \to X$ be a smooth morphism of integral smooth $k$-varieties.
We suppose there exists a $k$-valued point $x$ of $X$ such that
$Y _x := f ^{-1} (x)$ is an integral $k$-variety of dimension $d$.
Let $\FF \in F \text{-}\mathrm{Isoc} ^{\dag \dag} (Y/K)$ such that 
$f _+ (\FF) \in F \text{-} D ^{\mathrm{b}} _{\mathrm{isoc}} (X/K)$.
Then we have the inequalities: 
\begin{gather}
\label{estiH-d+}
\mathrm{rk} H ^{-d} f _{+} (\FF) \leq \mathrm{rk} (\FF);
\\
\label{estiH-d!}
\mathrm{rk} H ^{d} f _{!} (\FF) \leq \mathrm{rk} (\FF).
\end{gather}

\end{lem}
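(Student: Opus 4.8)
The strategy is to reduce both inequalities to a statement about generic ranks of cohomology sheaves, and then to compute these ranks on a dense open subvariety of $X$ where everything is as nice as possible. First I would reduce \eqref{estiH-d!} to \eqref{estiH-d+}: by Poincaré--Verdier duality on the smooth morphism $f$ of relative dimension $d$, one has $f_! \riso \D_X \circ f_+ \circ \D_Y$ up to the shift by $2d$ coming from the relative dimension, so that $H^{d} f_!(\FF)$ is (generically) dual to $H^{-d} f_+(\D_Y \FF)$; since $\D_Y \FF$ is again an object of $F\text{-}\mathrm{Isoc}^{\dag\dag}(Y/K)$ with the same rank as $\FF$ and $f_+(\D_Y\FF) \riso \D_X f_!(\FF)$ is again in $F\text{-}D^{\mathrm{b}}_{\mathrm{isoc}}(X/K)$, the bound \eqref{estiH-d!} follows from \eqref{estiH-d+} applied to $\D_Y \FF$. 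So I would concentrate on \eqref{estiH-d+}.

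For \eqref{estiH-d+}, by the definition of $\mathrm{rk}$ (Notation \ref{ntn-rk}) I may replace $X$ by any dense open subvariety, and likewise shrink $Y$ over it, so I would first pass to an open dense $U \subset X$ over which $f_+(\FF) \in F\text{-}D^{\mathrm{b}}_{\mathrm{isoc}}(U/K)$ has all its cohomology sheaves overconvergent isocrystals, and over which $f$ is, after a further étale (and generically finite) base change and Frobenius descent, as explained in Section 1, a smooth morphism admitting a good compactification — in particular I would like $f$ to factor (generically on $X$) through a projection from a relative projective space. The key point is then local in nature: for a relative affine or projective space bundle one can write down $H^{-d}f_+(\FF)$ explicitly using the de Rham complex of $\FF$ relative to $f$; the lowest-degree cohomology $H^{-d}f_+(\FF)$ is the kernel of the first relative connection map, hence a subobject of $f_+^{\nabla = 0}$-type object, which generically has rank at most $\mathrm{rk}(\FF)$. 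More precisely, $H^{-d}f_+(\FF)$ is generically identified with the horizontal sections of $\FF$ along the fibres, and the rank of the sheaf of fibrewise-horizontal sections of an isocrystal of rank $r$ on a fibre is at most $r$, with equality only when $\FF$ is fibrewise trivial; this gives the inequality.

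The main obstacle I anticipate is making the phrase ``$H^{-d}f_+(\FF)$ is generically the fibrewise horizontal sections'' precise and rigorous in the arithmetic $\D$-module setting, since $f_+$ is defined via the relative de Rham (Spencer) complex and a priori only its total cohomology is controlled; I would need a relative Poincaré lemma / base-change statement identifying the lowest relative de Rham cohomology sheaf, valid after the reductions above (so that the fibres are affine spaces or have strict normal crossings boundary and $\FF$ is log-extendable). Concretely, after base change to a point $x$ of a suitable dense open of $X$, $H^{-d}f_+(\FF)_x$ should be computed by $H^{0}$ of the de Rham cohomology of the fibre $Y_x$ with coefficients in $\FF|_{Y_x}$, which for $Y_x$ a smooth affine (or the complement of a normal crossings divisor in a smooth proper) variety and $\FF|_{Y_x}$ an overconvergent isocrystal injects into the global sections of $\FF|_{Y_x}$ over a dense open, hence has rank $\leq \mathrm{rk}(\FF)$. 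Checking that this pointwise estimate is uniform — i.e. that the rank of the coherent sheaf $H^{-d}f_+(\FF)$ at the generic point of $X$ equals this generic fibrewise $H^0$-dimension — is where the argument requires care, and is where I would invoke the coherence/base-change results established in Theorem \ref{lem-loc-acyc} and the structure theory of Section 1. Once that identification is in place, both \eqref{estiH-d+} and (via duality) \eqref{estiH-d!} follow.
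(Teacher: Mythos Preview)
Your duality reduction of \eqref{estiH-d!} to \eqref{estiH-d+} is correct and matches the paper. After that, however, the paper takes a much shorter path than you do, and your route has a genuine gap at the crucial step.

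The paper's argument is: (i) base-change to a closed point of $X$, so one may assume $X=\Spec k$; (ii) observe that $H^{-d}p_{Y+}(\FF)$ is unchanged when $Y$ is replaced by any dense open (a d\'evissage using $\dim(Y\setminus V)<d$); (iii) for $d=1$, shrink $Y$ to an affine curve and identify $H^{-1}p_{Y+}(\FF)$ with the horizontal sections of the associated module $M$; then localize at a missing point and invoke Crew \cite[6.2]{crewfini} to bound $\dim_K M^{\nabla=0}$ by the rank; (iv) induct on $d$ by factoring an \'etale $Y\to\A^d_k$ through the projection to $\A^{d-1}_k$. No compactification, no semistable reduction, and nothing from Section~1 is used.

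Your proposal instead stays in the relative picture and appeals to Theorem~\ref{lem-loc-acyc} and the geometric preparations of Section~1. This is unnecessary: the hypothesis $f_+(\FF)\in F\text{-}D^{\mathrm{b}}_{\mathrm{isoc}}(X/K)$ already gives you $\O$-coherence on all of $X$, so there is nothing for Theorem~\ref{lem-loc-acyc} to produce. More seriously, your key inequality is not established. You write that $H^{-d}f_+(\FF)_x$ ``injects into the global sections of $\FF|_{Y_x}$ over a dense open, hence has rank $\le\mathrm{rk}(\FF)$'', but $\Gamma(V,\FF|_V)$ for $V\subset Y_x$ affine is an infinite-dimensional $K$-vector space, so an injection into it gives no bound. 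What is needed is the $p$-adic analogue of ``a horizontal section is determined by its value at one point'', and in this setting that is precisely the content of Crew's lemma for the Robba ring in the curve case, followed by induction on the relative dimension. Without that input (or an equivalent), the argument does not close.
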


\begin{proof}
0) 
We remark that
$\mathrm{rk} (\FF)
=
\mathrm{rk} (\DD _Y (\FF))$
(recall \cite[3.12]{Abe-Frob-Poincare-dual}). 
Since $\DD _X H ^{-d} f _{+} (\FF)
=
H ^{d}  \DD _X f _{+} (\FF)
=
H ^{d} f _{!}  \DD _Y  (\FF)$ (recall $f _{!} (\FF) := \DD _X (f _{+} (\DD _Y (\FF ))) $
and 
$\DD _Y \circ \DD _Y \riso Id$),
we get similarly
$\mathrm{rk} H ^{-d} f _{+} (\FF)
=
\mathrm{rk}H ^{d} f _{!}  (\DD _Y  (\FF))$.
Hence,  we reduce to check the first inequality. 
Let $i _x \colon x \hookrightarrow X$ be the canonical closed immersion.
Recall that from the  convention of the paper 
$i _x \colon Y _x \hookrightarrow Y$ 
(resp. $f \colon Y _x \to x$) means the morphism induced from $i _x$ (resp. $f$)
by base change. 
The functors $i _x ^{!} [\dim X]  \colon F \text{-}\mathrm{Isoc} ^{\dag \dag} (X/K)
\to 
F \text{-}\mathrm{Isoc} ^{\dag \dag} (x/K)$
and 
$i _x ^{!} [\dim X]  \colon F \text{-}\mathrm{Isoc} ^{\dag \dag} (Y/K)
\to 
F \text{-}\mathrm{Isoc} ^{\dag \dag} ( Y _x/K)$
are exact and preserve the rank (and in particular an isocrystal $\G$ is null if and only if 
$i _x ^{!} [\dim X] (\G) = 0$). 
We have the base change isomorphism
$i _x ^{!} [\dim Y] \circ f _+ (\FF)
\riso 
f _+\circ  i _x ^{!} [\dim Y]  (\FF)$ (e.g. see \cite[1.3.10]{Abe-Caro-weights}).
Hence, 
we reduce to the case where $X = \Spec k$, i.e. 
$f= p _Y$  (see the notation of the paper).

1) We check that for any open dense subset $V$ of $Y$, we have  
$H ^{-d} p _{Y +} (\FF) \riso H ^{-d} p _{V +} (\FF |V)$.  
Indeed, let $V$ be a open dense subset of $Y$ and put $Z := Y\setminus V$.
We denote by $j \colon V  \hookrightarrow Y$ the canonical open immersion
and 
$i \colon Z \hookrightarrow Y$ the canonical closed immersion. 
Since $V $ is dense in $Y$, then 
$d _Z:=\dim Z < d$.
Since $\FF$ is an isocrystal, then from 
\ref{ZXIsoc} we have
$i  ^{!} (\FF)\in F \text{-}D ^{\geq d-d _{Z}} _{\mathrm{ovhol}} (Z/K)$.
Hence, via 
\ref{coro-fsmoothreldimvanish}, 
this implies 
$p _{Z +} i ^{!} (\FF)  \in F \text{-}D ^{\geq d-2d _{Z}} _{\mathrm{ovhol}} (\Spec k/K)$.
Since $d _{Z} \leq d -1$, we get $d-2d _{Z} \geq -d +2$. 
This yields 
$H ^{-d} p _{Z+} i ^{!} (\FF)=0$
and $H ^{-d +1} p _{Z +} i ^{!} (\FF)=0$.
Applying $p _{Y+}$ to the exact triangle of localisation
$i _+ i ^! (\FF) \to \FF \to j _+ j ^! (\FF) \to +1$ (see \cite[1.1.8.(ii)]{Abe-Caro-weights}), we get 
the exact triangle 
$p _{Z+} i ^! (\FF) \to p _{Y+}(\FF) \to p _{V+}  (\FF |V) \to +1$.
By considering the long exact sequence associated with the latter exact triangle, 
we conclude.

2) We check the lemma in the case where $d =1$. 
From 1), we can suppose that $Y$ is affine. 
Choose a smooth compactification $\overline{Y}$ of $Y$ and put 
$D := \overline{Y} \setminus Y$. Choose a closed point $y$ of $D$. 
With the notation \cite[7.1.1]{crewfini}, 
we associate to $\FF$ an $A ^\dag _Y$-module module $M$ endowed with a connexion.  
Then $H ^{-1} p _{Y +} (\FF)$ is equal to the horizontal sections of $M$. 
Let $A (y)$ be the Robba ring (or local algebra following the terminology of \cite{crewfini})
corresponding to $y$ (see the notation of \cite[7.3]{crewfini}). 
Using \cite[6.2]{crewfini} we get that the dimension over $K$ of the $K$-vector space of the horizontal sections of 
$M \otimes _{A ^\dag _Y} A (y)$ (which is bigger than that of the horizontal sections of $M$) is less or equal to the rank of $M$ (which is also
the rank of $\FF$).

3) Now we prove the lemma by induction on $d$. 
Suppose $d \geq 2$.
From part 1) of the proof, using \cite{Kedlaya-coveraffinebis}, 
we can suppose that there exists 
a finite etale morphism of the form
$\phi \colon Y \to \A ^{d} _k$. Let $g $ be the composite of $\phi$ with the projection 
$ \A ^{1} _k \times \A ^{d-1} _k \to \A ^{d-1} _k$.
There exists a dense open subvariety $U$ of $\A ^{d-1} _k$
such that $g _+ (\FF) |U \in F \text{-} D ^{\mathrm{b}} _{\mathrm{isoc}} (U/K)$
(see the notation \cite[1.2.14]{Abe-Caro-weights} and use \cite[3.1.1]{caro-2006-surcoh-surcv}). 
Let $V := g ^{-1} (U)$ and $h\colon V \to U$ 
the induced smooth morphism of relative dimension $1$. 
Since $k$ is infinite and $U$ is dense in $\A ^{d-1} _k$, 
there exists a $k$-valued point $x$ of  $U$.
Since $g$ is surjective, $g ^{-1} (x)$ is a smooth variety of dimension $1$.
Shrinking $V$ (from now $V$ is only a open dense subset of $g ^{-1} (U)$) if necessary,
we can assume that $h ^{-1} (x)$ is an integral smooth variety of dimension $1$
(use again part 1) of the proof).
Proceeding as in part 0) and using part 2) of the proof, 
we get 
$\mathrm{rk}\,\mathcal{H} ^{-1} h  _+ (\FF |V)
\leq  
\mathrm{rk}\, (\FF |V)=\mathrm{rk}\, (\FF)$.
By using the induction hypothesis, 
we obtain 
$\mathrm{rk}\,H ^{-d +1} p _{U +} (\mathcal{H} ^{-1} h  _+ (\FF |V))
\leq 
\mathrm{rk}\,\mathcal{H} ^{-1} h  _+ (\FF |V)$.
Using Lemma \ref{fsmoothreldimvanish}, the functors
$p _{U +} [-d+1]$ and $h _{+} [-1]$ are left exact.
Hence, we get $H ^{-d} p _{V +} (\FF |V)
\riso 
H ^{-d +1} p _{U +} (\mathcal{H} ^{-1} h  _+ (\FF |V))$
and we are done.
\end{proof}

\begin{lem}
\label{i!j!}
We suppose that $k$ is algebraically closed.
Let $X$ be a smooth irreducible curve, 
$j \colon U \hookrightarrow X$ an open immersion such that  
$Z: = X \setminus U$ is a closed point. 
Let $\FF \in F \text{-} \mathrm{Isoc} ^{\dag \dag} (U/K)$. 
Let $i \colon Z \hookrightarrow X$ be the closed immersion. 
Then, for $n=0,1$, we have the inequality
\begin{gather}
\label{i!j!1}
\dim _K H ^{n} i ^{!} j _{!} (\FF) 
\leq 
\mathrm{rk} ( \FF).
\end{gather}

\end{lem}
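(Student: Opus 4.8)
The plan is to reduce the statement to a computation of the cohomology of a local rigid-analytic cohomology complex and then to invoke Crew's finiteness results (\cite{crewfini}), exactly in the spirit of step 2) of Lemma \ref{estiH-d}. First I would set up the long exact sequence of the triangle $i _* i ^{!} j _{!}\FF \to j _! \FF \to \R j _* \FF \to {}$ (or rather the localization triangle relating $i ^! j _!$, the ambient cohomology on $X$, and the cohomology on $U$), so that $H ^{n} i ^{!} j _{!}(\FF)$ is controlled by the rigid cohomology $H ^{*} _{\mathrm{rig}}(U,\FF)$, $H ^{*}_{\mathrm{rig},c}(U,\FF)$ and the ``boundary'' contribution at the missing point $Z$. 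The key point is that, $Z$ being a single closed point of a smooth curve, the boundary contribution is governed by the Robba ring $A(Z)$ at $Z$: the relevant cohomology groups are the invariants and coinvariants (equivalently $H ^0$ and $H ^1$ of the de Rham complex) of the module-with-connexion $M$ attached to $\FF$ over $A(Z)^{\dag}$, base-changed to $A(Z)$.

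Concretely, I expect the cleanest route is: (i) identify $H ^{0} i ^{!} j _! \FF$ with (a subspace of) the local horizontal sections at $Z$, and $H ^{1} i ^{!} j _! \FF$ with (a quotient of) the local de Rham $H ^1$ at $Z$; (ii) bound the dimension of the space of horizontal sections over $A(Z)$ by $\mathrm{rk}(\FF)$, which is precisely \cite[6.2]{crewfini} as used already in step 2) of Lemma \ref{estiH-d}; (iii) bound $\dim _K H ^1$ of the de Rham complex over $A(Z)$ by $\mathrm{rk}(\FF)$ as well — here one uses that the de Rham Euler characteristic over the Robba ring of a $(\phi,\nabla)$-module is $\leq 0$ (Crew's local index formula / the Euler–Poincaré formula), together with the vanishing or dimension control already established for $H ^0$. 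Since $\FF$ carries a Frobenius structure, $M$ is a $(\phi,\nabla)$-module over the Robba ring, so all of Crew's machinery applies, and the irregularity (Swan conductor) term is $\geq 0$, which is exactly what forces $\dim H ^1 \leq \mathrm{rk}$ once $\dim H ^0 \leq \mathrm{rk}$.

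The main obstacle I anticipate is bookkeeping the dévissage from the global complexes $i ^! j _! (\FF)$ on $X$ down to the purely local statement over $A(Z)$: one must check that passing from the overconvergent picture on $X$ to the Robba ring at $Z$ is compatible with $i ^!$ and $j _!$, i.e. that no extra cohomology is introduced from the generic part of $X$ (this is why $U$ being a curve and $Z$ a point is essential — there is only one boundary point and the ambient $H ^{*}$ on $X \setminus Z$ restricted to a punctured neighbourhood is just the Robba ring cohomology). Once this identification is in place, the two inequalities $\dim _K H ^{0} i ^{!} j _!\FF \leq \mathrm{rk}(\FF)$ and $\dim _K H ^{1} i ^{!} j _! \FF \leq \mathrm{rk}(\FF)$ both follow from Crew's local results (\cite[6.2]{crewfini} together with the local Euler–Poincaré formula), in direct parallel with how step 2) of the proof of Lemma \ref{estiH-d} handled the relative-dimension-one case. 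I would also double-check the Frobenius-structure hypothesis is genuinely used (to guarantee $M$ is a $(\phi,\nabla)$-module and hence that Crew's finiteness and index theorems apply), so that the statement with $F\text{-}\mathrm{Isoc} ^{\dag\dag}$ is not vacuous.
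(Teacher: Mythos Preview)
Your approach is genuinely different from the paper's, and step (iii) contains a real gap. The local index formula over the Robba ring reads $\chi(M)=\dim H^0-\dim H^1=-\mathrm{Irr}(M)$, so $\mathrm{Irr}(M)\geq 0$ gives $\dim H^1\geq\dim H^0$, i.e.\ a \emph{lower} bound on $H^1$, not an upper bound. Concretely, a rank-one $(\phi,\nabla)$-module over $A(Z)$ with slope $>1$ has $H^0=0$ and $\dim H^1=\mathrm{Irr}>1=\mathrm{rk}$, so the bound you want on the Robba-ring $H^1$ is simply false. Thus even if your identification in (i) were correct (so that $H^1 i^!j_!\FF$ is a quotient of the local de Rham $H^1$), you cannot conclude. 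The point is that $H^1 i^!j_!\FF$ is the fibre at $Z$ of $j_+\FF/j_{!+}\FF$, which is insensitive to the wild part of the local monodromy; the full Robba-ring $H^1$ is not.

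The paper proceeds quite differently. First it uses duality (via \cite[1.4.3]{Abe-Caro-weights}) to reduce the case $n=0$ to $n=1$, since $H^0 i^!j_!(\FF)\simeq \DD_Z H^1 i^!j_!(\DD_U\FF)$. For $n=1$ it works with the exact sequence $0\to j_{!+}\FF\to j_+\FF\to i_+H^1i^!j_!\FF\to 0$, then applies Kedlaya's semistable reduction theorem to produce a finite surjective $a\colon X'\to X$ (with $X'$ smooth) such that $a^!\FF$ becomes log-extendable along the preimage of $Z$. A comparison of intermediate extensions (using \cite[1.4.8]{Abe-Caro-weights}) yields an epimorphism $H^1 i^!j_!(a^!\FF)\twoheadrightarrow H^1 i^!j_!(\FF)$, and in the log-extendable (unipotent monodromy) case the bound $\dim H^1 i^!j_!(a^!\FF)\leq\mathrm{rk}(a^!\FF)=\mathrm{rk}(\FF)$ is available directly from \cite[3.4.12.1]{Abe-Caro-weights}. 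In other words, the paper deals with the wild ramification not by a local index computation but by killing it via semistable reduction, and this is precisely the missing ingredient in your plan.
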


\begin{proof}
Let $\theta _j  \colon j _{!} (\FF ) \to j _{+} (\FF )$ be the canonical morphism. 
Let $\mathcal{C}$ be the mapping cone of $\theta _j $. By definition,
$\mathcal{H} ^{-1} (\mathcal{C})
\riso
\ker (\theta _j )$
and 
$\mathcal{H} ^{0} (\mathcal{C})
\riso
\coker (\theta _j )$.
Since $j ^{!}(\theta _j)$ is an isomorphism (use $j ^! = j ^+$), then 
$j ^{!} (\mathcal{C}) =0$. By using the triangle of localization 
$i _+ i ^! (\mathcal{C})\to \mathcal{C} \to j _+ j ^{!} (\mathcal{C}) \to +1$, 
this implies that the canonical morphism 
$i _+ i ^! (\mathcal{C})\to \mathcal{C} $ is an isomorphism, i.e .
the cone of $\theta _j$ has its support in $Z$ (see the terminology of \cite[1.3.2.(iii)]{Abe-Caro-weights}).
Since $i ^! j _{+}= 0$, since the cone of $\theta _j$ has its support in $Z$,
then by using Berthelot-Kashiwara theorem (in the form of  \cite[1.3.2.(iii)]{Abe-Caro-weights}),
we get $\ker (\theta _j )
\riso 
i _{+} H ^{0} i ^{!} j _{!}(\FF )$
and 
$\mathrm{coker} (\theta _j )
\riso 
i _{+} H ^{1} i ^{!} j _{!} (\FF )$.
From the last isomorphism of Corollary \cite[1.4.3]{Abe-Caro-weights}, 
we obtain 
$\DD _{X}i _{+} H ^{0} i ^{!} j _{!} (\FF)
\riso 
i _{+}H ^{1} i ^{!} j _{!} (\DD _{U} (\FF) )$.
Hence, 
by using again Berthelot-Kashiwara theorem (in the form of  \cite[1.3.2.(iii)]{Abe-Caro-weights}),
and the relative duality isomorphism (i.e. the isomorphism  \cite[1.3.14.(vi)]{Abe-Caro-weights}),
we get 
$ H ^{0} i ^{!} j _{!} (\FF)
\riso 
\DD _{Z} H ^{1} i ^{!} j _{!} (\DD _{U} (\FF) )$
and
we reduce to check the case $n=1$. 

We have the exact sequence
$0 \to j _{!+} (\FF) \to  j _{+} (\FF) \to i _{+} H ^{1} i ^{!} j _{!} (\FF ) \to 0$,
where $j _{!+} (\FF)$ is the intermediate extension as defined in \cite[1.4.1]{Abe-Caro-weights}, i.e. 
$ j _{!+} (\FF) $ is the image of $\theta _j$.
From Kedlaya's semistable theorem 
\cite{kedlaya_semi-stable}, there exists a finite surjective morphism
$f \colon P' \to X$, with $P'$ smooth integral, such that 
$f ^{!} (\FF)$ comes from a convergent isocrystal on $P'$ with logarithmic poles along $f ^{-1}(Z)$.
Since $P'$ and $X$ are smooth,
since $f$ is finite and surjective then $f$ is flat
(e.g. see \cite[IV.15.4.2]{EGAIV4}). 
Let $X'$ be an open dense subset of $P'$ such that $Z':= f ^{-1} (Z) \cap X'$ is a closed point. 
We get the (quasi-finite) flat morphism $a \colon X' \to X$ and the open immersion 
$j \colon U':= a ^{-1} (U) \to X'$ and $a \colon U' \to U$. 
Since $a$ is flat and quasi-finite, then $a ^{!}$ is exact. Hence, we get the exact sequence
$0 \to a ^{!} j _{!+} (\FF) \to  a ^{!} j_{+} (\FF) \to a ^{!} i _{+} H ^{1} i ^{!} j _{!} (\FF ) \to 0$.
From the base change isomorphism (e.g. see \cite[1.3.10]{Abe-Caro-weights}), 
we get 
$a ^{!} j_{+} (\FF)  \riso  j_{+}  a ^{!}(\FF) $.
Hence,
$a ^{!} j _{!+} (\FF) $ is a subobject of $j_{+}  a ^{!}(\FF) $.
Moreover, 
$j ^{!}a ^{!} j _{!+} (\FF)  \riso a ^{!}  j ^{!}j _{!+} (\FF)  \riso a ^{!} (\FF) $.
This yields, from \cite[1.4.8]{Abe-Caro-weights}, 
that the inclusion $j _{!+} (a ^{!} ( \FF))  \hookrightarrow j_{+} ( a ^{!}  (\FF))$
factors through the composition 
$a ^{!} j _{!+} (\FF)  \hookrightarrow a ^{!} j_{+} (\FF) \riso j_{+} a ^{!}  (\FF)$.
Then, we get the epimorphism
$$i _{+} H ^{1} i ^{!} j _{!} (a ^{!} \FF ) \riso 
j_{+} ( a ^{!}  (\FF)) / j _{!+} (a ^{!} ( \FF)) 
\twoheadrightarrow 
a ^{!} j_{+} (\FF) / a ^{!} j _{!+} (\FF)
\riso 
a ^{!} i _{+} H ^{1} i ^{!} j _{!} (\FF )
\underset{\tiny\cite[1.3.10]{Abe-Caro-weights}}{\riso}
i _+  H ^{1} i ^{!} j _{!} (\FF ),$$ 
where for the last isomorphism we use also that
$a $ induces the isomorphism $a \colon a ^{-1} (Z) \riso Z$ (because 
$k$ is algebraically closed).
By applying $i ^!$ (and by using Berthelot-Kashiwara theorem), 
we get 
$\dim _K H ^{1} i ^{!} j _{!} (\FF) 
\leq 
\dim _K 
H ^{1} i ^{!} j _{!} (a ^{!} \FF )$.
Since $a ^{!} \FF$ is log-extendable, then
from \cite[3.4.19.1]{Abe-Caro-weights} we obtain the inequality
$\dim _K 
H ^{1} i ^{!} j _{!} (a ^{!} \FF ) 
\leq 
\mathrm{rk} (a ^{!} \FF ) 
=
\mathrm{rk} (\FF ) .$
\end{proof}

Since the proof of the main result on Betti estimate (see \ref{BBD4.5.1}) in the case of curves is easier 
(e.g. remark that we do not need in this case the Lemma \ref{lem-loc-acyc})
and since its proof is made by induction, we first check separately this curve case via the following Proposition.

\begin{prop}
[Curve case]
\label{Betti:curvecase}
Suppose $k$ is algebraically closed. 
Let $X _1$ be a projective, smooth and connected curve, 
$\E \in F \text{-}D ^{ \leq 0}  (X _1/K)$ 
(see the notation of \cite[1.2]{Abe-Caro-weights}).
There exists a constant $c (\E)$ such that, for any finite étale morphism of degree $d _1$ of the form
$\alpha _1 \colon \widetilde{X} _1\to X _1$ with $\widetilde{X} _1$ connected, 
by putting  
$\widetilde{\E}:= \alpha _1 ^{+} (\E) $,
we have
\begin{enumerate}
\item $\dim _K H ^{1} p _{\widetilde{X} _1 +} (\widetilde{\E}) \leq  c (\E)$ ;
\item For any integer $r \leq 0$, $\dim _K H ^{r} p _{\widetilde{X} _1 +} (\widetilde{\E}) \leq  c (\E) d _1$.
\end{enumerate}
\end{prop}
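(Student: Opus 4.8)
The plan is to proceed by induction on $d _1$ via finite étale covers, following the structure of \cite[4.5.1]{BBD} in the curve case but replacing the $l$-adic inputs by the results established above. First I would reduce to the case where $\E$ is a single overconvergent $F$-isocrystal on a dense open $U _1 \subset X _1$, extended by $j _!$; this is justified by a dévissage using the cohomological amplitude bounds built into $F\text{-}D ^{\leq 0}$ and by additivity of Euler characteristics and of the estimates under distinguished triangles (the constant $c(\E)$ being taken additive in $\E$). The key point is that for a connected finite étale cover $\alpha \colon \widetilde{X} _1 \to X _1$ of degree $d _1$, writing $\widetilde{\FF} = \alpha ^+ (\FF)$, one has $\mathrm{rk}(\widetilde{\FF}) = \mathrm{rk}(\FF)$ and, by the index formula, $\chi (\widetilde{X} _1, \widetilde{\E}) = d _1 \cdot \chi (X _1, \E)$ (this is exactly \ref{chi-fetFormula}, applied after spreading out $\alpha$ to a finite étale morphism of formal schemes).

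Next I would bound the top cohomology $H ^1$. Using the exact sequence relating $j _!$, $j _{!+}$ and the punctual quotient at each point of $Z = X _1 \setminus U _1$, together with Lemma \ref{i!j!} applied at each missing point of the cover $\widetilde{X} _1$, I get $\dim _K H ^1 p _{\widetilde{X} _1 +}(\widetilde{\E})$ controlled by $\mathrm{rk}(\FF)$ times the number of points of $\alpha ^{-1}(Z)$; but since $k$ is algebraically closed and $\alpha$ is finite étale of degree $d _1$, that point count is $d _1 \cdot \#Z$, which is the wrong bound. The fix — and this is the essential trick — is to observe that $H ^1 p _{\widetilde{X} _1 +}(\widetilde{\E})$ is dual to $H ^{-1} p _{\widetilde{X} _1 +}$ of the dual, so the constant in $H ^1$ can be extracted from the study of $H ^0$ and $H ^{-1}$; more precisely, by the index theorem the quantity $\dim H ^1 - \dim H ^0 + \dim H ^{-1}$ (the relevant alternating sum, using that on a curve $F$-isocrystal cohomology is concentrated in degrees $-1,0,1$ in the appropriate normalization) equals $-\chi(\widetilde{X} _1,\widetilde{\FF})$, which is $d _1$ times a fixed number. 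So it suffices to bound $H ^1$ (equivalently $H ^{-1}$ of the dual) by a constant independent of $d _1$, and then the other groups inherit bounds of the form $c(\E) d _1$.

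To bound $H ^1 p _{\widetilde{X} _1 +}(j _! \widetilde{\FF})$ by a constant independent of $d _1$: one has the excision triangle $i _+ i ^! j _! \widetilde{\FF} \to j _! \widetilde{\FF} \to j _+ \widetilde{\FF}$, giving $H ^1 p _{\widetilde{X} _1 +}(j _! \widetilde{\FF})$ as an extension built from $H ^2 p_{\widetilde{X}_1+}(i_+ i^! j_! \widetilde\FF)=0$ on a curve and $H ^1 p _{\widetilde{X} _1 +}(j _+ \widetilde{\FF})$; since $p _{\widetilde X_1} = p _{X_1} \circ \alpha$ and $\alpha _+ j_+ \widetilde\FF = j_+ \alpha_+ \widetilde\FF$ with $\alpha _+ \widetilde\FF$ of rank $d_1 \mathrm{rk}(\FF)$... so the direct approach again gives $d_1$. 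The genuinely necessary observation is therefore the Lemma \ref{i!j!}-type local bound combined with the fact that $H ^1 p _{\widetilde X_1+}(j_+\widetilde\FF)$ is a quotient of $H ^1_{\mathrm{dR}}$ which, for a curve, is governed by $2g(\widetilde X_1) - 2 + \#(\text{punctures})$ times $\mathrm{rk}$ — but via the same duality as in step two, $H ^1 p_{\widetilde X_1+}(j_+\widetilde\FF) = H ^1_c$-type object dual to $H ^{-1}$ of $j_!$ of the dual, and $H^{-1}p_{\widetilde X_1 +}(j_! \widetilde\FF^\vee)$ injects into $H^{-1}p_{\widetilde X_1+}(j_+\widetilde\FF^\vee)$ which is the horizontal sections, bounded by $\mathrm{rk}(\FF)$ independently of $d_1$ by the Robba-ring argument of \ref{estiH-d} (case $d=1$). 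I expect the main obstacle to be exactly this: making rigorous that the top cohomology $H ^1$ (or its dual $H ^{-1}$) is bounded by a constant not depending on the degree, which forces one to pass through horizontal sections / the $j_{!+}$ sub-object rather than through naive rank counts, and to track carefully the interplay between $j_!$, $j_+$, $j_{!+}$ and Poincaré--Verdier duality on the cover.
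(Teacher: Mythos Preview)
Your overall skeleton --- d\'evissage via the triangle $j_!j^!\E \to \E \to i_+i^+\E$, then the Euler characteristic identity $\chi(\widetilde X_1,\widetilde\E)=d_1\cdot\chi(X_1,\E)$ from \ref{chi-fetFormula}, then a bound on $H^1$ independent of $d_1$ --- is exactly what the paper does, and is correct. Two remarks, however.

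First, the phrase ``induction on $d_1$'' at the opening is misleading and unused: nothing in your argument or the paper's inducts on the degree. Drop it.

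Second, you make the $H^1$ bound far harder than it is. After the reduction to $\E=j_!\FF$ with $\FF\in F\text{-}\mathrm{Isoc}^{\dag\dag}(U_1/K)$, one has $H^1 p_{\widetilde X_1+}(\widetilde\E)=H^1 p_{\widetilde U_1!}(\widetilde\FF)$, and Lemma~\ref{estiH-d} (specifically inequality~\ref{estiH-d!} with $d=1$) gives directly
\[
\dim_K H^1 p_{\widetilde U_1!}(\widetilde\FF)\le \mathrm{rk}(\widetilde\FF)=\mathrm{rk}(\FF),
\]
independent of $d_1$. That is the whole argument. Your detours through Lemma~\ref{i!j!} at each point of $\alpha^{-1}(Z)$, the excision triangle $i_+i^!j_!\to j_!\to j_+$, and the dualizing manoeuvre in the last paragraph are all unnecessary here: the duality step you eventually land on (passing from $H^1 f_!$ to $H^{-1} f_+$ of the dual, bounded by horizontal sections) is precisely the \emph{proof} of \ref{estiH-d!}, already packaged in that lemma. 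Lemma~\ref{i!j!} is not used at all in the curve case; it only enters in the induction step of the general Theorem~\ref{BBD4.5.1}. So the paper's proof is: \ref{estiH-d!} gives $\dim H^1\le\mathrm{rk}(\FF)$; \ref{chi-fetFormula} gives $|\chi|\le d_1|\chi(X_1,\E)|$; and since $U_1$ affine forces $H^r=0$ for $r\notin\{0,1\}$, take $c(\E)=\max\{|\chi(X_1,\E)|,\mathrm{rk}(\FF)\}$.
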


\begin{proof} 
There exists an open dense affine subvariety $U _{1}$ of $X _{1}$ 
such that $\E | U _1
\in F \text{-}D ^{\mathrm{b}} _{\mathrm{isoc}} (U _{1}/K)$
(see the notation \cite[1.2.14]{Abe-Caro-weights} and use \cite[3.1.1]{caro-2006-surcoh-surcv}).  
Let $Z _1$ be the closed subvariety $X _{1} \setminus U _{1}$, 
$j \colon U _{1} \hookrightarrow X _{1}$
and 
$i \colon Z _1\hookrightarrow X _1$ be the immersions.
We put
$\widetilde{U} _1 := \alpha _{1} ^{-1} (U _1)$,
$\widetilde{Z} _1 := \alpha _{1} ^{-1} (Z _1)$ i.e we get the cartesian squares:

\begin{equation}
\notag
\xymatrix{
{\widetilde{Z} _1} 
\ar[r] ^-{i}
\ar@{}[rd] ^-{}|\square
\ar[d] ^-{\alpha _1}
&
{\widetilde{X} _1} 
\ar[d] ^-{\alpha _1}
\ar@{}[rd] ^-{}|\square
&
{\widetilde{U} _1} 
\ar[d] ^-{\alpha _1}
\ar[l] _-{j}
\\ 
{Z _1}  
\ar[r] ^-{i}
& 
{X _1} 
& 
{U _1.}
\ar[l] _-{j}
}
\end{equation} 
 
By considering the exact triangle 
$j _{!} j ^{!} (\E) \to \E \to i _{+} i ^{+} (\E) \to +1$ (see \cite[1.1.8.(ii)]{Abe-Caro-weights})
we reduce to check the proposition 
for  
$\E= j _{!} j ^{!} (\E) $ or 
$\E= i _{+} i ^{+} (\E)$ (and because 
the functors 
$j _{!} j ^{!}$ and 
$i _{+} i ^{+}$ preserve
$D ^{ \leq 0}$).

1) In the case where $\E= i _{+} i ^{+} (\E)$, we can suppose that $Z _1$ is a point.
We put
$\G:= i ^{+} (\E)$. 
Since $\widetilde{Z} _1$ is $d _{1}$ copies of $Z _1$, 
then we get 
$\alpha _{1+}\alpha _1 ^{+} \G \riso \G ^{d _1}$.
Since $i _+ \riso i _!$ and $\alpha _1 ^+ \riso \alpha _1 ^!$, then we get from the base change isomorphism
(e.g. see \cite[1.3.10]{Abe-Caro-weights}):
$\alpha _1 ^+ i _+ \riso i _+ \alpha _1 ^+$ .
Since $\E= i _{+} i ^{+} (\E)$ , 
this implies 
$ \alpha _{1 +} (\widetilde{\E})
= 
\alpha _{1+ } \alpha _1 ^+ (\E)
\riso 
i _+  \alpha _{1+ } \alpha _1 ^+ i ^+ (\E)
=
i _+  \alpha _{1+ } \alpha _1 ^+ (\G)
\riso 
i _+   \G ^{d _1}$.
Hence, 
$p _{\widetilde{X} _1 +} (\widetilde{\E}) \riso 
p _{X _1 +}  (  i _{+} \G ^{d _1})
\riso 
\G ^{d _1}$, which gives the desired result.

2) Suppose now 
$\E= j _{!} j ^{!} (\E) $.
We put 
$\FF= j ^{!} (\E) $, 
$\widetilde{\FF}= j ^{!} (\widetilde{\E})$.
Using the spectral sequence
$E _{2} ^{r,s} = H ^{r} p _{\widetilde{X} _{1}+}
(\mathcal{H} ^{s}  ( \widetilde{\E}))
\Rightarrow 
H ^{r+s} p _{\widetilde{X} _1+}  ( \widetilde{\E})$,
we reduce to the case where 
$\FF \in F \text{-}\mathrm{Isoc} ^{\dag \dag} (U _{1}/K)$
(and then 
$\widetilde{\FF} \in F \text{-}\mathrm{Isoc} ^{\dag \dag} (\widetilde{U} _{1}/K)$).
Since $X _1$ is proper and smooth integral of dimension $1$, 
then 
$p _{\widetilde{X} _1 +} (\widetilde{\E})
\riso 
p _{\widetilde{X} _1 !} (\widetilde{\E})$
is a complex concentrated in degree $-1,0,1$ (use Lemma \ref{fsmoothreldimvanish}).
Since $U _1$ is affine, then $p _{\widetilde{U} _1!}$
is left $t$-exact
(see \cite[1.3.13.(i)]{Abe-Caro-weights}).
Since 
$\widetilde{\E}\riso j _{!} (\widetilde{\FF})$, 
then $p _{\widetilde{X} _1 +} (\widetilde{\E})
\riso 
p _{\widetilde{X} _1 !} (\widetilde{\E})
\riso 
p _{\widetilde{U} _1!} (\widetilde{\FF})$.
Hence, 
we get
$H ^{-1} p _{\widetilde{X} _1 +} (\widetilde{\E})=0$.
From \ref{estiH-d!}, we get
$\dim _K H ^{1} p _{\widetilde{X} _1 +} (\widetilde{\E})
=
\dim _K H ^{1} p _{\widetilde{U} _1!} (\widetilde{\FF})
\leq 
\mathrm{rk} (\widetilde{\FF})
=
\mathrm{rk} (\FF)$.
It remains to estimate 
$|\chi ( \widetilde{X} _1, \widetilde{\E})|$.
Since $p _{\widetilde{X} _1 +} (\widetilde{\E}) \riso 
p _{X _1 +} (\alpha_{1 +} \alpha_1 ^{+} (\E))$, we get the equality
$\chi ( \widetilde{X} _1, \widetilde{\E}) 
=
\chi (X_1, \alpha_{1 +} \alpha_1 ^{+} (\E))$.
From Lemma \ref{chi=d-times-chi}
(recall also that from \cite[III.6.10]{sga1}, 
there exist some smooth proper formal $\V$-schemes $\X _1 $ and $\widetilde{\X} _1$ which are respectively a lifting of 
$X _1$ and $\widetilde{X} _1$, so we are in the geometrical context of Lemma \ref{chi=d-times-chi}),
we have the formula
$\chi (X_1, \alpha_{1 +} \alpha_1 ^{+} (\E))
= d _1\cdot \chi (X _1,\E ).$
Hence, we can choose in that case 
$c (\E) = \max \{| \chi (X _1,\E )| ;  \mathrm{rk} (\FF)\}$. 

\end{proof}

\subsection{The result and some applications}
In this subsection, we will need the Fourier transform (see \ref{Fourier}). 
Hence, we assume here that there exists  $\pi _0\in K$ such that $\pi _0^{p-1}=-p$ and such that $\sigma (\pi _0) =\pi _0$. 
Let $q := p^s$, $W (\F _q)$ be the ring of Witt vectors of $\F _q$.  
We get a complete discrete valuation ring of residue field $\F _q$ by setting
$\V _0 := W (\F _q) [X] /(X ^{p-1} +p)$  (indeed, 
$X ^{p-1} +p$ is an Eisenstein polynomial). 
The class of $X$ is a uniformiser of $\V _0$. 
Remark that the canonical lifting of the $s$th Frobenius power of $\F _q$ is the identity of $\V _0$.
We define the extension $\rho \colon \V _0 \to \V$ by sending the class of $X$ to $\pi _0$. 
Since $\sigma (\pi _0) =\pi _0$, this homomorphism $\rho$ is compatible with Frobenius liftings i.e. 
$\sigma \circ \rho = \rho $. 
Let $K _0$ be the field of fraction of $\V _0$. 
We know the $K _0 = \mathrm{Frac} (W (\F _q)) ( \mu _p)$, where 
$\mu _p \subset \overline{\Q} _p$ are the group of $p$-rooth of unity (see \cite[1.3]{MR773087}). 

We fix a non trivial additive character $\psi \colon \F _q \to \mu _p \subset K _0$.

\begin{empt}
\label{Artin-Shreier}
We denote by $\mathcal{L}_\psi$ the Artin-Schreier
isocrystal in $F\text{-}\mathrm{Isoc}^{\dag\dag}(\mathbb{A} _{\F _q}  ^1/K _0)$
(see Proposition \cite[1.5]{MR773087}). 
The extension $\V _0 \to \V$ 
induces the morphism $\widehat{\P} ^{1} _{\V} \to \widehat{\P} ^{1} _{\V _0}$.
We obtain the morphism of ringed spaces $ f \colon
( \widehat{\P} ^{1} _{\V} , \O _{\widehat{\P} ^{1} _{\V}} (\hdag \infty) _\Q)
\to 
(\widehat{\P} ^{1} _{\V _0} , \O _{\widehat{\P} ^{1} _{\V _0}}(\hdag \infty) _\Q)$, 
where $\infty$ is the closed point $\P ^1 _{\F _q} \setminus \A ^1 _{\F _q}$
and respectively
$\P ^1 _{k} \setminus \A ^1 _{k}$.
Recall (see the convention of the paper) that
$F\text{-}\mathrm{Isoc}^{\dag\dag}(\mathbb{A} _{\F _q}  ^1/K _0)$ 
($F\text{-}\mathrm{Isoc}^{\dag\dag}(\mathbb{A} _{k}  ^1/K )$ )
is equivalent to the category of 
coherent 
$\O _{\widehat{\P} ^{1} _{\V _0}} (\hdag \infty) _\Q$-modules $\E$ 
(resp. coherent 
$\O _{\widehat{\P} ^{1} _{\V}} (\hdag \infty) _\Q$-modules $\E$) 
endowed with an integrable connexion and 
a Frobenius structure  i.e. an isomorphism of the form $F ^* (\E) \riso \E$.
Hence, we get  the functor
$f ^* \colon F\text{-}\mathrm{Isoc}^{\dag\dag}(\mathbb{A} _{\F _q}  ^1/K _0)
\to 
F\text{-}\mathrm{Isoc}^{\dag\dag}(\mathbb{A} _{k}  ^1/K )$.
We still denote by  $\mathcal{L}_\psi$ the 
object of $F\text{-}\mathrm{Isoc}^{\dag\dag}(\mathbb{A} _{k}  ^1/K )$ which is the image by $f ^*$ 
of the Artin-Schreier
isocrystal $\mathcal{L}_\psi$ in $F\text{-}\mathrm{Isoc}^{\dag\dag}(\mathbb{A} _{\F _q}  ^1/K _0)$.

\end{empt}

\begin{empt}
[Fourier transform]
\label{Fourier}
Let $S$ be a $k$-variety. 
Let us briefly review the geometric Fourier transform defined by Noot-Huyghe in 
\cite{{Noot-Huyghe-fourierI}}, but only in the specific case of $\A ^{1} _S /S$.
Let $\mu  \colon \mathbb{A} _k ^1 \times (\mathbb{A} _k ^1)' \to
\mathbb{A} _k ^1$ be the canonical duality bracket given by $t\mapsto  x y$, 
where $(\mathbb{A}_S^1)'$ is the ``dual affine space over $S$'', which is
nothing but $\mathbb{A}^1_S$ (we have 
$\mathbb{A}^1_S =\Spec \O _S [x]$ and $(\mathbb{A}^1_S )'=\Spec \O _S [y]$). 
We denote the composition by
$\mu _S \colon \mathbb{A} _S ^1 \times _S (\mathbb{A} _S ^1)' \to
\mathbb{A} _k ^1 \times (\mathbb{A} _k ^1)' \to
\mathbb{A} _k ^1$.

 Now, consider the following diagram:
$(\mathbb{A} _S ^1)'\xleftarrow{p_2}\mathbb{A} _S ^1 \times _S (\mathbb{A} _S ^1)' \xrightarrow{p_1}\mathbb{A} _S^1$.
Similarly to Katz and Laumon in \cite[7.1.4, 7.1.5]{KatzLaumon} (in fact, here is the particular case where $r=1$),
for any $\E\in
F\text{-}D^{\mathrm{b}}_{\mathrm{ovhol}}(\mathbb{A} _S^1)$, the geometric
Fourier transform $\mathscr{F}_\psi(\E)$ is defined to
be 
\begin{equation}
\label{Fourierdef}
\mathscr{F}_{\psi}(\E) := p_{2+}\bigl(p_1^!\E \widetilde{\otimes}_{\mathbb{A}^{2} _S}
\mu _S ^{!} \mathcal{L}_\psi [-1]\bigr)
\end{equation}
(cf.\
\cite[3.2.1]{Noot-Huyghe-fourierI}\footnote{Notice that our twisted
tensor product and hers are the same.}).
Here $\widetilde{\otimes}$ is compatible with Laumon's notation (see \cite[7.0.1, page 192]{KatzLaumon})
and was defined in the context of arithmetic $\D$-modules in \cite[1.1.6]{Abe-Caro-weights}.

\end{empt}

\begin{empt}
\label{Fourier1} 
An important property for us of Fourier transform is the following. 
The functor $\mathscr{F} _{\psi}[1]$ is acyclic, i.e. 
if $\E\in
F\text{-}\mathrm{Ovhol}(\mathbb{A} _S^1/K)$ then 
$\mathscr{F} _{\psi}(\E) [1]\in F\text{-}\mathrm{Ovhol}((\mathbb{A} _S^1) ^{\prime}/K)$
(cf.\ \cite[Theorem 5.3.1]{Noot-Huyghe-fourierI}).

Remark: this might be simpler in our case to define the Fourier transform by setting 
$\mathscr{F}_{\psi}(\E) := p_{2+}\bigl(p_1^!\E \widetilde{\otimes}_{\mathbb{A}^{2} _S}
\mu _S ^{!} \mathcal{L}_\psi\bigr)$ 
(and then $\mathscr{F} _{\psi}(\E)\in F\text{-}\mathrm{Ovhol}((\mathbb{A} _S^1) ^{\prime}/K)$) but, 
to avoid confusion with the standard notation, we stick with the convention 
of \cite[3.2.2]{AbeMarmora} or 
\cite[3.2.1]{Noot-Huyghe-fourierI} (for this latter reference, remark that there is a typo in 
\cite[3.1.1]{Noot-Huyghe-fourierI} : $K _\pi = \delta ^* L _\pi [2N-2]$ and not $K _\pi = \delta ^* L _\pi [2-2N]$).
\end{empt}

\begin{lemm}
\label{Fourierf^!f_+}
Let $f \colon T \to S$ be a morphism of $k$-varieties. 
Let $\E\in
F\text{-}D^{\mathrm{b}}_{\mathrm{ovhol}}(\mathbb{A} _S^1 /K)$
and 
$\FF\in
F\text{-}D^{\mathrm{b}}_{\mathrm{ovhol}}(\mathbb{A} _T ^1 /K)$.
We have the canonical isomorphisms
\begin{gather} 
\label{Fourierf^!}
f ^{!} \mathscr{F} _{\psi} (\E)
\riso 
\mathscr{F} _{\psi} (f ^{!} \E) ; 
\\
\label{Fourierf_+}
f _{+} \mathscr{F} _{\psi} (\FF)
\riso 
\mathscr{F} _{\psi} (f _{+} \FF).
\end{gather}

\end{lemm}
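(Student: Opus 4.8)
The statement is that geometric Fourier transform commutes with both extraordinary inverse image $f^!$ and direct image $f_+$ along a base-change morphism $f\colon T\to S$. Both isomorphisms are of the same nature: $\mathscr{F}_\pi$ is built out of the two projections $p_1,p_2$ from $\A^1_S\times_S(\A^1_S)'$ together with a twisted tensor with $\mu_S^!\mathcal{L}_\pi$, and all three pieces ($p_i^!$, $p_{2+}$, $\widetilde\otimes$) are themselves defined by the six operations, which satisfy base change. So the plan is to reduce \eqref{Fourierf^!} and \eqref{Fourierf_+} to the formal properties of the six operations — base change, the projection formula / compatibility of $\widetilde\otimes$ with $f^!$ and $f_+$, and the commutation of inverse images with inverse images (resp. the base change theorem for a cartesian square) — applied to the evident commutative/cartesian diagram obtained from the fibre-product-over-$T$ versus fibre-product-over-$S$ of affine lines.

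\emph{First}, set up notation: write $g\colon \A^1_T\times_T(\A^1_T)'\to \A^1_S\times_S(\A^1_S)'$, $\widetilde f\colon \A^1_T\to\A^1_S$, $\widetilde f{}'\colon (\A^1_T)'\to(\A^1_S)'$ for the maps induced by $f$, and record the two cartesian squares expressing $p_1\circ g=\widetilde f\circ p_1^{T}$ and $p_2\circ g=\widetilde f{}'\circ p_2^{T}$, together with the crucial compatibility $\mu_T=\mu_S\circ g$ of the duality brackets (which holds because $\mu_S$ is pulled back from $\A^1_k\times(\A^1_k)'$, a scheme over $\Spec k$, so it is insensitive to the base). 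This last point gives $g^!\mu_S^!\mathcal{L}_\pi\riso \mu_T^!\mathcal{L}_\pi$.

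\emph{For \eqref{Fourierf^!}}: apply $f^!$ to $p_{2+}(p_1^!\E\,\widetilde\otimes\,\mu_S^!\mathcal{L}_\pi)$. Base change along the cartesian square with vertices $\A^1_T\times_T(\A^1_T)'$, $\A^1_S\times_S(\A^1_S)'$, $(\A^1_T)'$, $(\A^1_S)'$ turns $f^!p_{2+}$ into $p_{2+}^{T}g^!$. Then one needs that $g^!$ is compatible with $\widetilde\otimes$ — this is a property of the twisted tensor product over the six operations, following from the analogous property of the ordinary (internal) tensor product and the definition of $\widetilde\otimes$ in \cite[1.1.6]{Abe-Caro-weights} — which converts $g^!(p_1^!\E\,\widetilde\otimes\,\mu_S^!\mathcal{L}_\pi)$ into $g^!p_1^!\E\,\widetilde\otimes\,g^!\mu_S^!\mathcal{L}_\pi$. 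Finally $g^!p_1^!\E\riso(p_1\circ g)^!\E=(\widetilde f\circ p_1^T)^!\E\riso p_1^{T!}\widetilde f^!\E=p_1^{T!}(f^!\E)$ (the last step since $f^!$ on $\A^1_S$ is by definition $\widetilde f^!$), and $g^!\mu_S^!\mathcal{L}_\pi\riso\mu_T^!\mathcal{L}_\pi$ as above; assembling gives $\mathscr{F}_\pi(f^!\E)$.

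\emph{For \eqref{Fourierf_+}}: dually, apply $f_+$ to $p_{2+}(p_1^!\FF\,\widetilde\otimes\,\mu_T^!\mathcal{L}_\pi)$, now over $T$. Commuting $f_+$ past $p_{2+}$ is just $\widetilde f{}'_+\circ p_{2+}^{T}\riso p_{2+}^{S}\circ g_+$ (composition of direct images along the triangle), so the issue is to push $g_+$ inside the twisted tensor: here one uses the projection formula for $\widetilde\otimes$, $g_+(p_1^{T!}\FF\,\widetilde\otimes\,\mu_T^!\mathcal{L}_\pi)\riso p_1^{S!}(f_+\FF)\,\widetilde\otimes\,\mu_S^!\mathcal{L}_\pi$, which combines (i) $g_+p_1^{T!}\FF\riso p_1^{S!}\widetilde f_+\FF=p_1^{S!}(f_+\FF)$ by base change along the first cartesian square, with (ii) the fact that $\mu_T^!\mathcal{L}_\pi\riso g^!\mu_S^!\mathcal{L}_\pi$, reducing the claim to the projection-formula identity $g_+(A\,\widetilde\otimes\,g^!B)\riso(g_+A)\,\widetilde\otimes\,B$ for the twisted tensor product. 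Then $p_{2+}^S$ of the result is exactly $\mathscr{F}_\pi(f_+\FF)$.

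\emph{Main obstacle.} The routine parts are the diagram chase with base change; the genuine content is establishing that the \emph{twisted} tensor product $\widetilde\otimes$ — and not merely the ordinary $\otimes$ — satisfies the two compatibilities invoked: commutation with $g^!$, and the projection formula $g_+(A\,\widetilde\otimes\,g^!B)\riso(g_+A)\,\widetilde\otimes\,B$. Since by definition (see \cite[1.1.6]{Abe-Caro-weights} and \cite[7.0.1]{KatzLaumon}) $A\,\widetilde\otimes_{\A^n_S}B$ is a shift of $\delta^!(A\boxtimes_S B)$ for the diagonal-type map $\delta$, both reduce — by the transitivity/compatibility of $^!$ and $_+$ under composition and base change — to the corresponding known statements for external tensor products and ordinary $\otimes^!$ in Berthelot's formalism as set up in \cite{caro-Tsuzuki} and \cite{Abe-Caro-weights}. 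I would spell this reduction out carefully, as it is the only place where something beyond formal nonsense is used; everything else is bookkeeping with cartesian squares and the fact that $\mathcal{L}_\pi$ lives over $\Spec k$.
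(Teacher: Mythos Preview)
Your proposal is correct and follows essentially the same approach as the paper: base change for $f^{!}p_{2+}$ (resp.\ transitivity for $f_{+}p_{2+}$), the identity $\mu_T=\mu_S\circ g$, compatibility of $(\cdot)^{!}$ with $\widetilde\otimes$, and the projection formula for $\widetilde\otimes$ (the paper cites \cite[A.6]{Abe-Caro-weights} for the latter). The only difference is cosmetic---the paper overloads the symbol $f$ for all the induced maps you call $g$, $\widetilde f$, $\widetilde f{}'$---and your explicit identification of the two $\widetilde\otimes$-compatibilities as the only nonformal input is in fact more careful than the paper's write-up.
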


\begin{proof}
We have the canonical isomorphisms:
$$f ^{!} \mathscr{F} _{\psi} (\E)
=
f ^{!}  p_{2+}\bigl(p_1^!\E \widetilde{\otimes}_{\mathbb{A}^{2} _S}
\mu _S ^{!} \mathcal{L}_\psi [-1]\bigr)
\underset{\tiny\cite[1.3.10]{Abe-Caro-weights}}{\riso}
p_{2+} f ^{!}  \bigl(p_1^!\E \widetilde{\otimes}_{\mathbb{A}^{2} _S}
\mu _S ^{!} \mathcal{L}_\psi [-1]\bigr)
\underset{\tiny\cite[1.1.9.1]{Abe-Caro-weights}}{\riso}
p_{2+} \bigl( f ^{!}  p_1^!\E \widetilde{\otimes}_{\mathbb{A}^{2} _T}
f ^{!}  \mu _S ^{!} \mathcal{L}_\psi [-1]\bigr).
$$
Since $\mu _T = \mu _S \circ f $, by transitivity of the extraordinary inverse image, 
we obtain the isomorphism
$f ^{!}  \mu _S ^{!} \mathcal{L}_\psi [-1] \riso  \mu _T ^{!} \mathcal{L}_\psi [-1]$. 
Hence, 
$p_{2+} \bigl( f ^{!}  p_1^!\E \widetilde{\otimes}_{\mathbb{A}^{2} _T}
f ^{!}  \mu _S ^{!} \mathcal{L}_\psi [-1]\bigr)
\riso 
p_{2+} \bigl( p_1^! (f ^{!}  \E )\widetilde{\otimes}_{\mathbb{A}^{2} _T}
 \mu _T ^{!} \mathcal{L}_\psi [-1]\bigr)
 = \mathscr{F} _{\psi} (f ^{!} \E)$, which gives \ref{Fourierf^!}.
 Moreover, by transitivity of the push-forward, we get the first isomorphism: 
\begin{gather}
\notag
f _{+} \mathscr{F} _{\psi} (\FF)=
f _{+}  p_{2+} \bigl( p_1^! (\FF )\widetilde{\otimes}_{\mathbb{A}^{2} _T}
 \mu _T ^{!} \mathcal{L}_\psi [-1]\bigr)
 \riso
 p_{2+}   f _{+}  \bigl( p_1^! (\FF )\widetilde{\otimes}_{\mathbb{A}^{2} _T}
 f ^{!}  \mu _S ^{!}  \mathcal{L}_\psi [-1]\bigr)
\underset{\tiny\cite[A.6]{Abe-Caro-weights}}{\riso}
   p_{2+}    \bigl(  f _{+}  p_1^! (\FF )\widetilde{\otimes}_{\mathbb{A}^{2} _S}
\mu _S ^{!}  \mathcal{L}_\psi [-1]\bigr)
\\ \notag
\underset{\tiny\cite[1.3.10]{Abe-Caro-weights}}{\riso}
p_{2+}    \bigl(  p_1^! (f _{+}  \FF )\widetilde{\otimes}_{\mathbb{A}^{2} _S}
\mu _S ^{!}  \mathcal{L}_\psi [-1]\bigr)
=
\mathscr{F} _{\psi} (f _{+} \FF).
\end{gather}

\end{proof}

We will use the following remark during the proof of the main theorem.
\begin{rem}
Let $\mathcal{R} _K$ be the Robba ring over $K$ (e.g. 
see \cite[15.1.4]{Kedlaya-padicDiffEq}).
Let  $M$ be a differential module on $\mathcal{R} _K$, i.e. 
a free $\mathcal{R} _K$-module of finite type endowed with an integrable connexion 
(e.g. see the beginning of \cite[3]{christol-Mebkhout2002}).
We suppose $M$ solvable (e.g. see Definition \cite[8.7]{christol-Mebkhout2002}).
We get the differential slope decomposition 
$M = \oplus M _{\beta}$, where $M _\beta$
 is purely of differential  slope $\beta$
 (see Theorem \cite[2.4-1]{Christol-Mebkhout_IV}).
 By definition
 $\mathrm{Irr} (M) := \sum _{\beta \geq 0} \beta \cdot\mathrm{rk} ( M _\beta)$ (see both definitions in \cite[2.3.1]{AbeMarmora} and 
 the formula \cite[2.3.2.2]{AbeMarmora} which compare both definitions).
 Hence, we get that
\begin{equation}
\label{rem-Irr-esti-rk}
\mathrm{Irr} (M) \leq \mathrm{rk} (M) + \mathrm{Irr} ( M _{]1, \infty[}),
\end{equation}
  where $M _{]1, \infty[}:=  \oplus _{\beta \in ]1, \infty[} M _{\beta}$.

\end{rem}

\begin{thm}
\label{BBD4.5.1}
Suppose $k$ is algebraically closed. 
Let $(X _a) _{1 \leq a \leq n}$ be projective, smooth and connected curves, 
$X= \prod _{a=1} ^{n} X _a$, 
$\E \in F \text{-}D ^{ \leq 0}  (X/K)$ 
(see the notation of \cite[1.2]{Abe-Caro-weights}).
There exists a constant $c (\E)$ such that, for any finite étale morphism of degree $d _{a}$ of the form
$\alpha _{a} \colon \widetilde{X} _{a} \to X _{a}$ with $\widetilde{X} _{a}$ connected, 
by putting  
$\widetilde{X}= 
\prod _{a=1} ^{n} \widetilde{X} _a$, 
$\alpha \colon \widetilde{X} \to X$
and 
$\widetilde{\E}:= \alpha ^{+} (\E) $,
we have
\begin{enumerate}
\item For any integer $r$, $\dim _K H ^{r} p _{\widetilde{X} +} (\widetilde{\E}) \leq  c (\E) \prod _{a=1} ^{n} d _{a}$.
\item For any integer $r\geq 1$, $\dim _K H ^{r} p _{\widetilde{X} +} (\widetilde{\E}) \leq  c (\E)\max \{ \prod _{a\in A} d _{a} \, | \, A \subset \{ 1, \dots, n\} \text{ and } |A| = n-r \} $.
\end{enumerate}
\end{thm}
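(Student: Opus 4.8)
The plan is to induct on $n$, the number of curve factors, following the strategy of \cite[4.5.1]{BBD} but replacing local acyclicity by the relative generic $\O$-coherence of Theorem \ref{lem-loc-acyc} and vanishing cycles by the Fourier transform together with the Abe--Marmora formula. The case $n=1$ is exactly Proposition \ref{Betti:curvecase}. For the inductive step I would write $X = X' \times X_n$ with $X' = \prod_{a=1}^{n-1} X_a$, let $\mathrm{pr}\colon X \to X'$ be the projection, and factor $p_{\widetilde{X}+} = p_{\widetilde{X}'+} \circ \widetilde{\mathrm{pr}}_+$. Composing with a Leray-type spectral sequence $H^s p_{\widetilde{X}'+} H^t \widetilde{\mathrm{pr}}_+(\widetilde{\E}) \Rightarrow H^{s+t} p_{\widetilde{X}+}(\widetilde{\E})$ reduces the estimate on $H^r p_{\widetilde{X}+}(\widetilde{\E})$ to two things: a rank estimate on the cohomology sheaves $H^t \widetilde{\mathrm{pr}}_+(\widetilde{\E})$ on a suitable dense open of $X'$, and the inductive hypothesis applied to each $H^t \widetilde{\mathrm{pr}}_+(\widetilde{\E})$ as a complex on $\widetilde{X}' = \prod_{a=1}^{n-1}\widetilde{X}_a$. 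The constant $c(\E)$ is assembled by taking a maximum over the finitely many $t$ of the constants produced by the inductive hypothesis, scaled by the generic ranks.

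\textbf{Controlling the fibrewise cohomology.} The heart of the matter is to bound $\mathrm{rk}\, H^t \widetilde{\mathrm{pr}}_+(\widetilde{\E})$, uniformly in the étale coverings $\alpha_a$, on a dense open $U' \subset X'$ independent of the $\alpha_a$ for $a<n$ (the dependence on $\alpha_n$ must be shown harmless, exactly as in the reductions of Theorem \ref{lem-loc-acyc}). First I would apply Theorem \ref{lem-loc-acyc} (with $\PP_1$ a smooth model of $X'$ and $\PP_2$ a smooth model of the projective curve $X_n$): this gives a dense open $\U_1$ over which $\widetilde{\mathrm{pr}}_+(\widetilde{\E})$ is $\O$-coherent, i.e.\ its cohomology sheaves are overconvergent isocrystals on $U'$, so $\mathrm{rk}$ makes sense. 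To bound that rank I would use the generic base change / fibration picture: over a generic point the morphism $\widetilde{\mathrm{pr}}$ restricts to (a finite étale cover of) a relative smooth curve, and $H^t$ of its pushforward is computed by the curve case. For $t$ away from the middle degree the rank is bounded by $\mathrm{rk}(\widetilde{\E}) = \mathrm{rk}(\E)$ directly from Lemma \ref{estiH-d} (the inequalities \ref{estiH-d+}, \ref{estiH-d!}) and Lemma \ref{i!j!} for the boundary contribution; for the middle degree the Euler characteristic is $\prod_{a} d_a$ times that of the uncovered situation by Lemma \ref{chi=d-times-chi}, so combined with the outer-degree bounds one gets $\mathrm{rk}\, H^t \widetilde{\mathrm{pr}}_+(\widetilde{\E}) \leq c_1(\E)\, d_n$ for suitable $c_1(\E)$. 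This is where the Fourier transform enters: to control the contribution of the non-smooth locus of $\E$ over $X_n$ (the irregularity at the points where $\E$ fails to be an isocrystal), I would pass to $\A^1$ inside $X_n$, apply $\mathscr{F}_\pi$, use acyclicity \ref{Fourier1}, the compatibility \ref{Fourierf^!f_+} with $f^!$ and $f_+$, and the Abe--Marmora formula \cite[4.1.6.(i)]{AbeMarmora} together with the elementary bound \ref{rem-Irr-esti-rk} to turn an irregularity estimate into a rank estimate stable under the coverings.

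\textbf{Assembling the two bounds.} Given the uniform bound $\mathrm{rk}\, H^t \widetilde{\mathrm{pr}}_+(\widetilde{\E}) \leq c_1(\E)\, d_n$ with $H^t \widetilde{\mathrm{pr}}_+(\widetilde{\E}) \in F\text{-}D^{\leq 0}(\widetilde{X}'/K)$ after the appropriate shift (one must check the perversity/amplitude bound $t \in \{-1,0\}$-type restriction coming from $\widetilde{\mathrm{pr}}$ having one-dimensional fibres, so that only finitely many $t$ contribute and the complex stays in the $\leq 0$ range), I would feed each such complex into the inductive hypothesis for the $n-1$ remaining curves. For part (1) this yields $\dim_K H^s p_{\widetilde{X}'+}(H^t\widetilde{\mathrm{pr}}_+\widetilde{\E}) \leq c_1(\E)d_n \cdot c(H^t\cdots) \prod_{a<n} d_a$, and summing the finitely many terms of the spectral sequence gives the bound $c(\E)\prod_{a=1}^n d_a$. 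For part (2), the sharper estimate with exponent $n-r$ comes from tracking, degree by degree, that the top cohomology $H^r$ can only be fed by the top contributions in the spectral sequence: the refined curve-case bound (part (2) of Proposition \ref{Betti:curvecase}, which gives the $d_1$-free bound in degree $1$) propagates inductively so that a factor $d_a$ appears only for the $a$ in an index set of size $n-r$. The main obstacle I anticipate is the uniformity in the coverings $\alpha_n$ of the generic rank bound on $H^t\widetilde{\mathrm{pr}}_+(\widetilde{\E})$ over $X_n$ --- i.e.\ making the Fourier/Abe--Marmora irregularity estimate genuinely independent of the étale cover of the projective curve $X_n$ --- and ensuring that the dense open $U' \subset X'$ can be chosen once and for all, independently of all the $\alpha_a$; this is the $p$-adic substitute for the $l$-adic local acyclicity input and requires careful bookkeeping of where Theorem \ref{lem-loc-acyc} and Lemma \ref{chi=d-times-chi} are applied.
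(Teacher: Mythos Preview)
There is a genuine structural gap: you project in the wrong direction. You set $X = X' \times X_n$ with $X' = \prod_{a<n} X_a$ and project to the $(n-1)$-fold product $X'$, intending to apply the inductive hypothesis to $H^t\widetilde{\mathrm{pr}}_+(\widetilde{\E})$ on $\widetilde{X}'$. But the inductive hypothesis requires a \emph{fixed} complex $\E'$ on $X'$ and then gives a constant $c(\E')$ valid for all coverings $\alpha'$. Here, base change gives $H^t\widetilde{\mathrm{pr}}_+(\widetilde{\E}) \cong \alpha'^{+}\bigl(\mathcal{H}^t\mathrm{pr}_+(\alpha_n^{+}\E)\bigr)$, and the complex $\mathcal{H}^t\mathrm{pr}_+(\alpha_n^{+}\E)$ on $X'$ depends on $\alpha_n$. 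So the constant produced by induction is $c\bigl(\mathcal{H}^t\mathrm{pr}_+(\alpha_n^{+}\E)\bigr)$, which you have no way to bound uniformly in $\alpha_n$; a rank bound on this complex is not enough, since $c(\cdot)$ in the theorem depends on the whole object, not merely on a numerical invariant. Your anticipated ``main obstacle'' is therefore not where you placed it (in the rank estimate) but in the very step where you invoke induction.

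The paper fixes this by projecting to a \emph{curve} factor, say $\mathrm{pr}\colon X = X_1 \times Y \to X_1$. Theorem~\ref{lem-loc-acyc} produces a dense open $U_1 \subset X_1$, independent of all $\alpha_a$, over which $\widetilde{\mathrm{pr}}_+(\widetilde{\E})$ is isocrystalline. One then chooses a closed point $t \in U_1$ once and for all; the fibre $\E_1 := i_t^{!}\E[1] \in F\text{-}D^{\le 0}(Y/K)$ is fixed, and $\mathrm{rk}\,\mathcal{H}^s\widetilde{\mathrm{pr}}_+(\widetilde{\E}) = \dim_K \mathcal{H}^s p_{\widetilde{Y}+}(\beta^{+}\E_1)$, so the inductive hypothesis applies directly to $\E_1$ and gives bounds in $d_2,\dots,d_n$ with constant $c(\E_1)$ depending only on $\E$. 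This is Step~I of the paper. The remaining work is the pushforward along the curve $\widetilde{X}_1$, where the spectral sequence has only two columns $r\in\{0,1\}$; $r=1$ is handled by the rank bound \ref{estiH-d!}, and for $r=0$ one must estimate $\chi(\widetilde{X}_1,\mathcal{H}^s\widetilde{\mathrm{pr}}_+(\widetilde{\E}))$. After reducing to $\alpha_1=\mathrm{id}$ via \ref{chi=d-times-chi} and to $X_1=\P^1_k$, $U_1=\A^1_k$ via Kedlaya, the Euler--Poincar\'e formula leaves $\mathrm{Irr}_\infty(\mathcal{H}^s\widetilde{\mathrm{pr}}_+(\widetilde{\FF}))$ to bound. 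This is where Fourier genuinely enters: Abe--Marmora bounds $\mathrm{Irr}_\infty$ by $\mathrm{rk}\,\mathscr{F}_\pi(\mathcal{H}^s\widetilde{\mathrm{pr}}_+(\widetilde{\FF}))$, and the commutation \ref{Fourierf^!f_+} identifies this with $\mathrm{rk}\,\mathcal{H}^s\widetilde{\mathrm{pr}}_+(\alpha^{+}\M)$ for the \emph{fixed} complex $\M := j_+\mathscr{F}_\pi(j^{!}\E)$, so Step~I applies again. Your placement of the Fourier argument on the fibre curve $X_n$ misses this point: in the correct scheme, Fourier lives on the base curve $X_1$ and serves precisely to convert the $\beta$-dependent irregularity into a Step~I rank bound for a second fixed object.
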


\begin{proof}
We proceed by induction on $n \in \N$.
The case $n=1$ has already been checked in \ref{Betti:curvecase}.
Suppose $n \geq 2$.
Let  
$\alpha _{a} \colon \widetilde{X} _{a} \to X _{a}$ 
be some finite étale morphism of degree $d _{a}$ with $\widetilde{X} _{a}$ connected, 
$\widetilde{X}= 
\prod _{a=1} ^{n} \widetilde{X} _a$,
$\alpha \colon \widetilde{X} \to X$
and 
$\widetilde{\E}:= \alpha ^{+} (\E) $.
We put 
$Y:= \prod _{a \not = 1} X _{a}$, 
$\widetilde{Y}:= \prod _{a \not = 1} \widetilde{X} _{a}$, 
$\beta \colon \widetilde{Y} \to Y$.
Let 
$\mathrm{pr}
\colon 
Y \to \Spec k $
and 
$\widetilde{\mathrm{pr}}
\colon 
\widetilde{Y} \to \Spec k $ 
be the projections
(recall that from the convention of this paper, for instance, 
$\widetilde{\mathrm{pr}}$ means also the projection $\widetilde{\mathrm{pr}}
\colon 
\widetilde{X}= \widetilde{X} _1 \times \widetilde{Y} \to \widetilde{X} _1$ etc.). 
From Lemma \ref{lem-loc-acyc} (recall also that from \cite[III.6.10]{sga1}, since $X _i$ and $\widetilde{X} _i$ are 
smooth curves, 
there exist some smooth proper formal $\V$-schemes $\X _i $ and $\widetilde{\X} _i$ which lift them, which reduce us to the geometrical situation of 
Lemma \ref{lem-loc-acyc}) 
there exists an affine open dense subvariety $U _{1}$ 
(independent of the choice of $\alpha _i$) of $X _{1}$ 
such that $\mathrm{pr} _{+} (\alpha _+ \widetilde{\E}) | U _{1} 
\in D ^{\mathrm{b}} _{\mathrm{isoc}} (U _{1}/K)$ (use \cite[2.2.12]{caro_courbe-nouveau} to check this latter property). 
Let $Z _1$ be the closed subvariety $X _{1} \setminus U _{1}$, 
$\widetilde{U} _1: =\alpha ^{-1} _{1} (U _1)$,
$\widetilde{Z} _1 := \alpha _{1} ^{-1} (Z _1)$.
Let $j \colon U _1\hookrightarrow X_1$ 
$i \colon Z _1\hookrightarrow X _1$ be the inclusions.
\medskip 

\noindent Step 0)
We have
$j ^{!}\widetilde{\mathrm{pr}} _{+} (\widetilde{\E})  =\widetilde{\mathrm{pr}} _{+} (\widetilde{\E}) | \widetilde{U} _{1} 
\in F \text{-} D ^{\mathrm{b}} _{\mathrm{isoc}} (\widetilde{U} _{1}/K)$. 
Indeed, from \ref{isoc-desc-finiteetale}, this is equivalent to 
prove
$\alpha _{1+}(\widetilde{\mathrm{pr}} _{+} (\widetilde{\E}) ) | U _{1}\in D ^{\mathrm{b}} _{\mathrm{isoc}} (U _{1}/K)$.
Then, we get the desired property from the isomorphism
$\alpha _{1+}(\widetilde{\mathrm{pr}} _{+} (\widetilde{\E}) )
\riso 
\mathrm{pr} _{+} (\alpha _+ \widetilde{\E})$ (checked by transitivity of the push-forwards).

\medskip 

\noindent Step I) With the notation \ref{ntn-rk}, we check 
that there exists a constant 
$c $ (only depending on $\E$) such that
\begin{itemize}
\item For any integer $s$, $\mathrm{rk} \,  \mathcal{H} ^{s} \widetilde{pr} _{+} ( \widetilde{\E}) \leq  c  \prod _{b=2} ^{n} d _{b}$.
\item For any integer $s\geq 1$, $\mathrm{rk} \,\mathcal{H} ^{s} \widetilde{pr} _{+} ( \widetilde{\E})
\leq  c \max \{ \prod _{b\in B} d _{b} \, | \, B \subset \{ 2, \dots, n\} \text{ and }|B| = n-1-s \} $.
\end{itemize}

{\it Proof.} Let $t$ be a closed point of $U _1$, $\widetilde{t}$ be a closed point of $\widetilde{U} _1$ such that
$\alpha _1 (\widetilde{t}) =t$. Let $i _t \colon t \hookrightarrow X _1$, 
$i _{\widetilde{t}} \colon \widetilde{t} \hookrightarrow \widetilde{X} _{1}$,
$\iota _{\widetilde{t}} \colon \widetilde{t} \hookrightarrow \widetilde{U} _{1}$ be the closed immersions.
Since the functor
$\iota _{\widetilde{t}} ^{!} [1]$ is acyclic on 
$F \text{-}D ^{\mathrm{b}} _{\mathrm{isoc}} (\widetilde{U} _{1}/K)$,
since $i _{\widetilde{t}} ^{!} [1] \riso (\iota _{\widetilde{t}} ^{!} [1]) \circ j ^{!}$, 
we obtain 
$i _{\widetilde{t}} ^{!} [1] (\mathcal{H} ^{s} \widetilde{pr} _{+} ( \widetilde{\E}))
\riso
\mathcal{H} ^{s} ( i _{\widetilde{t}} ^{!} \widetilde{pr} _{+} ( \widetilde{\E} [1] ))$.
Moreover, for such $\widetilde{t}$, we have
$\mathrm{rk} (\mathcal{H} ^{s} \widetilde{pr} _{+} ( \widetilde{\E}))
=
\dim _K i _{\widetilde{t}} ^{!} [1] \mathcal{H} ^{s} \widetilde{pr} _{+} ( \widetilde{\E})$.

We put $\E _{1}:=  i _{t}^{!}(\E) [1]$ and $\widetilde{\E} _1 := \beta ^{+} (\E _1)$.
Since $t\times Y$ is a smooth divisor of $X$, then $i _{t}^{!}[1]$ is right exact.
Hence $\E _{1} \in F \text{-}D ^{ \leq 0}  (Y/K)$ (we identify $Y$ with $t\times Y$).
To simplify the notation, 
we avoid to mention the isomorphism $\widetilde{t} \riso t$ induced by $\alpha _1$
in other words, we identify $t$ and $\widetilde{t}$ via this isomorphism). 
We get 
$i _t = \alpha _1 \circ i _{\widetilde{t}}$
and then 
$ \alpha \circ  i _{\widetilde{t}} = i _{t} \circ \beta \colon \widetilde{t} \times \widetilde{Y} \to X$.
Since $ \beta ^{+}=  \beta ^{!}$, 
we get by transitivity of the extraordinary inverse image the isomorphism
\begin{equation}
\label{EtoE1}
i _{\widetilde{t}}^{!}\widetilde{\E} [1]
 =
  i _{\widetilde{t}}^{!} \alpha ^{+} (\E) [1]
\riso
 \beta ^{+}  i _{t}^{!}(\E) [1]
=\beta ^{+} (\E _1)= 
\widetilde{\E} _1 .
\end{equation}
 Hence, 
\begin{equation}
\label{EtoE1bis}
i _{\widetilde{t}}^{!} \widetilde{pr} _ {+} (\widetilde{\E})[1] 
\underset{\cite[1.3.10]{Abe-Caro-weights}}{\riso}
 \widetilde{pr} _ {+} i _{\widetilde{t}}^{!} (\widetilde{\E})[1] 
\underset{\ref{EtoE1}}{\riso} 
 \widetilde{pr} _ {+} 
 (\widetilde{\E} _1)
 =
p _{\widetilde{Y}+}( \widetilde{\E} _1) ,
\end{equation}
where we have identified $\widetilde{Y}$ with 
$\widetilde{t} \times \widetilde{Y}$ in the last equality.
By composition, we obtain
$$
i _{\widetilde{t}} ^{!} [1] (\mathcal{H} ^{s} \widetilde{pr} _{+} ( \widetilde{\E}))
\riso
\mathcal{H} ^{s} ( i _{\widetilde{t}} ^{!} \widetilde{pr} _{+} ( \widetilde{\E} [1] ))
\underset{\ref{EtoE1bis}}{\riso} 
\mathcal{H} ^{s}
p _{\widetilde{Y}+}( \widetilde{\E} _1) $$
and then 
$\mathrm{rk} (\mathcal{H} ^{s} \widetilde{pr} _{+} ( \widetilde{\E}))
=
\dim _K 
\mathcal{H} ^{s}
p _{\widetilde{Y}+}( \widetilde{\E} _1)$.
We conclude by applying the induction hypothesis to $\E _1$ (notice that we do need Theorem \ref{lem-loc-acyc} : 
since $U _1$ is independent of the choice of $\alpha _i$ then so is $t$ and then $\E _1$).

\medskip

\noindent Step II) 

\noindent 1) By considering the exact triangle 
$j _{!} j ^{!} (\E) \to \E \to i _{+} i ^{+} (\E) \to +1$ (see \cite[1.1.8.(ii)]{Abe-Caro-weights})
we reduce to check the proposition 
for  
$\E= j _{!} j ^{!} (\E) $ or 
$\E= i _{+} i ^{+} (\E)$ (and because 
the functors 
$j _{!} j ^{!}$ and 
$i _{+} i ^{+}$ preserve
$D ^{ \leq 0}$).

\noindent 2) Suppose $\E= i _{+} i ^{+} (\E)$.
We can suppose that $Z _1$ is irreducible (i.e. since 
 $k$ is algebraically closed, $Z _1 \riso \Spec k$).
Consider the diagram with cartesian squares:
\begin{equation}
\notag
\xymatrix{
{\widetilde{Z} _1 \times \widetilde{Y}} 
\ar[r] ^-{\alpha _1}
\ar@{}[rd] ^-{}|\square
\ar@{^{(}->}[d] ^-{i}
&
{Z _1 \times \widetilde{Y}} 
\ar[r] ^-{\beta}
\ar@{^{(}->}[d] ^-{i}
\ar@{}[rd] ^-{}|\square
&
{Z _1 \times Y}
\ar[r] ^-{\mathrm{pr}}
\ar@{^{(}->}[d] ^-{i}
\ar@{}[rd] ^-{}|\square
& 
{Z _1} 
\ar@{^{(}->}[d] ^-{i}
\\ 
{\widetilde{X}}  
\ar[r] ^-{\alpha _1}
& 
{X _1 \times \widetilde{Y}} 
\ar[r] ^-{\beta}
& 
{X }
\ar[r] ^-{\mathrm{pr}}
& 
{X_1 .}
}
\end{equation}
We put
$\G:= i ^{+} (\E)$, 
$\widetilde{\G}:= \beta ^{+} (\G)$.
Since $\widetilde{Z} _1$ is $d _{1}$ copies of $Z _1$, 
then we get 
$\alpha _{1+}\alpha _1 ^{+}  \widetilde{\G} \riso (\widetilde{\G} )^{d _1}$.
This implies
\begin{equation}
\label{induc-hyp1}
p _{\widetilde{X} +} (\widetilde{\E})
=
p _{\widetilde{X} +} (\alpha ^{+} ( i _{+} \G ) )
\underset{\cite[1.3.10]{Abe-Caro-weights}}{\riso}
p _{\widetilde{X} +} ( i _{+} \alpha _1 ^{+}   \beta ^{+} \G)
\riso 
p _{\widetilde{Y} +}  \alpha _{1+}\alpha _1 ^{+}   (\widetilde{\G})
\riso 
(p _{\widetilde{Y} +} (\widetilde{\G})) ^{d _1},
\end{equation}
where in the second isomorphism we have identified 
$\widetilde{Y}$ with $Z _1 \times \widetilde{Y}$.
We conclude by applying the induction hypothesis to 
$\G$.
\medskip 

\noindent 3) Suppose now 
$\E= j _{!} j ^{!} (\E) $.

a) We check that
there exists a constant 
$c $ (only depending on $\E$) such that
\begin{itemize}
\item For any $s$, $ \dim _K H ^{1} p _{\widetilde{X} _{1}+} ( \mathcal{H} ^{s}\widetilde{pr} _{+} ( \widetilde{\E})) \leq  c  \prod _{b=2} ^{n} d _{b}$.
\item For any $s\geq 1$, $\dim _K H ^{1} p _{\widetilde{X} _{1}+} ( \mathcal{H} ^{s}\widetilde{pr} _{+} ( \widetilde{\E}))
\leq  c \max \{ \prod _{b\in B} d _{b} \, | \, B \subset \{ 2, \dots, n\} \text{ and }|B| = n-1-s \} $.
\end{itemize}

Proof: We put 
$\FF= j ^{!} (\E) $, 
$\widetilde{\FF}:= \alpha ^{+} (\FF)$. 
By transitivity of the extraordinary push-forward,
we get the first isomorphism
\begin{equation}
\label{StepII.3iso}
j _{!}  \widetilde{pr} _{+} ( \widetilde{\FF})
\riso 
\widetilde{pr} _{+} j _! ( \widetilde{\FF})
\underset{\cite[1.3.10]{Abe-Caro-weights}}{\riso} 
\widetilde{pr} _{+} ( \widetilde{\E}).
\end{equation}
 Moreover, since $j ^! j _! \riso Id$, we get the first isomorphism
 $ \widetilde{pr} _{+} ( \widetilde{\FF})
\riso 
j ^! j _! \widetilde{pr} _{+} ( \widetilde{\FF})
\underset{\ref{StepII.3iso}}{\riso} 
j ^{!} \widetilde{pr} _{+} (\widetilde{\E}) $.
Hence, from the Step 0, this implies
 $ \widetilde{pr} _{+} ( \widetilde{\FF})
\in D ^{\mathrm{b}} _{\mathrm{isoc}} (\widetilde{U} _{1}/K)$.

Since $j _!$ is exact, we get from \ref{StepII.3iso} the isomorphism
$j _{!}  \mathcal{H} ^{s}  \widetilde{pr} _{+} ( \widetilde{\FF})
\riso 
 \mathcal{H} ^{s}\widetilde{pr} _{+} ( \widetilde{\E})$.
 By applying the functor 
 $p _{\widetilde{X} _{1}+}$ to this last isomorphism,
 we get by transitivity of the extraordinary push-forward 
 \begin{equation}
 \label{StepII.3.1so2}
 p _{\widetilde{U} _{1}!} ( \mathcal{H} ^{s}\widetilde{pr} _{+} ( \widetilde{\FF}))
\riso
p _{\widetilde{X} _{1}+} ( \mathcal{H} ^{s}\widetilde{pr} _{+} ( \widetilde{\E})).
 \end{equation}
Applying $H ^{1} $ to \ref{StepII.3.1so2}, we get the first equality: 
$$\dim _K H ^{1} p _{\widetilde{X} _{1}+} ( \mathcal{H} ^{s}\widetilde{pr} _{+} ( \widetilde{\E}))
= 
\dim _K H ^{1} p _{\widetilde{U} _{1}!} ( \mathcal{H} ^{s}\widetilde{pr} _{+} ( \widetilde{\FF}))
\leq 
\mathrm{rk} 
 \mathcal{H} ^{s}\widetilde{pr} _{+} ( \widetilde{\FF})
 =
\mathrm{rk} 
 \mathcal{H} ^{s}\widetilde{pr} _{+} ( \widetilde{\E}),$$
the inequality in the middle is a consequence of \ref{estiH-d!}.
From the step I), we get the desired estimate.

b) 
We have the spectral sequence 
\begin{equation}
\notag
E _{2} ^{r,s} = H ^{r} p _{\widetilde{X} _{1}+}
\mathcal{H} ^{s} \widetilde{pr} _{+} ( \widetilde{\E})
\Rightarrow 
H ^{r+s} p _{\widetilde{X}+}  ( \widetilde{\E}).
\end{equation}
Since $\widetilde{U} _{1}$ is affine of dimension $1$, 
using the isomorphism 
\ref{StepII.3.1so2}, 
we get
$E _{2} ^{r,s}= 0$ when $r \not \in \{ 0,1\}$ (use also \ref{fsmoothreldimvanish} and the respective case of 
\cite[1.3.13.(i)]{Abe-Caro-weights}).
Hence, by using the step a) (still valid if we vary the order of 
$X _1,\dots, X _n$),
it remains to check that there exists a constant $c (\E)$ such that 
\begin{itemize}
\item For any $s$, $|\chi ( \widetilde{X} _1,  \mathcal{H} ^{s}\widetilde{pr} _{+} ( \widetilde{\E})) |\leq  c (\E) \prod _{a=1} ^{n} d _{a}$.
\item For any $s\geq 1$, $|\chi ( \widetilde{X} _1,  \mathcal{H} ^{s}\widetilde{pr} _{+} ( \widetilde{\E}))|
\leq  c (\E)\max \{ \prod _{a\in A} d _{a} \, | \, A \subset \{ 1, \dots, n\} \text{ and }  |A| = n-s \} $.
\end{itemize}

i) In this step, we reduce to the case where $\alpha _1$ is the identity.
For this purpose, consider the following diagram
\begin{equation}
\notag
\xymatrix{
{\widetilde{X} _1 \times \widetilde{Y}} 
\ar[r] ^-{\alpha _1}
\ar[d] ^-{\widetilde{pr}}
\ar@{}[rd] ^-{}|\square
&
{X _1 \times \widetilde{Y}}
\ar[r] ^-{\beta}
\ar[d] ^-{\widetilde{pr}}
& 
{X} 
\ar[ld] ^-{\mathrm{pr}}
\\ 
{\widetilde{X} _1} 
\ar[r] ^-{\alpha _1}
& 
{X _1 .}
& 
{}
}
\end{equation}
We have 
$\chi ( \widetilde{X} _1,  \mathcal{H} ^{s}\widetilde{pr} _{+} ( \widetilde{\E}))
=
\chi ( X _1, \alpha _{1+} \mathcal{H} ^{s}\widetilde{pr} _{+} ( \widetilde{\E}))$.
By using the transitivity of the pull-back, 
we get the first isomorphism: 
$\alpha _{1+} \widetilde{pr} _{+} ( \widetilde{\E})
\riso
\alpha _{1+} \widetilde{pr} _{+} \alpha _{1} ^{+} ( \beta ^{+} (\E))
\underset{\cite[1.3.10]{Abe-Caro-weights}}{\riso}
\alpha _{1+} \alpha _{1} ^{+}  \widetilde{pr} _{+} ( \beta ^{+} (\E))
$.
Since $\alpha _{1+} $ and $\alpha _{1} ^{+}$ are exact,
this implies the isomorphism
$\alpha _{1+} \mathcal{H} ^{s} \widetilde{pr} _{+} ( \widetilde{\E})
\riso
\alpha _{1+} \alpha _{1} ^{+} \mathcal{H} ^{s}  \widetilde{pr} _{+} ( \beta ^{+} (\E))$.
Hence, 
$$\chi ( X _1, \alpha _{1+} \mathcal{H} ^{s}\widetilde{pr} _{+} ( \widetilde{\E}))
=
\chi ( X _1, \alpha _{1+} \alpha _{1} ^{+} \mathcal{H} ^{s}  \widetilde{pr} _{+} ( \beta ^{+} (\E))).$$
From Lemma \ref{chi=d-times-chi} 
(recall also that from \cite[III.6.10]{sga1}, 
there exist some smooth proper formal $\V$-scheme $\X _1 $  which is a lifting of 
$X _1$), 
we have 
$\chi ( X _1, \alpha _{1+} \alpha _{1} ^{+} \mathcal{H} ^{s}  \widetilde{pr} _{+} ( \beta ^{+} (\E)))=
d _1\chi ( X _1, \mathcal{H} ^{s}  \widetilde{pr} _{+} ( \beta ^{+} (\E)))$.
Hence, we have checked that 
$\chi ( \widetilde{X} _1,  \mathcal{H} ^{s}\widetilde{pr} _{+} ( \widetilde{\E}))
=
d _1\chi ( X _1, \mathcal{H} ^{s}  \widetilde{pr} _{+} ( \beta ^{+} (\E)))$, 
which yields the desired result.

\medskip

ii) We suppose from now that $\alpha _1$ is the identity.
We prove that we can reduce to the case where $X _1= \P _k ^{1}$  and $U  _1 = \A ^{1} _{k}$.
Indeed, from Kedlaya's main theorem of \cite{Kedlaya-coveraffinebis}, by shrinking $U _1$ is necessary,
there exists a finite morphism 
$f \colon X _1 \to \P _k ^{1}$ such that $U _1 =f ^{-1} (\A _k ^{1})$
and the induced morphism $g \colon U _1 \to \A _k ^{1}$ is etale.
We get the cartesian squares: 
\begin{equation}
\xymatrix{
{\widetilde{X}} 
\ar@{}[rd] ^-{}|\square
\ar[r] _-{\alpha}
\ar@/^0.8pc/[rr] ^-{\widetilde{pr}}
\ar[d] ^-{f}
& 
{X} 
\ar[d] ^-{f}
\ar@{}[rd] ^-{}|\square
\ar[r] _-{pr}
& 
{X _1} 
\ar[d] ^-{f}
\ar@{}[rd] ^-{}|\square
& 
{U _1} 
\ar[d] ^-{g}
\ar[l] ^-{j}
\\ 
{\P _k ^{1} \times \widetilde{Y}} 
\ar[r] ^-{\alpha}
\ar@/_0.8pc/[rr] _-{\widetilde{pr}}
& 
{\P _k ^{1} \times Y} 
\ar[r] ^-{pr}
&
{\P _k ^{1} } 
&
{\A _k ^{1} .} 
\ar[l] _-{j}
}
\end{equation}
Set $\E ' := f _{+} (\E)=
f _{+} j _{!} j ^{!} (\E) \riso j _! g _+ j ^{!} (\E) 
\underset{\cite[1.3.10]{Abe-Caro-weights}}{\riso}
j _! j ^{!} f _+ (\E)  
=
j _! j ^{!} (\E ') $.
Set 
$\widetilde{\E} ' := \alpha ^{+} (\E') $.
We have the isomorphisms
$j ^{!} \mathrm{pr} _+ \alpha _+ (\widetilde{\E}')
\underset{\tiny \cite[1.3.10]{Abe-Caro-weights}}{\riso}
j ^{!} \mathrm{pr} _+ \alpha _+ f _+ (\widetilde{\E})
\riso 
j ^{!} f _+  \mathrm{pr} _+ \alpha _+ (\widetilde{\E})
\underset{\tiny \cite[1.3.10]{Abe-Caro-weights}}{\riso}
g _+  j ^{!}  \mathrm{pr} _+ \alpha _+ (\widetilde{\E})$.
Since
$j ^{!}  \mathrm{pr} _+ \alpha _+ (\widetilde{\E}) \in D ^{\mathrm{b}} _{\mathrm{isoc}} (U _{1}/K)$,
since $g$ is finite and etale,
using Lemma \ref{isoc-desc-finiteetale}, 
this yields that
$j ^{!} \mathrm{pr} _+ \alpha _+ (\widetilde{\E}')
\in 
 D ^{\mathrm{b}} _{\mathrm{isoc}} (\A ^1 _k/K)$.

To finish this step (ii), it remains to compare the Euler-Poincare characteristic. Since 
$f _+ j _! \riso j _{!} g_+$, 
since $g _+$ (because $g$ is finite and etale)
and $j _!$ are exact, 
then we get 
\begin{equation}
\label{StepII3b.(ii)iso}
\mathcal{H} ^{s} f _{+} \widetilde{pr} _{+} ( \widetilde{\E})
\underset{\ref{StepII.3iso}}{\riso} 
 \mathcal{H} ^{s} f _{+} j _!  \widetilde{pr} _{+} ( \widetilde{\FF})
\riso 
j _! g _+ \mathcal{H} ^{s}  \widetilde{pr} _{+}  ( \widetilde{\FF})
\riso 
f _+ j _! \mathcal{H} ^{s}  \widetilde{pr} _{+} ( \widetilde{\FF})
\underset{\ref{StepII.3iso}}{\riso} 
 f _{+}  \mathcal{H} ^{s}\widetilde{pr} _{+} ( \widetilde{\E}).
\end{equation}
This yields
$ \mathcal{H} ^{s}  \widetilde{pr} _{+} ( \widetilde{\E} ')
\underset{\cite[1.3.10]{Abe-Caro-weights}}{\riso}
 \mathcal{H} ^{s} f _{+} \widetilde{pr} _{+} ( \widetilde{\E})
 \underset{\ref{StepII3b.(ii)iso}}{\riso}
 f _{+}  \mathcal{H} ^{s}\widetilde{pr} _{+} ( \widetilde{\E}) $
 and then we get the last equality
$$\chi ( X _1,  \mathcal{H} ^{s}\widetilde{pr} _{+} ( \widetilde{\E}))
=
\chi ( \P _k ^{1}, f _{+}  \mathcal{H} ^{s}\widetilde{pr} _{+} ( \widetilde{\E}))
=
\chi ( \P _k ^{1}, \mathcal{H} ^{s}\widetilde{pr} _{+} ( \widetilde{\E}')).$$

\medskip 

iii) We suppose from now $X  _1 = \P ^{1} _{k}$ and $U  _1 = \A ^{1} _{k}$.
Recall that $\alpha _1= id$ and that we have checked in Step II.3.a) 
that 
$\widetilde{pr} _{+}  ( \widetilde{\FF})\in D ^{\mathrm{b}} _{\mathrm{isoc}} (U _{1}/K)$.
Hence we can apply Lemma \ref{i!j!} : for any $m \in \{0,1\}$ we have the inequality
$$\mathcal{H} ^{m} (i ^{!}  j _! \mathcal{H} ^{s}  \widetilde{pr} _{+} ( \widetilde{\FF}))
\leq 
\mathrm{rk} ( \mathcal{H} ^{s}  \widetilde{pr} _{+}  ( \widetilde{\FF}))
\underset{\ref{StepII.3iso}}{=}
\mathrm{rk} ( \mathcal{H} ^{s}  \widetilde{pr} _{+} ( \widetilde{\E})).$$
From the step I, this latter is well estimated. 
This implies that 
$\chi ( X _1, i _+ i ^{!} j _! \mathcal{H} ^{s}\widetilde{pr} _{+} ( \widetilde{\FF}))$ 
is well estimated. 

From \ref{StepII.3iso}, we obtain
 $\chi ( X _1,  \mathcal{H} ^{s}\widetilde{pr} _{+} ( \widetilde{\E}))
= 
\chi ( X _1, j _! \mathcal{H} ^{s}\widetilde{pr} _{+} ( \widetilde{\FF}))$.
Moreover, 
by using the exact triangle 
$i _+ i ^!\to \mathrm{id} \to j _+ j ^!  \to +1$ for 
$j _! \mathcal{H} ^{s}\widetilde{pr} _{+} ( \widetilde{\FF})$
 (see \cite[1.1.8.(ii)]{Abe-Caro-weights}), 
 since $j ^! j _! = \mathrm{id}$,
 we get the equality
$ \chi ( X _1, j _! \mathcal{H} ^{s}\widetilde{pr} _{+} ( \widetilde{\FF}))
=
\chi ( X _1, i _+ i ^{!} j _! \mathcal{H} ^{s}\widetilde{pr} _{+} ( \widetilde{\FF}))
+
\chi ( X _1, j _+ \mathcal{H} ^{s}\widetilde{pr} _{+} ( \widetilde{\FF}))$.
Hence, we reduce to estimate
$\chi ( X _1, j _+ \mathcal{H} ^{s}\widetilde{pr} _{+} ( \widetilde{\FF}))$. 

From Christol-Mebkhout's Theorem \cite[5.0-10]{christol-MebkhoutIV} (as described in the introduction), 
we have the following $p$-adic Euler-Poincare formula:
$$\chi ( X _1, j _+ \mathcal{H} ^{s}\widetilde{pr} _{+} ( \widetilde{\FF}))
=
\chi ( U _1, \mathcal{H} ^{s}\widetilde{pr} _{+} ( \widetilde{\FF}))
=
\mathrm{rk} \, (\mathcal{H} ^{s}\widetilde{pr} _{+} ( \widetilde{\FF})) 
\chi ( U _1)
- \mathrm{Irr} _{\infty} (\mathcal{H} ^{s}\widetilde{pr} _{+} ( \widetilde{\FF})) ,$$
where $\infty$ is the complement of $U _1$ in $X _1$, i.e. of $\A ^1 _k$ in $\P ^{1} _{k}$. 

To simplify notation, we put 
$\mathscr{F} := \mathscr{F} _{\psi} [1]$ (see the notation \ref{Fourier}) and then from \ref{Fourier1} the image by $\mathscr{F}$ of a module is a module.
Since $\G ^s :=\mathcal{H} ^{s} \widetilde{pr} _{+} ( \widetilde{\FF}) \in F\text{-}\mathrm{Isoc} (U _1/K)$,  
then it has no singular points (see Definition \cite[2.4.2]{AbeMarmora}). Hence, 
Abe-Marmora's formula \cite[4.1.6.(i)]{AbeMarmora} can be formulated of the form (see also the notation  \cite[2.4.1, 4.1.1]{AbeMarmora}): 
\begin{equation}
\label{Irr1}
 -\mathrm{rk} (\mathscr{F} ( \G ^s))
= 
\mathrm{rk} ( (\G ^s |\eta _\infty) _{]1,\infty  [})
- \mathrm{Irr}  ( (\G ^s |\eta _\infty) _{]1,\infty  [}). 
\end{equation}
With \cite[2.3.2.2]{AbeMarmora} (resp. \ref{rem-Irr-esti-rk}), 
we get the equality (resp. inequality):
\begin{equation}
\label{Irr2}
\mathrm{Irr} _{\infty} (\G ^s)
= \mathrm{Irr}  ( \G ^s |\eta _\infty)
\leq 
\mathrm{rk}  ( \G ^s |\eta _\infty)
+
\mathrm{Irr}  ( (\G ^s |\eta _\infty) _{]1,\infty  [}).
\end{equation}
Since $\mathrm{rk}  ( \G ^s)= \mathrm{rk}  ( \G ^s |\eta _\infty)$
and $\mathrm{rk} ( (\G ^s |\eta _\infty) _{]1,\infty  [}) \leq \mathrm{rk}  ( \G ^s)$, 
we get from \ref{Irr1} and \ref{Irr2} the inequality :
$$\mathrm{Irr} _{\infty} (\mathcal{H} ^{s}\widetilde{pr} _{+} ( \widetilde{\FF})) \leq 
\mathrm{rk} (\mathscr{F}  ( \mathcal{H} ^{s} \widetilde{pr} _{+} ( \widetilde{\FF}) ) )
+
2\mathrm{rk} ( \mathcal{H} ^{s} \widetilde{pr} _{+} ( \widetilde{\FF}) ).$$ 
Hence, we reduce to check the step iv).

iv) In this step, we estimate
$\mathrm{rk} (\mathscr{F}  ( \mathcal{H} ^{s} \widetilde{pr} _{+} ( \widetilde{\FF}) ) )$.
Since $\beta = \alpha$,
we get the diagram
\begin{equation}
\xymatrix{
{\widetilde{X}} 
\ar@{}[rd] ^-{}|\square
\ar[r] _-{\alpha}
\ar@/^0.8pc/[rr] ^-{\widetilde{pr}}
& 
{X} 
\ar@{}[rd] ^-{}|\square
\ar[r] _-{pr}
& 
{X _1} 
\\ 
{\widetilde{U}} 
\ar[u] ^-{j}
\ar[r] ^-{\alpha}
\ar@/_0.8pc/[rr] _-{\widetilde{pr}}
& 
{U} 
\ar[u] ^-{j}
\ar[r] ^-{pr}
&
{U  _1 ,} 
\ar[u] ^-{j}
}
\end{equation}
where $U = \A ^1 _Y$, $\widetilde{U}= \A ^1 _{\widetilde{Y}}$.
Set $\M := j _{+}\mathscr{F}  (j ^{!}\E)$,
where $\mathscr{F} := \mathscr{F} _{\psi} [1] \colon 
F\text{-}D^{\leq 0} _{\mathrm{ovhol}}(\mathbb{A} _Y^1)
\to 
F\text{-}D^{\leq 0} _{\mathrm{ovhol}}(\mathbb{A} _Y^1)$ (see the notation \ref{Fourier} 
and  \ref{Fourier1} for the acyclicity). 
From the step I) applied to 
$\M $,
there exists a constant 
$c $ (only depending on $\E$) such that
\begin{itemize}
\item For any $s$, $\mathrm{rk} (\mathcal{H} ^{s} \widetilde{pr} _{+} ( \alpha ^{+ }\M)) \leq  c  \prod _{b=2} ^{n} d _{b}$.
\item For any $s\geq 1$, $\mathrm{rk} (\mathcal{H} ^{s} \widetilde{pr} _{+} ( \alpha ^{+ }\M ))
\leq  c \max \{ \prod _{b\in B} d _{b} \, | \, B \subset \{ 2, \dots, n\} \text{ and }|B| = n-1-s \} $.
\end{itemize}
It remains to check that 
$\mathrm{rk} (\mathscr{F}  ( \mathcal{H} ^{s} \widetilde{pr} _{+} ( \widetilde{\FF}) ) )
=
\mathrm{rk} (\mathcal{H} ^{s} \widetilde{pr} _{+} ( \alpha ^{+ }\M) )$. 
By base change (recall that $\alpha ^!= \alpha ^+$) and next by using \ref{Fourierf^!f_+}, we have
$$\widetilde{pr} _{+} ( \alpha ^{+ }\M)
= 
\widetilde{pr} _{+} ( \alpha ^{+ } j _{+}\mathscr{F}  (\FF))
\riso 
\widetilde{pr} _{+}  j _{+}  \alpha ^{+ }   (\mathscr{F}  (\FF))
\riso
j _{+}  \widetilde{pr} _{+}   \alpha ^{+ }   (\mathscr{F}  (\FF))
\riso 
j _{+}  \widetilde{pr} _{+}  (\mathscr{F}  (\widetilde{\FF}))
\riso 
j _{+}   (\mathscr{F}  ( \widetilde{pr} _{+} \widetilde{\FF})),
$$
where $\mathscr{F} := \mathscr{F} _{\psi} [1] \colon 
F\text{-}D^{\mathrm{b}}_{\mathrm{ovhol}}(\mathbb{A} _S^1)
\to 
F\text{-}D^{\mathrm{b}}_{\mathrm{ovhol}}(\mathbb{A} _S^1)$ is the shifted Fourier transform
for respectively $S=Y$, $S = \widetilde{Y}$ or $S =\Spec k$.
Since $\mathscr{F} $ and $j _+$
are acyclic  (see \ref{Fourier1} ), 
then we get 
$\mathcal{H} ^{s} \widetilde{pr} _{+} ( \alpha ^{+ }\M)
\riso 
j _{+}   (\mathscr{F}  ( \mathcal{H} ^{s}\widetilde{pr} _{+} \widetilde{\FF}))$.
Since 
$\mathrm{rk} (\mathscr{F}  ( \mathcal{H} ^{s}\widetilde{pr} _{+} \widetilde{\FF})
=
\mathrm{rk} (j _{+}   (\mathscr{F}  ( \mathcal{H} ^{s}\widetilde{pr} _{+} \widetilde{\FF}))$ (recall the notation of \ref{ntn-rk}),
we can conclude. 

\end{proof}

From Theorem \ref{BBD4.5.1}, the reader can 
check the $p$-adic analogues of corollaries \cite[4.5.2--5]{BBD} by copying the proofs. 
Moreover, from \cite{Abe-Caro-weights}, we have a theory of weight in the framework of arithmetic $\D$-modules. 
For instance, we have checked the stability of the weight under Grothendieck six operations, i.e. the $p$-adic analogue of Deligne famous work
in \cite{deligne-weil-II}), which is also explained in \cite[5.1.14]{BBD}.
In \cite[5.2.1]{BBD}, a reverse implication was proved.
The reader can check that we can copy the proof without further problems (i.e., we only have to check that we have nothing new to check, e.g. we have 
already \ref{estiH-d} or the purity of the middle extension of some pure unipotent $F$-isocrystal as given in \cite[3.6.3]{Abe-Caro-weights}).
For the reader, let us write this $p$-adic version and its important corollary \cite[5.3.1]{BBD} (this corollary is proved in \cite{Abe-Caro-weights} in 
another way, but Theorem \ref{5.2.1BBD} below is a new result). 
\begin{thm}
[{\cite[5.2.1]{BBD}}]
\label{5.2.1BBD}
We suppose $k= \F _{p ^s}$ is finite and that $F$ means the $sth$ power of Frobenius. 
Choose an isomorphism of the form $\iota \colon \overline{\Q} _p \riso \C$. 
Let $X$ be a $k$-variety and $\E \in F \text{-}\mathrm{Ovhol} (X/K)$.
We suppose that, for any etale morphism 
$\alpha \colon U \to X$ with $U$ affine, the $K$-vector space
 $H ^{0} (p _{U +}( \alpha ^{+}(\E))$ is $\iota$-mixed of weight $\geq w$. 
 Then $\E$ is $\iota$-mixed of weight $\geq w$.

\end{thm}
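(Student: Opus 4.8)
The plan is to transcribe the proof of \cite[5.2.1]{BBD} into the language of overholonomic $F$-$\D$-modules and to verify that all the geometric inputs it uses are already at hand: the stability of ``$\iota$-mixed of weight $\geq w$'' under the six operations and the purity of the middle extension of a pure unipotent $F$-isocrystal (see \cite{Abe-Caro-weights}, in particular \cite[3.6.3]{Abe-Caro-weights}), the affine Artin--Lefschetz vanishing for $p _{U+}$ with $U$ affine, Kedlaya's semistable reduction, the rank bound of Lemma \ref{estiH-d}, and the $p$-adic Euler--Poincar\'e formula of Christol--Mebkhout \cite{christol-MebkhoutIV}. Note that, contrary to the main Betti estimate, Theorem \ref{BBD4.5.1} itself plays no role here.

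The first step is a d\'evissage. Since ``weight $\geq w$'' for a complex is defined through its perverse cohomology $F$-modules, and since $H ^{0} p _{U+}$ is left $t$-exact when $U$ is affine (affine Artin--Lefschetz, valid for overholonomic complexes), a truncation and spectral sequence argument reduces the statement to $\E$ a single overholonomic $F$-module; one then runs a Noetherian induction on $\mathrm{Supp} (\E)$. Picking a smooth dense open immersion $j \colon U \hookrightarrow \mathrm{Supp} (\E)$ on which $\E$ restricts to an object of $F\text{-}\mathrm{Isoc} ^{\dag \dag} (U/K)$, with complementary closed immersion $i$, the localization triangle $j _{!} j ^{!} \E \to \E \to i _{+} i ^{+} \E \to +1$ lets one treat the two outer terms independently: both the hypothesis and the conclusion propagate to $i _{+} i ^{+} \E$, to which the induction hypothesis applies as in \cite{BBD}, and to $j _{!} j ^{!} \E$, for which one uses that an affine scheme \'etale over $U$ is affine \'etale over $X$, so that the hypothesis for $\E$ restricts to the corresponding hypothesis for $j ^{!} \E$ on $U$. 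One is thus reduced to $\E = j _{!} \FF$ with $\FF \in F\text{-}\mathrm{Isoc} ^{\dag \dag} (U/K)$ and $U$ smooth affine.

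Next comes the reduction to curves. After shrinking $U$, generic smoothness provides a smooth morphism $g \colon U \to S$ with $S$ smooth affine of dimension $\dim U - 1$ and one-dimensional fibres; writing $p _{U+} = p _{S+} \circ g _{+}$ and using Lemma \ref{estiH-d} to bound the ranks, hence the generic lisseness, of the modules $\mathcal{H} ^{s} g _{+} (\FF)$, a Leray spectral sequence argument together with the induction on $\dim U$ brings one down to $\dim U = 1$. For a smooth affine curve one invokes finite \'etale descent: by Kedlaya's semistable reduction there is a connected finite \'etale cover on which the pullback of $\FF$ becomes log-extendable, the hypothesis is inherited by that cover since a finite \'etale map composed with an affine \'etale map is affine \'etale, and then Lemma \ref{i!j!}, Lemma \ref{isoc-desc-finiteetale} and the purity statement \cite[3.6.3]{Abe-Caro-weights} allow one to pin down the weights of $\FF$ by combining the hypothesis, applied to $U$ and to all its affine opens (which computes, in the conventions of \cite{Abe-Caro-weights}, the relevant $H ^{1} _{\mathrm{rig}}$ with coefficients in $\FF$), with the $p$-adic Euler--Poincar\'e formula and the $p$-adic Weil~II for curves of \cite{Abe-Caro-weights}.

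The formal part of this translation (d\'evissage, localization triangle, Leray) is routine. I expect the main obstacle to be the curve base case: one must reproduce, purely in terms of rigid cohomology of affine curves with overconvergent $F$-isocrystal coefficients and of the weight filtration, the $\ell$-adic local-monodromy bookkeeping of \cite[5.2.1]{BBD}, and check that after the passage to the log-extendable (unipotent) situation the only non-formal ingredient required is exactly the purity of middle extensions of pure unipotent $F$-isocrystals furnished by \cite[3.6.3]{Abe-Caro-weights}. A minor additional point, to be handled as in the proof of Theorem \ref{lem-loc-acyc}, is to arrange that the dense open of $S$ (resp.\ of the curve) produced along the way can be chosen independently of the auxiliary finite \'etale covers, so that the quantifier over all affine \'etale $\alpha$ in the hypothesis remains available at each stage of the induction.
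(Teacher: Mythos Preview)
Your proposal is correct and matches the paper's approach: the paper does not give a detailed argument either, but simply states that one can copy the proof of \cite[5.2.1]{BBD} verbatim once one has verified that the required $p$-adic inputs are available, singling out precisely Lemma~\ref{estiH-d} and the purity of the middle extension of a pure unipotent $F$-isocrystal \cite[3.6.3]{Abe-Caro-weights} --- both of which you identify. Your outline is a faithful (and more explicit) expansion of that remark, and your observation that Theorem~\ref{BBD4.5.1} is not itself needed here is consistent with the paper's presentation.
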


\begin{coro}
[{\cite[5.3.1]{BBD}}]
With the notation \ref{5.2.1BBD}, if $\E$ is $\iota$-mixed of weight $\geq w$ (resp. $\leq w$), then any subquotient of $\E$ is 
$\iota$-mixed of weight $\geq w$ (resp. $\leq w$).
\end{coro}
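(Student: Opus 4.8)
The plan is to transfer the argument of \cite[5.3.1]{BBD}, with Theorem~\ref{5.2.1BBD} as the only substantial input. Since $\DD _{X}$ is an exact contravariant auto-equivalence of $F \text{-}\mathrm{Ovhol} (X/K)$ which carries sub-objects to quotients and quotients to sub-objects, and which sends an $\iota$-mixed object of weight $\leq w$ to one of weight $\geq -w$ and conversely (see \cite{Abe-Caro-weights}), it is enough to prove the statement for weight $\geq w$. Moreover, a subquotient of $\E$ being a quotient of a sub-object of $\E$, it suffices to prove: for every short exact sequence $0 \to \E ' \to \E \to \E '' \to 0$ in $F \text{-}\mathrm{Ovhol} (X/K)$ with $\E$ $\iota$-mixed of weight $\geq w$, both $\E '$ and $\E ''$ are $\iota$-mixed of weight $\geq w$. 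I will use freely, all coming from \cite{Abe-Caro-weights}, that $\alpha ^{+}$ (for $\alpha$ \'etale) and $p _{U +}$ preserve $\iota$-mixedness and the property of being of weight $\geq w$, together with the Artin-type vanishing $\H ^{i} p _{U+} (\M) = 0$ for $i < 0$ whenever $U$ is affine and $\M$ is a module.

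For $\E '$, let $\alpha \colon U \to X$ be \'etale with $U$ affine. As $\alpha ^{+}$ is exact, $0 \to \alpha ^{+} \E ' \to \alpha ^{+} \E \to \alpha ^{+} \E '' \to 0$ is a short exact sequence of modules; applying $p _{U +}$ and using the Artin vanishing $\H ^{-1} p _{U+} (\alpha ^{+} \E '') = 0$ we obtain a monomorphism $\H ^{0} p _{U+} (\alpha ^{+} \E ') \hookrightarrow \H ^{0} p _{U+} (\alpha ^{+} \E)$. Since $\E$ is $\iota$-mixed of weight $\geq w$, so is $\alpha ^{+} \E$, hence $p _{U+} (\alpha ^{+} \E)$ is an $\iota$-mixed complex of weight $\geq w$ and its $\H ^{0}$ is $\iota$-mixed of weight $\geq w$; therefore so is its submodule $\H ^{0} p _{U+} (\alpha ^{+} \E ')$. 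As this holds for all such $\alpha$, Theorem~\ref{5.2.1BBD} gives that $\E '$ is $\iota$-mixed of weight $\geq w$.

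For $\E ''$, keep the same notation. The long exact sequence of $\H ^{\bullet} p _{U+} (\alpha ^{+}(-))$ yields a short exact sequence $0 \to Q \to \H ^{0} p _{U+} (\alpha ^{+} \E '') \to C \to 0$ in which $Q$ is the image of $\H ^{0} p _{U+} (\alpha ^{+} \E)$, hence $\iota$-mixed of weight $\geq w$, and $C$ is a submodule of $\H ^{1} p _{U+} (\alpha ^{+} \E ')$. By the previous paragraph $\E '$ is $\iota$-mixed of weight $\geq w$, so $p _{U+} (\alpha ^{+} \E ')$ is an $\iota$-mixed complex of weight $\geq w$ and $\H ^{1} p _{U+} (\alpha ^{+} \E ')$, hence $C$, is $\iota$-mixed of weight $\geq w+1$. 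An extension of a module of weight $\geq w$ by one of weight $\geq w+1$ is of weight $\geq w$, so $\H ^{0} p _{U+} (\alpha ^{+} \E '')$ is $\iota$-mixed of weight $\geq w$ for every such $\alpha$, and Theorem~\ref{5.2.1BBD} gives that $\E ''$ is $\iota$-mixed of weight $\geq w$. Applying $\DD _{X}$ then settles the case of weight $\leq w$, and the corollary follows.

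As the excerpt anticipates, there is no genuinely new ingredient: everything rests on the stability of weights under $\alpha ^{+}$ and $p _{U+}$, on Artin vanishing, and above all on Theorem~\ref{5.2.1BBD}. The only point requiring a little care is the treatment of $\E ''$, where the natural map $\H ^{0} p _{U+} (\alpha ^{+} \E) \to \H ^{0} p _{U+} (\alpha ^{+} \E '')$ need not be surjective, so one must control its cokernel through the $\H ^{1}$-term and bound the latter's weights from below by invoking the already-established case of $\E '$.
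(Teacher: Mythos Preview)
Your proposal is correct and follows precisely the route the paper indicates: the paper gives no explicit proof of this corollary but simply directs the reader to translate \cite[5.3.1]{BBD} using Theorem~\ref{5.2.1BBD}, and your argument is a faithful and complete rendering of that translation. The reduction via $\DD_X$, the use of affine vanishing to get the injection $H^0 p_{U+}(\alpha^+\E') \hookrightarrow H^0 p_{U+}(\alpha^+\E)$, and the handling of $\E''$ through the $H^1$-term (bounded via the already-established case of $\E'$) are exactly the steps of the BBD proof, and the pointwise weight inheritance you invoke over $\Spec k$ is indeed trivial there since weights are read off Frobenius eigenvalues.
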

Finally, except 
\cite[5.4.7--8]{BBD}, the reader can check easily the other results of 
the chapter $5$ of \cite{BBD} by translating the proofs in our $p$-adic context.

\small
\bibliographystyle{alpha}
\def\cprime{$'$}

\bigskip
\noindent Daniel Caro\\
Laboratoire de Math\'ematiques Nicolas Oresme\\
Universit\'e de Caen
Campus 2\\
14032 Caen Cedex\\
France.\\
email: daniel.caro@unicaen.fr

\end{document}